\newcommand{\margnote}[1]{
\ifthenelse{\boolean{shownotes}}%
{\marginpar{\raggedright\tiny\texttt{#1}}}%
{}%
}
\newcommand{\hole}[1]{
\ifthenelse{\boolean{shownotes}}%
{\begin{center} \fbox{ \rule {.25cm}{0cm}
\rule[-.1cm]{0cm}{.4cm} \parbox{.85\textwidth}{\begin{center}
\texttt{#1}\end{center}} \rule {.25cm}{0cm}}\end{center}}
{}
}
\newtheorem{thm}{Theorem}[section]
\newtheorem{lem}[thm]{Lemma}
\newtheorem{rem}[thm]{Remark}
\newtheorem{defn}[thm]{Definition}
\newcommand{\e}{\varepsilon}		       
\newcommand{\R}{\mathbb{R}}
\newcommand{\T}{\mathbb{T}}
\newcommand{\N}{\mathbb{N}}
\newcommand{\Z}{\mathbb{Z}}
\newcommand{\dive}{\mathop{\mathrm {div}}}
\newcommand{\curl}{\mathop{\mathrm {curl}}}
\newcommand{\de}{\mathrm{d}}
\numberwithin{equation}{section}
\subjclass[35A]{35Q35, 76E25, 76W05}
\keywords{Magnetic Reconnection, Alfvén’s theorem, Beltrami fields, Taylor vortices.}
\begin{document}

\title[Magnetic reconnection for MHD]{Magnetic reconnection in Magnetohydrodynamics}

\author[P. Caro]{Pedro Caro}
\address[P.Caro]{BCAM - Basque Center for Applied Mathematics, Alameda de Mazarredo 14, E48009 Bilbao, Basque Country - Spain and Ikerbasque, Basque Foundation
for Science, 48011 Bilbao, Basque Country - Spain.}
\email[]{\href{pcaro@}{pcaro@bcamath.org}}

\author[G. Ciampa]{Gennaro Ciampa}
\address[G.\ Ciampa]{Dipartimento di Matematica "Federigo Enriques", Universit\`a degli Studi di Milano, Via Cesare Saldini 50, 20133 Milano, Italy.}
\email[]{\href{gciampa@}{gennaro.ciampa@unimi.it}}

\author[R. Luc\`a]{Renato Luc\`a}
\address[R. Luc\`a]{BCAM - Basque Center for Applied Mathematics, Alameda de Mazarredo 14, E48009 Bilbao, Basque Country - Spain and Ikerbasque, Basque Foundation
for Science, 48011 Bilbao, Basque Country - Spain.}
\email[]{\href{rluca@}{rluca@bcamath.org}}

\begin{abstract}
We provide examples of periodic solutions (in both 2 and 3 dimension) of the Magnetohydrodynamics equations such that the topology of the magnetic lines changes during the evolution. This phenomenon, known as magnetic reconnection, is 
relevant for physicists, in particular in the study of highly conducting plasmas. Although numerical and experimental evidences exist, analytical examples of magnetic reconnection were not known.
\end{abstract}

\maketitle

\section{Introduction}
We are interested in the Magnetohydrodynamics system, i.e.
\begin{equation}\label{eq:mhd}\tag{MHD}
\begin{cases}
\partial_t u+(u\cdot \nabla)u+\nabla P=\nu\Delta u+(b\cdot \nabla)b,\\
\partial_t b+(u\cdot \nabla)b=(b\cdot \nabla)u+\eta\Delta b,\\
\dive u=\dive b=0,\\
u(0,\cdot)=u_0,\hspace{0.3cm} b(0,\cdot)=b_0,
\end{cases}
\end{equation}
where (for $d=2,3$) $b:(0,T)\times\T^{d} \to \R^{d}$ identifies the magnetic field in a resistive incompressible 
fluid with velocity $u: (0,T)\times\T^{d} \to \R^{d}$. The scalar quantity $P:(0,T)\times\T^d \to \R$ is the 
 pressure, $\nu\geq 0$ is the viscosity and $\eta\geq 0$ is the resistivity. The system \eqref{eq:mhd} describes the behaviour of an electrically conducting incompressible fluid, the equations are given by a combination of the Navier--Stokes equations and Maxwell's equations from electromagnetisms.\\

In the resistive and viscous case (i.e. $\eta>0,\nu>0$), the existence of global weak solutions with finite energy and local strong solutions to \eqref{eq:mhd} in two and three dimensions have been proved in~\cite{DL72}. Moreover, for smooth initial data they proved the smoothness and uniqueness of their global weak solutions in the two-dimensional case. On the other hand, in \cite{ST} the authors proved the uniqueness of the local strong solutions in 3D, together with some regularity criteria. This situation is somewhat reminiscent of the available results for the Navier--Stokes equations. A similar situation arises in the non-viscous case, i.e. when $\nu=0$: In 2D the existence 
of global weak solutions has been proved in \cite{K} for divergence-free initial data in $L^2$. In 3D, similarly to the viscous case, local smooth solutions exist and are unique; moreover, if $\nabla u\in L^1L^\infty$ the solution is also global. 
The ideal case $\eta =0$ has attracted the attention of many mathematicians in recent years and local
well-posedness results, at an (essentially) sharp level of Sobolev regularity, are now available 
\cite{FMRR, Feff}. See also \cite{LXZ, LZ, RWXZ, XZ}, for global existence results of smooth solutions in the ideal case assuming that the initial datum is a small perturbation of a constant steady state.

We are interested in the problem of \emph{magnetic reconnection}. In the non-resistive case ($\eta=0$) it is known that the integral lines of a sufficiently smooth magnetic field are transported by the fluid ({\em Alfven's theorem}). In particular 
the topology of the integral lines of the magnetic field does not change under the evolution. 
The topological stability of the magnetic structure is related to the conservation of the magnetic helicity, which, in the non resistive case ($\eta = 0$), becomes a very subtle matter at low regularities, intimately related to anomalous dissipation phenomena.
We refer to \cite{BBV20,FL19,FLS20}  for some very interesting (positive and negative) results in this direction.
On the other hand, in the resistive case ($\eta > 0$) the topology of the magnetic lines 
may (and it is indeed expected to) change under the fluid evolution, in both 2 and 3 dimensions, even for regular solutions. This phenomenon, known as magnetic reconnection, is of particular relevance for physicists, in particular in the study of highly conducting plasmas.
A possible explanation of the phenomenon of the solar flares, large releases of energy from the surface of the sun, involves magnetic reconnection. The energy stored in the magnetic fields over a large period of time is rapidly released during the change of topology of the magnetic lines. It is also worth mentioning that the intensity of the solar flares is of a larger magnitude than the one predicted by the current \eqref{eq:mhd} models, suggesting that also some turbulent phenomenon, as cascade of energy, may be involved \cite{Priest}. Although numerical and experimental evidences exist (see \cite{MagnPhys, Priest} and the references therein), no analytical examples of magnetic reconnection are known.
Besides the intrinsic mathematical interest, a better understanding by a rigorous analytic viewpoint 
may give important insights on the Sweet-Parker model arising in magnetic reconnection theory \cite{Priest}.
\\
\\
Thus, in this work we are concerned with providing analytical examples of this phenomenon. Our main result is the following.
\begin{thm}\label{thm:main}
Consider $d\in\{2,3\}$. Given any viscosity and resistivity $\nu, \eta >0$ and any constants $T>0$ and $M>0$ there exists a zero-average unique global smooth solution $(u,b)$ of \eqref{eq:mhd} on $\T^d$, with initial datum $(0, b_0)$ and $\|b_0\|_{L^2}=M$, such that the magnetic lines at time $t=0$ and $t=T$ are not topologically equivalent, meaning that there is no homeomorphism of $\T^d$ into itself mapping the 
magnetic lines of $b(0,\cdot)$ into that of $b(T,\cdot)$.
\end{thm}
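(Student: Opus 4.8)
The plan is to sidestep the fully coupled dynamics by producing an \emph{exact} solution in which the velocity remains identically zero, so that the magnetic field evolves by pure heat flow and its topology can be read off a closed formula; the viscosity $\nu$ then plays no role whatsoever. The mechanism is that a unidirectional field transported along its own direction has vanishing self-advection. In $d=2$ I would take, on $\T^2=(\R/2\pi\Z)^2$,
\[
b(t,x,y)=c\,\bigl(0,\beta(t,x)\bigr),\qquad u\equiv 0,\qquad \nabla P\equiv 0,
\]
with $\beta$ independent of $y$. Then $\dive b=\partial_y\beta=0$ and $(b\cdot\nabla)b=\beta\,\partial_y b=0$, so the Lorentz force vanishes and the momentum equation holds with constant pressure, while the induction equation collapses to the one-dimensional heat equation $\partial_t\beta=\eta\,\partial_x^2\beta$. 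Taking $\beta(0,x)=\sin x+A\sin(2x)$ yields the closed form $\beta(t,x)=e^{-\eta t}\sin x+A\,e^{-4\eta t}\sin(2x)$, and $c$ is then fixed by $\|b(0,\cdot)\|_{L^2}=M$; since $b$ is linear in $c$ and $u\equiv 0$, this rescaling affects neither the equations nor the homeomorphism type of the integral lines. The pair $(u,b)$ is smooth, global and of zero average, and the local uniqueness of smooth solutions recalled in the introduction identifies it as the unique global smooth solution with datum $(0,b_0)$.

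I would then extract the change of topology from the differential decay of the two modes, whose Laplace eigenvalues $1$ and $4$ differ. Factoring $\beta(t,x)=e^{-\eta t}\sin x\,\bigl(1+2A\,e^{-3\eta t}\cos x\bigr)$, the nodal set $\{x:\beta(t,x)=0\}$ always contains $\{0,\pi\}$ and contains the two further roots of $\cos x=-e^{3\eta t}/(2A)$ exactly when $e^{3\eta t}\le 2|A|$. Choosing $A$ in the nonempty range $1<2|A|<e^{3\eta T}$, at $t=0$ there are four simple nodal values and at $t=T$ only two; one checks directly that all nodal values are simple at these two times, the degeneracy occurring only at the intermediate time $t^\ast=\tfrac{1}{3\eta}\log(2|A|)$. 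Since $b$ vanishes on the entire vertical circle above each nodal value, the singular set $\{b(t,\cdot)=0\}$ is a disjoint union of four circles at $t=0$ and of two circles at $t=T$.

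The non-equivalence is now a soft argument. A homeomorphism of $\T^2$ carrying the magnetic lines of $b(0,\cdot)$ onto those of $b(T,\cdot)$ must send singular orbits (the points where $b$ vanishes) to singular orbits, hence restrict to a homeomorphism between the two singular sets; but four disjoint circles are not homeomorphic to two, a contradiction. For $d=3$ the same fields, taken constant in the third variable on $\T^3$, solve \eqref{eq:mhd} by the identical computation, and their singular sets are disjoint unions of flat two-tori, again four versus two components, so the argument carries over verbatim.

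The step I expect to require the most care is not the construction but isolating a genuinely homeomorphism-invariant quantity and controlling its robustness on all of $[0,T]$ — concretely, verifying the simplicity of all nodal values at $t\in\{0,T\}$ and steering clear of $t^\ast$. I would also flag that this unidirectional example, while sufficient for the statement, is topologically degenerate; to reproduce the physical picture of merging magnetic islands in $2$D, or of field lines changing their linking in $3$D, one must replace $\beta$ by a genuinely two- or three-dimensional superposition of Beltrami or Taylor eigenfields. There the self-advection no longer vanishes, a nontrivial velocity $u\not\equiv 0$ is generated from rest, and the main obstacle shifts to tracking and controlling $u$ well enough to guarantee that the induced drift does not undo the topological transition forced by the differential resistive decay.
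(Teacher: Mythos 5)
Your construction is correct, and it proves the theorem as literally stated by a route that is genuinely different from (and far more elementary than) the one in the paper. You exploit a unidirectional ansatz $b=c\,(0,\beta(t,x_1))$, $u\equiv 0$, for which $(b\cdot\nabla)b=\beta\,\partial_{x_2}b=0$, so the system \eqref{eq:mhd} decouples exactly and $\beta$ obeys the one\-/dimensional heat equation; the differential decay $e^{-\eta t}$ versus $e^{-4\eta t}$ of the two Fourier modes then changes the number of roots of $\beta(t,\cdot)$ from four to two on $(0,T)$, and since a homeomorphism carrying orbits onto orbits must carry singleton orbits onto singleton orbits, it would restrict to a homeomorphism between the zero sets, which have four versus two connected components --- a contradiction. (The one convention you rely on, that ``mapping magnetic lines into magnetic lines'' means mapping orbits onto orbits so that zeros go to zeros, is exactly the convention used in the paper's own definition of structural stability and in its 2D stagnation\-/point count, so this is not a gap.) By contrast, the paper never gets to switch off the nonlinearity: it takes $b_0$ to be a superposition $MB_0+\delta B_1$ (resp.\ $\frac MN V_N+\delta V_1$) of Beltrami (resp.\ Taylor) eigenfields, generates a nontrivial velocity from rest, and spends Sections \ref{Sec:3dStab}--\ref{Sec:2dStab} on quantitative stability estimates to show the true solution stays close to the linear heat evolution at $t=0$ and $t=T$. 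What that extra work buys is precisely what your example lacks: the paper's critical points are non\-/degenerate and its topological invariants are robust under $C^1$ (or $C^{3,\alpha}$) perturbations, so the reconnection is \emph{structurally stable} in the sense of Remark \ref{Rem:StructStable} (hence observable, and compatible with nonzero or even large initial velocities), whereas your field has whole circles of degenerate zeros that an arbitrarily small perturbation of $(u_0,b_0)$ destroys, and your $d=3$ case is exactly the trivial extension \eqref{AsInHere} that the paper singles out in Remark \ref{Rem:FailsSStab} as failing structural stability; the genuine 3D argument also lets one prescribe knotted and linked magnetic lines, which is invisible to a unidirectional field. You flag this degeneracy yourself, correctly: your proof settles the statement as written, while the paper's machinery is what upgrades it to a physically meaningful, perturbation\-/proof reconnection scenario.
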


Note that as $M$ may be very large and the solutions have zero-average, we are considering genuinely large initial magnetic fields (for instance large in any Sobolev space). It is also possible to consider more general initial velocities $u_0 \neq 0$. In particular we may consider large initial  
velocity if we impose some a priori structure (see Remark \ref{Remark:LargeDataWithStructure1}), 
however we prefer to state the result in the simplest form. Notably, large velocities (without any specific geometrical structure) may be also considered in the 2D case 
if we work with very large viscosities (see Theorem \ref{thm:main2}).

The proofs of Theorem \ref{thm:main} in 2D and in 3D are logically independent. For the sake of readability, we first present the proof of the theorem in 3D, which is and adaptation of the argument from~\cite{ELP}. The proof of the result in 2D requires new ideas and it is somehow more general, as it should be clear by the following observation: if we have found a 2D solution $$(b_1(t,x_1, x_2), b_2(t,x_1, x_2))$$ of \eqref{eq:mhd}  which exhibits a reconnection, then we have  also proved magnetic reconnection for \eqref{eq:mhd} in 3D, simply considering the 3D solution 
\begin{equation}\label{AsInHere}
(b_1(t,x_1, x_2), b_2(t,x_1, x_2), 0);
\end{equation}
the 2D velocity and pressure must be extended to 3D in the analogous way.
 On the other hand, an advantage of the genuinely 3D argument will be the possibility to (additionally) prescribe rich 
 topological structures for the magnetic lines, relying upon some deep results about topological richness of Beltrami fields \cite{Annals, Acta, EPT}. Moreover, the 3D reconnection obtained extending the 2D result to 3D as in \eqref{AsInHere} would not be
 {\it structurally stable} in the sense of the Remark \ref{Rem:StructStable} below. We explain why in
 Remark \ref{Rem:FailsSStab}.

It is worth mentioning already that the 2D result and the (genuine) 3D result that we presented are 
structurally stable indeed; see again Remark \ref{Rem:StructStable}.

As we have noted, the condition $\eta >0$ is necessary to prove magnetic reconnection for smooth solutions, 
because for $\eta=0$ this is forbidden by Alfven's theorem. The heuristic behind the phenomenon is 
that the resistivity allows to break the topological rigidity. In this sense, it is interesting that we can prove magnetic reconnection at arbitrarily small resistivity, namely for all $\eta >0$, thus even in a very turbulent regime. 

Regarding the role played by the viscosity, we mention that Theorem \ref{thm:main} might be extended to the 
case $\nu =0$, working with growth and stability estimates like the ones used in the Euler equations theory, rather than in the Navier--Stokes one. The price that we must pay to work with 
viscosity~$\nu =0$ is of course that we will only have local in time results. 
%

However, the viscosity may enter in the argument in an interesting way, since it is valid the 
principle that large viscosity helps too. We will investigate this in the 2D case, where,
 we can prove a stronger reconnection statement which is valid for any initial velocity 
in~$H^4(\T^2)$ as long as we work with sufficiently 
large viscosity.

\begin{thm}\label{thm:main2}
Consider $d=2$. Given any resistivity $\eta >0$ and any constants $T>0$ and $M, R>0$, there exists a viscosity
$\nu = \nu(M, R) \gg 1$ sufficiently large such that the following holds: For any zero-average 
$u_0$ with $\| u_0 \|_{H^4} = R$, there exists a 
zero-average unique global smooth solution $(u,b)$ of \eqref{eq:mhd} on $\T^2$, with initial datum $(u_0, b_0)$ 
and $\|b_0\|_{L^2}=M$, such that the magnetic lines at time $t=0$ and $t=T$ are not topologically equivalent.
\end{thm}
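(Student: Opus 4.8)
The plan is to exploit the large viscosity to show that, up to a small error, the magnetic field is driven by the \emph{pure heat flow} of its initial datum, and then to invoke the structural stability of the reconnection already produced in the case $u_0=0$. Concretely, I would keep the very same initial magnetic field $b_0$ (with $\|b_0\|_{L^2}=M$) and the reference evolution $\bar b(t):=e^{\eta t\Delta}b_0$ used in the 2D proof of Theorem \ref{thm:main}, so that $\bar b(0)=b_0$ and $\bar b(T)$ have non-equivalent magnetic-line topologies, and this non-equivalence is stable under $C^1$-small (equivalently $H^3(\T^2)$-small, since $H^3\hookrightarrow C^1$) perturbations of the field; see Remark \ref{Rem:StructStable}. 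Global smoothness of $(u,b)$ for the datum $(u_0,b_0)$ is guaranteed by the two-dimensional theory of \cite{DL72}. The whole matter then reduces to proving that, for $\nu=\nu(M,R)$ large enough, $\sup_{t\in[0,T]}\|b(t)-\bar b(t)\|_{H^3}$ can be made arbitrarily small; since $b(0)=b_0=\bar b(0)$ exactly, structural stability forces $b(0)$ and $b(T)$ to be topologically non-equivalent.

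The heart of the argument is a set of viscosity-quantitative energy estimates showing that $u$ is \emph{integrally small in time}. From the energy equality $\tfrac12\tfrac{\de}{\de t}\bigl(\|u\|_{L^2}^2+\|b\|_{L^2}^2\bigr)=-\nu\|\nabla u\|_{L^2}^2-\eta\|\nabla b\|_{L^2}^2$ one gets $\int_0^T\|\nabla u\|_{L^2}^2\,\de t\le (R^2+M^2)/(2\nu)$, and the Poincaré inequality (zero average) upgrades this to $\int_0^T\|u\|_{L^2}\,\de t\lesssim \nu^{-1/2}$. I would then differentiate the $u$-equation, pair with $\partial^\alpha u$ for $|\alpha|\le 4$, and use the 2D algebra property of $H^4$, handling the forcing $(b\cdot\nabla)b$ and the top-order transport commutator by Young against the dissipation $\nu\|\nabla u\|_{H^4}^2$, to reach a differential inequality of the schematic form $\tfrac{\de}{\de t}\|u\|_{H^4}^2+c\nu\|u\|_{H^4}^2\le C\|u\|_{H^4}^3+C\nu^{-1}\|b\|_{H^4}^4$. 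Running this simultaneously with a bootstrap bound $\sup_{[0,T]}\|b(t)\|_{H^4}\le 2\|b_0\|_{H^4}$ (closed via Grönwall on the $b$-equation, whose growth exponent is exactly $C\int_0^T\|u\|_{H^4}\,\de t$), and using that the cubic self-interaction is dominated by the damping once $\nu\gg R$, yields $\|u(t)\|_{H^4}\lesssim e^{-c\nu t}R+\nu^{-1}\|b_0\|_{H^4}^2$ and hence $\int_0^T\|u(t)\|_{H^4}\,\de t\lesssim \nu^{-\alpha}$ for some $\alpha>0$.

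To close the comparison I would set $w:=b-\bar b$, which solves $\partial_t w-\eta\Delta w=-(u\cdot\nabla)b+(b\cdot\nabla)u=:F$ with $w(0)=0$, and estimate it by a \emph{plain} energy estimate at the level $H^3$ (no parabolic smoothing needed): since $H^3$ is an algebra and $b$ is bounded by the bootstrap, one has $\|F\|_{H^3}\lesssim\|u\|_{H^4}\|b\|_{H^4}\lesssim\|b_0\|_{H^4}\,\|u\|_{H^4}$, whence $\tfrac{\de}{\de t}\|w\|_{H^3}\le\|F\|_{H^3}$ integrates to $\sup_{[0,T]}\|w(t)\|_{H^3}\lesssim \|b_0\|_{H^4}\int_0^T\|u\|_{H^4}\,\de t\lesssim\nu^{-\alpha}$. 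Because $H^3(\T^2)\hookrightarrow C^1$, this is precisely $C^1$-closeness of $b$ to the reference $\bar b$; choosing $\nu=\nu(M,R)$ so that $\nu^{-\alpha}$ lies below the structural-stability threshold of $\bar b(T)$ completes the proof. \textbf{The main obstacle} I anticipate is the simultaneous, uniform-in-$\nu$ control of the high Sobolev norm of $b$ together with the time-integrated smallness of $u$: since $u_0$ has size $R$ (not small), one cannot rely on pointwise-in-time smallness of $u$ in the initial layer and must instead extract the whole $\nu^{-\alpha}$ gain from the time integral $\int_0^T\|u\|_{H^4}\,\de t$, carefully balancing the cubic self-interaction of $u$ against the viscous damping and ensuring that the Grönwall constant governing $\|b\|_{H^4}$ does not degenerate as $\nu\to\infty$.
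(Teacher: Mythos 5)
Your proposal is correct and follows essentially the same route as the paper: the same initial datum $b_0=\frac{M}{N}V_N+\delta V_1$, the same topological invariant (count of non-degenerate critical points, robust by Lemma \ref{Lemma:CriticalPoints} at $t=0$ and by the structural stability of $V_1$ at $t=T$), the decomposition of $b$ as the pure heat flow $e^{\eta t\Delta}b_0$ plus an error driven by $u$, and the key quantitative input that the viscous damping makes $\|u\|_{H^4}$ (hence $\int_0^T\|u\|_{H^4}\,\de t$) of size $O(\nu^{-1})$ uniformly in the bootstrap bound on $\|b\|_{H^4}$ — this is exactly the content of Theorems \ref{thm:boundhr} and \ref{Lem:HrDecay} combined with the Duhamel estimate in Section \ref{Sec:Proof2DLarge}. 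The only cosmetic difference is that you close the comparison by a direct energy estimate on $w=b-e^{\eta t\Delta}b_0$ rather than by the mild (Duhamel) formulation, which is equivalent.
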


\begin{rem}\label{Rem:StructStable}
The results that we presented above are {\it structurally stable}, in the sense that if we slightly perturb the initial data 
(in the appropriate norms) and/or the observations times~$t = 0$,~$t=T$, the results are still valid.  This will be clear 
by the proof (see also Remark \ref{Remark:LargeDataWithStructure1}). 
The structural stability of the 
phenomenon is important from a physical point of view, since it guarantees its observability. 
\end{rem}

The proofs are based on a perturbative analysis of some particular solutions of the linearized equation, for which one can infer reconnection by a suitable topological argument. More precisely, in the 3D case 
we closely follow the idea of \cite{ELP}, considering data for which the magnetic field at time $t=0$ has the form: 
$$
M B_0 + \delta B_1, \qquad M>0, \quad 0<  \delta \ll 1,
$$ 
where $B_j$ are high frequency Beltrami fields (eigenvector of the curl operator) with the following properties:
\begin{enumerate}
[$i)_{3D}$]
\item All the magnetic lines of $B_0$ wing around a certain direction of the torus, in particular they are all non contractible. This 
is a robust topological property, in the sense that it is still valid for all sufficiently small regular perturbations of $B_0$.
\item The field $B_1$ has some contractible magnetic line (in fact in a small ball we can prescribe magnetic lines knotted and linked in complicated ways following the topological results from \cite{ELP, Annals, Acta, EPT}). This is again topologically robust.
\end{enumerate}
The idea is then to choose the relevant parameters, namely $\delta$ and the eigenvalues (frequencies)
of the Beltrami fields in such a way that our solution will be sufficiently close (in a regular norm) to $MB_0$ at time 
$t=0$ and to a suitable rescaled version of the field $B_1$, at time $T >0$. 
Recalling the topological constraint $i)_{3D}, ii)_{3D}$ (and their robustness), this will ensure that the 
magnetic lines of the solution at time $t=0$ and $t=T$ are not homeomorphic. Thus 
we must have had magnetic reconnections in the intermediate times. 

If we try to use the same strategy in 2D we encounter the problem that it is not easy to produce 
a simple high frequency Taylor vector field (which is the 2D analogous of a Beltrami field) 
 with all non contractible vortex lines.
Thus,
in the 2D case we use a different topological constraint, that consists in counting the number of stagnation 
points of the magnetic field. In particular we  
define the initial magnetic field as
$$
M V_N + \delta V_1, \qquad M > 0, \quad 0 <  \delta \ll 1,
$$ 
where $V_N$ and $V_1$ are Taylor fields 
(eigenvectors of the Stokes problem \eqref{eq:eigen-stokes}) with the 
following properties:
\begin{enumerate}[$i)_{2D}$]
\item The field $V_N$ has several stagnation 
points (namely $\sim N \gg 1$, half of them are hyperbolic and half of them elliptic). 
This is robust in the sense that small regular perturbations must have at least as many stagnation points.
\item The fields $V_1$ has exactly four stagnation 
points, and the topology of the vortex lines is completely prescribed (see figure \ref{fig:v1}) and 
robust ($V_1$ is 
structurally stable by Theorem \ref{thm:MW}).
\end{enumerate}
Again, one will then choose the relevant parameters in such a way that the solution will share the same 
topological properties $i)_{2D}$ and $ii)_{2D}$ at times $t=0$ and $t=T > 0$, respectively. 
This proves magnetic reconnection for intermediate times.

Finally, in the last part of the paper we provide an example of initial magnetic fields which shows instantaneous reconnection under the \eqref{eq:mhd} flow. The theorem is the following:
\begin{thm}\label{thm:main3}
Consider $M>0$ and $u_0 \in C^{\infty}(\T^d)$ with zero-average, where $d \in \{2, 3\}$. There exists a zero-average
initial 
magnetic field  $b_0$ with $\| b_0 \|_{L^2} = M$ such that the following holds: If $d=3$
there is a zero-average local smooth solution of \eqref{eq:mhd} with initial datum $(u_0, b_0)$ 
which at time $t=0$ has a tube of magnetic lines that breaks up instantaneously (namely for any positive time $t >0$).
If $d=2$
there is a zero-average global smooth solution with initial datum $(u_0, b_0)$ 
which at time $t=0$ has an heteroclinic connection that breaks up instantaneously.   
\end{thm}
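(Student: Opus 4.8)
The plan is to treat both statements as \emph{first-order in time} perturbation results and to exploit the following unifying principle: by Alfv\'en's theorem the ideal part of the induction equation, $\partial_t b = \curl(u\times b)$, merely transports the magnetic lines by the (volume-preserving) flow of $u$ and therefore preserves their topology, so that in the resistive regime ($\eta>0$) any topological change must be driven \emph{entirely} by the diffusive term $\eta\Delta b$. This has the crucial consequence that the prescribed velocity $u_0$ drops out of the leading-order change of the relevant topological invariant, so that for an \emph{arbitrary} given $u_0$ we may engineer the instability solely through the choice of $b_0$ (equivalently, of $\Delta b_0$). I would therefore pick $b_0$ carrying a topological feature that is \emph{non-generic} — a saddle connection in 2D, a coincidence of stable and unstable invariant tubes in 3D — and show that the resistive evolution moves $b$ transversally to the set of fields possessing that feature, so it is destroyed for every small $t>0$.

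In 2D I would write $b=\nabla^\perp\psi=(-\partial_{x_2}\psi,\partial_{x_1}\psi)$ and use the standard reduction of the induction equation to the scalar advection--diffusion equation $\partial_t\psi + u\cdot\nabla\psi = \eta\Delta\psi$ (modulo a spatially constant gauge); the magnetic lines are the level sets of $\psi$, a heteroclinic connection is a pair of nondegenerate saddles $p_1,p_2$ of $\psi$ at a common level joined by a level arc, and $\Delta\psi=\curl b$ is the current. Since the saddles are nondegenerate they persist and move smoothly, say $p_i(t)$, by the implicit function theorem, and the critical values $c_i(t)=\psi(t,p_i(t))$ satisfy
\[
\frac{d}{dt}c_i\Big|_{t=0} = \partial_t\psi(0,p_i^0) = \eta\,\Delta\psi_0(p_i^0),
\]
because $\nabla\psi(0,p_i^0)=0$ kills the advection term regardless of $u_0$. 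Hence $\tfrac{d}{dt}(c_1-c_2)|_{t=0}=\eta\big(\Delta\psi_0(p_1^0)-\Delta\psi_0(p_2^0)\big)$, and it suffices to construct $\psi_0$ whose two saddles sit at a common level, are joined by a heteroclinic orbit, yet carry \emph{distinct} Laplacians. Starting from $\cos x_1+\cos x_2$, whose saddles $(0,\pi),(\pi,0)$ lie at level $0$ and are connected by straight separatrices, I would add an even perturbation built from modes of different wavenumber (which decouples the value-difference from the Laplacian-difference at the two points) and enforce the single constraint $\psi_0(p_1)=\psi_0(p_2)$ by the implicit function theorem, thereby splitting the Laplacians while keeping the connection; finally one rescales to impose $\|b_0\|_{L^2}=M$ and zero average. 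Since $c_1(t)\neq c_2(t)$ for small $t>0$, the two saddles no longer lie on a common level set and the heteroclinic orbit cannot survive, giving the claimed instantaneous breaking for the globally smooth 2D solution.

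In 3D there is no global flux function, so I would replace the saddle connection by its dynamical analog: choose $b_0$ with a \emph{hyperbolic closed magnetic line} $\gamma_0$ whose two-dimensional stable and unstable manifolds coincide along an invariant tube (a homoclinic tube), which is exactly the non-generic feature to be destroyed. The hyperbolic orbit $\gamma_0$ is structurally stable, hence persists (slightly displaced) for the local smooth solution since $b(t)\to b_0$ in $C^1$ as $t\to0$; what must change is the coincidence of its invariant manifolds. Treating $t$ itself as the small parameter, the splitting distance between the two manifolds is, to leading order, a Melnikov-type integral along $\gamma_0$ of the transverse component of $\dot b_0=\curl(u_0\times b_0)+\eta\Delta b_0$. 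Here the unifying principle reappears: the ideal contribution $\curl(u_0\times b_0)$ corresponds to a coherent transport of the whole homoclinic tube and produces, at leading order, \emph{zero} net splitting, so the Melnikov function is proportional to $\eta$ times an integral of $\Delta b_0$; using the freedom in $b_0$ I would arrange this integral to be nonzero, so the manifolds split for every small $t>0$ and the tube breaks up.

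The main obstacle is the 3D case, for two reasons. First, one must rigorously separate genuine \emph{destruction} of the tube from a harmless \emph{deformation}; this is precisely what the persistence of the hyperbolic core together with the nonvanishing of the Melnikov splitting is designed to guarantee, but making the computation rigorous (Sobolev regularity of the local solution, smoothness of the invariant manifolds, and the leading-order expansion in $t$) requires care. Second, and most delicately, the Melnikov integral must be shown to be nonzero \emph{for the given, arbitrary} $u_0$: although the ideal part contributes nothing to the splitting at leading order, one must still verify that the resistive integral of $\Delta b_0$ does not accidentally vanish, and this is where the explicit construction of a divergence-free, zero-average field on $\T^3$ carrying a homoclinic tube with a prescribed transverse, non-degenerate profile of $\Delta b_0$ along $\gamma_0$ is essential. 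Once such a $b_0$ is built and normalized to $\|b_0\|_{L^2}=M$, the local smooth solution furnished by the 3D well-posedness theory completes the argument.
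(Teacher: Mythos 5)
Your 2D argument is, at its core, the same as the paper's: pass to the stream function $\psi$ satisfying $\partial_t\psi+u\cdot\nabla\psi=\eta\Delta\psi$ (up to a spatial constant), observe that the critical values of the two saddles evolve at $t=0$ with speed $\eta\Delta\psi_0(p_i)$ because the advection term dies at critical points, and break the connection by making these two Laplacians differ. (Your direct justification via $\nabla\psi_0(p_i)=0$ is in fact cleaner than the paper's, which argues the cancellation of the transport term by comparison with the $\eta=0$ case and Alfv\'en's theorem.) The gap is in the construction of $\psi_0$: you impose only the single scalar constraint $\psi_0(p_1)=\psi_0(p_2)$ by the implicit function theorem, but equality of the critical values is necessary, not sufficient, for the existence of a heteroclinic orbit actually joining $p_1$ to $p_2$ --- after your perturbation of $\cos x_1+\cos x_2$ the separatrix emanating from $p_1$ could just as well return to $p_1$ as a homoclinic loop, and then there is nothing for your value-splitting argument to break. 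You need to exhibit the connecting orbit. The paper sidesteps this entirely by taking $b_0^\e=\Phi^*(-\mathcal V_{11}^4)$ for the explicit volume-preserving shear $\Phi(x_1,x_2)=(x_1,x_2-\e x_1)$: the phase portrait at $t=0$ is then diffeomorphic to that of a Taylor vortex, the heteroclinic segment $x_2=\e x_1+\pi/2$ is manifestly an orbit, and the two Laplacians $-2\e$ and $2\e\cos(\e\pi)$ are computed explicitly. Your construction can very likely be repaired (e.g.\ by using such a pullback, or a symmetry forcing the separatrix to stay on a prescribed line), but as written the existence of the heteroclinic connection for the perturbed datum is unproved.

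In 3D you propose a genuinely different geometric scenario from the paper: a hyperbolic closed magnetic line with coincident stable/unstable tubes, to be split by a homoclinic Melnikov function, whereas the paper (following the vortex-reconnection argument of \cite{ELP}) takes $b_0$ to be the pullback of the integrable Beltrami field $M(\sin x_3,\cos x_3,0)$ under $(x_1,x_2,x_3)\mapsto(x_1,x_2,x_3+\e\cos(px_1-qx_2))$ and destroys a \emph{resonant invariant torus} using the subharmonic Melnikov theory of Guckenheimer--Holmes. Your unifying principle --- that the ideal term $\curl(u_0\times b_0)$ contributes nothing to the splitting because for $\eta=0$ the lines are transported by a diffeomorphism, so only $\eta\Delta b_0$ enters the Melnikov function --- is exactly the paper's key observation and is sound. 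However, the step you yourself flag as ``essential'' is missing: you never produce a divergence-free, zero-average field on $\T^3$ that (a) possesses a hyperbolic closed orbit with a two-dimensional homoclinic tube and (b) admits an explicitly computable, nonvanishing Melnikov integral of $\Delta b_0$ along the homoclinic orbits. This construction is the entire content of the 3D proof, and it is not obvious: generic explicit divergence-free fields on $\T^3$ do not come with such degenerate homoclinic structures, which is precisely why the paper works instead with a perturbed integrable field, where the invariant tori, the resonant ones among them, and the associated Melnikov function are all explicit. As it stands, the 3D part is a program rather than a proof.
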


The 3D case of the theorem above closely follows the idea of \cite{ELP}: by considering an initial datum which is 
not structurally stable (again the datum will be a small perturbation of a Beltrami field), one can prove that some 
magnetic lines sitting on a resonant (embedded 2D) torus rearrange 
instantaneously their topology. In the 2D case we exploit the structural instability of the heteroclinic connections 
of a suitable perturbation of a Taylor field. In particular, we show that an heteroclinic connection is instantly broken. 
Both results may be proved invoking the Melnikov theory, however in 2D a significant 
simplification of the argument is available (see Section \ref{Sec:Instant}). We are grateful to Daniel Peralta-Salas for this observation. 

\subsection{Organization of the paper}
In the rest of the introduction we describe the notation and some general facts that we will be using throughout the article. In  the sections \ref{Sec:BeltramiTaylor} and \ref{Sec:BeltramiTaylorBis}, we recall the concepts of Beltrami and Taylor fields that will be necessary in the proof of the magnetic reconnection for 3D and 2D, respectively. One basic ingredient in our proofs is the stability of {\it strong} solutions of the MHD system. The sections \ref{Sec:3dStab} and \ref{Sec:2dStab} are devoted to this matter in the 3D and 2D case, respectively. The magnetic reconnection in the 3D case is proved in the section \ref{Sec:Proof3D}. The magnetic reconnection in the 2D case is proved in the sections 
\ref{Sec:SmallVel} in the case of small velocities and in section \ref{Sec:Proof2DLarge} in the case of large velocities, but under an additional assumption on the size of the viscosity. 
Finally, the section \ref{Sec:Instant} is devoted to the instantaneous reconnection.

%
%

\subsection{Notations and preliminaries} Throughout the paper, we will denote by $C$ a positive constant whose value can change line by line. We will denote by $\T^d := \R^d/\Z^d$ the $d$-dimensional flat torus equipped with 
the Lebesgue measure $\mathscr{L}^d$. We will always work with 
$d\in \{2,3\}$. 
For a given positive integer $m$ and a given $d$-dimensional vector field $w:\T^d\to\R^d$, we define
$$
|\nabla^m w|^2:= \sum_{|\alpha|=m} |\partial^\alpha w|^2,
$$
where $\alpha \in \N^d$ is a multi-index, and
$$
\|w\|^2_{H^r}:=\sum_{m=0}^r\int_{\T^d}|\nabla^m w(x)|^2 \de x,
$$
where $r$ is a given positive integer. We will use $p,q$ to denote real numbers in $[1,+\infty]$. 
We will adopt the customary notation for Lebesgue spaces $L^p(\T^d)$ and for Sobolev spaces $W^{k,p}(\T^d)$; in particular, $H^k(\T^d) := W^{k,2}(\T^d)$. We will denote with $\|\cdot\|_{L^p}$ (respectively $\|\cdot\|_{W^{k,p}}$,$\|\cdot\|_{H^k}$) the norms of the aforementioned functional spaces, omitting the domain dependence. Every definition below can be adapted in a standard way to the case of spaces involving time, like e.g. $L^1([0,T];L^p(\T^d))$. Moreover, for a time-dependent vector field $w(t,x)$, we define
$$
 \|w\|^2_{L^2W^{r,\infty}}:=\sum_{m=0}^r\int_0^\infty\|\nabla^m w(t,\cdot)\|_{\infty}^2 \de x.
$$

Working in the 2D case, we will frequently use the interpolation inequality 
\begin{align*}
\| f \|_{L^4(\T^2)} \leq  C 
\| f \|^{1/2}_{L^2(\T^2)} \| \nabla f \|^{1/2}_{L^2(\T^2)}  
\end{align*}
valid for zero-average functions, to which we refer as Ladyzenskaya's inequality.

In 3D a similar role will be played by 
\begin{equation*}
\| f \|_{L^\infty(\T^3)} \leq C 
\|  f \|^{1/2}_{L^6(\T^3)}
\|\nabla f \|^{1/2}_{L^2(\T^3)},
\end{equation*}
again valid for zero-average functions, to which we refer as Gagliardo-Nieremberg's inequality inequality. Finally, we recall the well-known Gronwall's lemma.

\begin{lem}[Gronwall]
Let $f$ be a nonnegative, absolutely continuous function on $[0,T]$, which satisfies for a.e. $t$ the differential inequality
$$
f'(t)\leq \alpha(t)f(t)+\beta(t),
$$
where $\alpha,\beta$ are nonnegative, summable functions on $[0,T]$. Then
$$
f(t)\leq e^{A(t)} \left(f(0)+\int_0^t\beta(s)e^{-A(s)}\de s\right),
$$
for all $t\in[0,T]$, where $A(t)=\int_0^t \alpha(s)\de s$.
\end{lem}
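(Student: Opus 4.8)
The plan is to use the integrating-factor trick. First I would introduce the auxiliary function $g(t) := e^{-A(t)} f(t)$, where $A(t) = \int_0^t \alpha(s)\, \de s$. Since $\alpha$ is summable, $A$ is absolutely continuous with $A'(t) = \alpha(t)$ for a.e.\ $t$ by the Lebesgue differentiation theorem; since $f$ is absolutely continuous by hypothesis and $e^{-A}$ is absolutely continuous (the composition of the locally Lipschitz map $x \mapsto e^{-x}$ with the absolutely continuous function $-A$, whose range is bounded on the compact interval $[0,T]$), the product $g$ is again absolutely continuous on $[0,T]$.

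Next I would differentiate. For a.e.\ $t$ the product rule gives
$$
g'(t) = e^{-A(t)}\bigl(f'(t) - \alpha(t) f(t)\bigr),
$$
and the assumed differential inequality $f'(t) \leq \alpha(t) f(t) + \beta(t)$ yields, after multiplying by the positive factor $e^{-A(t)}$,
$$
g'(t) \leq e^{-A(t)} \beta(t) \qquad \text{for a.e. } t \in [0,T].
$$

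Since $g$ is absolutely continuous, the fundamental theorem of calculus applies, and integrating the last inequality from $0$ to $t$ gives
$$
g(t) - g(0) = \int_0^t g'(s)\, \de s \leq \int_0^t e^{-A(s)} \beta(s)\, \de s.
$$
Because $A(0) = 0$ we have $g(0) = f(0)$ and $g(t) = e^{-A(t)} f(t)$; substituting and multiplying through by $e^{A(t)} > 0$ delivers exactly the claimed bound.

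The only real point requiring care — the ``hard part'', such as it is — is the measure-theoretic bookkeeping: one must invoke that $f$ and $A$ absolutely continuous implies $g$ absolutely continuous, so that the product rule holds a.e.\ and the fundamental theorem of calculus is available, rather than naively differentiating and integrating as if every function were smooth. Once that regularity is secured, the computation is entirely routine, and one sees in passing that the hypotheses of nonnegativity on $\alpha,\beta$ are used only to keep signs under control when the lemma is later applied with $f$ a (squared) norm.
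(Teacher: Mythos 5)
Your proof is correct: the integrating-factor argument with $g(t) = e^{-A(t)}f(t)$, together with the absolute-continuity bookkeeping needed to justify the a.e.\ product rule and the fundamental theorem of calculus, is exactly the standard proof of this lemma. The paper itself states Gronwall's lemma without proof (it is recalled as a well-known fact in the preliminaries), so there is nothing to compare against; your argument is complete and would serve as a proof if one were required.
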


\section{Beltrami fields}\label{Sec:BeltramiTaylor}

In this section we introduce the main mathematical objects that we need for the 3D magnetic reconnection result. 
These are the so-called Beltrami fields.

A vector field $B : \T^3\to\R^3$ is called a {\em Beltrami field} with frequency $N$ if it is an eigenfunction of the $\curl$ operator with eigenvalue $N \in \Z$, i.e.
\begin{equation}\label{eq:LambdaBeltrami}
\curl B =N B.
\end{equation}
It is in fact easy to check that on $\T^3$ the eigenvalues are have the form $|k|$ where $k \in \mathbb{Z}^3$.
We will restrict our attention to Beltrami fields of non-zero frequency, which are necessarily divergence-free and have zero mean, i.e.
$$
\dive B=0, \hspace{0.4cm}\int_{\T^3} B \, \de x = 0, \qquad \mbox{if $N \neq 0$}.
$$
The general form of a Beltrami field of frequency~$N$ is indeed 
\[
W=\sum_{|k|=\pm N}\Big(b_k \, \cos(k\cdot x)+\frac{b_k\times
  k} N\,\sin(k\cdot x)\Big)\,,
\]
where $b_k\in\R^3$ are vectors orthogonal to~$k$: $k\cdot b_{k}=0$. It is easy to check that $B$ satisfies additionally
\begin{align}
(B\cdot\nabla)B=\nabla\frac{|B|^2}{2},\\
\Delta B=-N^2B.
\end{align}
For our purposes we consider two topologically non equivalent Beltrami fields. The first is given explicitly  
\begin{align}
B_{0}:= (2\pi)^{-3/2}(\sin (N_0 x_3), \cos (N_0 x_3),0).\label{b0}
\end{align}
Note that all the integral lines of $B_0$ (which coincide with that of $\nabla \times B_{0}$) are either periodic or quasi-periodic (depending on the rationality/irrationality of the ratio $\frac{\sin (N_0 x_3)}{\cos (N_0 x_3)}$) and wind around the 2D tori given by the equation $x_3 = const$. In particular, all the integral lines of $B_{0}$ are non-contractible. This property is structurally stable, in the sense that it is still true for small (regular enough) perturbations.  More precisely, if   
\begin{equation}\label{quantstab2}
\| B_{0} -   B'\|_{C^{3,\alpha}}<\eta
\end{equation}
with a small ($N_0$-independent) constant, then 
\begin{enumerate}[(i)]
\item $B'$ does not have any contractible integral line (since the same is true for $B_{0}$).
\end{enumerate}
This fact, which is a KAM type theorem, was proved in Lemma 4.2 of \cite{ELP}.

The second Beltrami field 
is constructed in the following theorem, that is a simplified version of
Theorem 2.1 in \cite{ELP}. It is worth mentioning that 
the most important part of the proof of this result comes from \cite{Annals, Acta, EPT}. 

\begin{thm}[see Theorem 2.1 in \cite{ELP}]\label{Thmdef:BN1}
Let $S$ be a finite union of closed curves (disjoint, but possibly knotted and linked) in $\mathbb{T}^3$  that is contained in the unit ball. For all $N_1$ large enough and odd there exists a Beltrami field $B_1$  with some integral lines diffeomorphic to $S$ (namely related by a diffeomorphism of $\mathbb{T}^3$). This set is contained in a ball of radius $1/N_1$ and structurally stable, namely there exists  $\eta$ independent of $N_1$ such that any $B''$  such that 
\begin{equation}\label{quantstab}
\| B_{1} -   B'' \|_{C^{1}}<\eta
\end{equation}
has a collection of integral lines diffeomorphic to $S$.
Moreover
\begin{equation}\label{B-norm}
\frac1{C N_1}< \| B_{1}\|_{L^2}< \frac C{\sqrt N_1}.
\end{equation}
\end{thm}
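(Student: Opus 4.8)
\emph{Plan.} The statement is a streamlined form of~\cite[Theorem 2.1]{ELP}, whose core is the realization theory for Beltrami fields of~\cite{Annals, Acta, EPT}; accordingly I would assemble those results and only track the two features specific to the present formulation, namely that we require a mere diffeomorphic copy of~$S$ and that $N_1$ be odd. The argument splits into a local realization on~$\R^3$, a globalization on~$\T^3$ by a Runge-type theorem, and a rescaling that simultaneously places the structure in a ball of radius~$1/N_1$ and fixes the eigenvalue.

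\emph{Local realization.} First I would invoke the deep fact from~\cite{Annals, Acta, EPT} that for any finite link~$S$ there is a field~$\widetilde W$ on~$\R^3$ with $\curl \widetilde W = \widetilde W$ having a collection of integral lines diffeomorphic to~$S$, realized (say) as a union of hyperbolic periodic orbits, so that hyperbolicity furnishes a fixed structural-stability threshold $\eta_0 = \eta_0(S) > 0$: every field that is $\eta_0$-close to~$\widetilde W$ in~$C^1$ on a neighbourhood of~$S$ still carries a diffeomorphic copy of~$S$. This is the hard input and I would use it as a black box; producing a genuine curl-eigenfield (not merely a divergence-free field) with prescribed link topology is exactly the difficult point handled in those references.

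\emph{Globalization and rescaling.} Next I would record that the eigenvalues of $\curl$ on~$\T^3$ are the numbers $|k|$ with $k \in \Z^3$, so $N_1$ is admissible precisely when $N_1^2$ is a sum of three squares; for $N_1$ odd one has $N_1^2 \equiv 1 \pmod 8$, hence $N_1^2$ is never of the forbidden form $4^a(8b+7)$ and $N_1$ is always admissible, with the number of lattice points on the sphere $|k| = N_1$ tending to infinity. This is the role of the oddness hypothesis, and the many such modes provide the freedom needed for the Runge-type approximation theorem for~$\curl$: working in a small ball $B(x_0, C/N_1) \subset \T^3$ and rescaling by $y = N_1(x - x_0)$ turns a frequency-$N_1$ field into a frequency-$1$ field on $B(0,C)$, so one can choose a global Beltrami field $B_1$ of frequency~$N_1$ and a centre~$x_0$ whose rescaling $\widetilde B_1(y) := B_1(x_0 + y/N_1)$ approximates~$\widetilde W$ in $C^1(B(0,C))$ to within $\eta_0/2$. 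By the structural stability of~$\widetilde W$, the field $B_1$ then carries a diffeomorphic copy of~$S$ inside $B(x_0, C/N_1)$, a ball of radius $\sim 1/N_1$.

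\emph{Uniform stability and the norm bound.} The delicate point to check is that the stability threshold~$\eta$ for~$B_1$ can be taken independent of~$N_1$. If $\|B'' - B_1\|_{C^1(\T^3)} < \eta$, then after rescaling $\widetilde B''(y) := B''(x_0 + y/N_1)$ satisfies $\|\widetilde B'' - \widetilde B_1\|_{C^0(B(0,C))} \le \eta$ while the first derivatives acquire a favourable factor, $\|\nabla_y(\widetilde B'' - \widetilde B_1)\|_{C^0} = N_1^{-1}\|\nabla_x(B'' - B_1)\|_{C^0} \le \eta$, so $\|\widetilde B'' - \widetilde B_1\|_{C^1(B(0,C))} \le \eta$ uniformly in~$N_1$; choosing $\eta < \eta_0/2$ and combining with the $\eta_0/2$ approximation of~$\widetilde W$ keeps $\widetilde B''$ within the fixed threshold~$\eta_0$, so the diffeomorphic copy of~$S$ persists. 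Finally the bound $\frac{1}{CN_1} < \|B_1\|_{L^2} < \frac{C}{\sqrt{N_1}}$ would follow from Parseval together with the fact that all Fourier modes of~$B_1$ sit on the sphere $|k| = N_1$: the normalization fixing $\widetilde W = O(1)$ pins down the amplitude of the local model (about $N_1^{-1}$ per mode, for $\sim N_1$ modes), and the two-sided estimate reflects the count of representations of $N_1^2$ as a sum of three squares. The main obstacle is thus entirely in the quoted realization theorem; within the present reduction the only genuinely delicate verification is the $N_1$-independence of~$\eta$, which the rescaling above resolves.
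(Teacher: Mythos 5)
The paper offers no proof of this statement: it is quoted (in simplified form) as Theorem 2.1 of \cite{ELP}, with the hard realization input credited to \cite{Annals, Acta, EPT}, and your sketch reconstructs exactly that architecture --- local realization of $S$ by hyperbolic (hence $C^1$-structurally stable) periodic orbits, a Runge-type globalization to a genuine curl eigenfield on $\T^3$, rescaling into a ball of radius $1/N_1$, and the $N_1$-independence of the threshold $\eta$ via the favourable factor $N_1^{-1}$ on rescaled derivatives. The one refinement worth recording is that the oddness of $N_1$ is not needed for admissibility of the eigenvalue (trivially $N_1^2=N_1^2+0^2+0^2$, so every integer frequency occurs) but for the equidistribution of the normalized lattice points $\{k/N_1:|k|=N_1\}$ on the unit sphere (a consequence of Duke's theorem), which --- rather than the mere growth of their number --- is what drives the approximation step in \cite{EPT}.
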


In particular    
\begin{enumerate}[(ii)]
\item $B''$ has some contractible integral line (since the same is true for $B_{1}$). 
\end{enumerate}

In fact the theory developed in \cite{Annals, Acta, EPT} allows also to prescribe some vortex tubes 
of arbitrarily complicated topology
which can be realized by the vortex lines of the Beltrami field $B_1$. In this case the structural stability requires 
a stronger norm, namely $C^{3, \alpha}$. For the purpose of this paper we will consider the simple scenario 
from Theorem \ref{Thmdef:BN1}, as it is sufficient to prove magnetic reconnection.

The natural numbers $N_0,N_1$ will play the role of free parameters in our construction but eventually we will chose one of them much larger than the other in such a way that our solution at time $t=0$ will be close to (a rescaled version of) $B_{0}$ while at time $t = T >0$ will be close to (a rescaled version of) $B_{1}$. Thus, as consequence of $(i), (ii)$ above, a change of topology of the integral lines must have happened. 

%

\section{Stability of regular solutions of the MHD system in 3D}\label{Sec:3dStab}

The goal of this section is to provide a stability result for regular solutions of the Magnetohydrodynamic system. Later we will often work with some special reference solutions with zero velocity, however at this stage we prefer to prove a slightly more general perturbative result. The estimates below will be crucial for two reasons: on the one hand, they will quantify the error in the perturbative argument (see the remark below); on the other hand, they will allow us to construct a global solution as a small perturbation of some large global smooth solution (that will indeed be a Beltrami field).
 
%
%
%

The possibility to run a perturbative argument around strong solutions is of course not a novelty, however the important aspect of the next proposition is that we quantify the error in such a way that it depends only polynomially by the $L^2((0,T);W^{r,\infty}(\T^3))$ norm of the reference solution $(u, b)$, for $r \geq 1$, while the exponential dependence only involves its $L^2((0,T);L^{\infty}(\T^3))$ norm; see \eqref{est:stab}. This will be important to handle large initial data in our main theorem.

The initial data will be small perturbations of $(u_0,b_0)$, namely we focus on divergence-free vector fields $(w_0,m_0)$ such that
\begin{equation}
\|u_0-w_0\|_{H^r}+\|b_0-m_0\|_{H^r} \ll 1.
\end{equation}
We want to show that there exists a unique global regular solution $(w,m)$ starting from $(0,m_0)$. To do that, we proceed as in \cite{ELP}: we know that there exists a local solution $(w,m)$ and we prove that it can be extended globally, under suitable estimates for the Sobolev norms.

\begin{thm}\label{thm:stab}
Given some integer $r\geq 1$ and any $\sigma < \min ( \eta, \nu )$, let $(u,b) \in L^2((0,T);W^{r,\infty}(\T^3))$ be a global smooth solution of $\eqref{eq:mhd}$ with initial datum $(u_0,b_0)$ of zero mean such that 
\begin{equation}\label{AssOnWInfty}
\|u\|_{L^2 W^{s,\infty}}+\|b\|_{L^2W^{s,\infty}}<C(1+N^{s-1})
\end{equation}
for all integers $0 \leq s \leq r$, where $N \geq 1$. 
Then, there exists a sufficiently large positive constant $c$ such that, for any divergence-free vector field $(w_0,m_0)$ with zero mean and
\begin{equation}\label{assOnPertData3d}
\|u_0-w_0\|_{H^r}+\|b_0-m_0\|_{H^r} \leq \frac{1}{c} N^{1-r},
\end{equation}
the corresponding solution $(w,m)$ to \eqref{eq:mhd} is global and satisfies
\begin{align}\label{est:stab}
\|u(t,\cdot)-w(t,\cdot)\|_{H^s}+ & \|b(t,\cdot)-m(t,\cdot)\|_{H^s} 
\\ \nonumber
& \leq C (1 + N^{s-1}) e^{C(\int_0^t ( \|u (\cdot, s)\|^2_{L^\infty}  + \|b (\cdot, s)\|^2_{L^\infty} )ds} (\|u_0-w_0\|_{H^s}+\|b_0-m_0\|_{H^s})e^{- \sigma t},
\end{align}
for all $0\leq s\leq r$ and all $t>0$, with a $\sigma$-dependent constant $C$.
\end{thm}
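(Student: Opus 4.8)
The plan is to establish the stability estimate through a standard energy method applied to the difference of the two solutions, combined with a continuation/bootstrap argument to upgrade the local solution to a global one. Let me set $v := u - w$ and $c := b - m$ (the magnetic difference; I will rename to avoid clash with the large constant), so that $(v, c)$ solves a perturbed MHD system with the reference solution $(u,b)$ appearing as variable coefficients. The key structural feature to exploit is the cancellation in the nonlinear terms coming from divergence-freeness: when we test the velocity equation against $v$ and the magnetic equation against $c$ and integrate by parts, the symmetric coupling terms $(b \cdot \nabla) b$ and $(b \cdot \nabla) u$ largely cancel, and the transport terms $(u \cdot \nabla)(\cdot)$ vanish upon integration since $\dive u = 0$. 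This is exactly the mechanism that keeps the MHD energy estimates tractable.

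\smallskip

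\textbf{Step 1 (Energy identity for the $H^s$ difference).} I would differentiate the difference equations $m$ times ($0 \le m \le r$), test against $\nabla^m v$ and $\nabla^m c$ respectively, sum, and integrate by parts. The viscous and resistive terms produce the good dissipative quantities $-\nu \|\nabla^{m+1} v\|_{L^2}^2$ and $-\eta \|\nabla^{m+1} c\|_{L^2}^2$. The remaining commutator and coupling terms must be bounded by the reference-solution norms. The crucial point, matching the claim in \eqref{est:stab}, is to arrange the estimate so that the terms multiplying the \emph{top-order} energy only involve $\|u\|_{L^\infty}^2 + \|b\|_{L^\infty}^2$ (entering the Gronwall exponential), while the lower-order commutators involving derivatives of the reference solution $\|u\|_{W^{s,\infty}}, \|b\|_{W^{s,\infty}}$ are handled as inhomogeneous forcing. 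Using the Gagliardo--Nirenberg inequality recalled in the preliminaries to control $L^\infty$ norms of $(v,c)$ by $L^6$ and $\nabla$ in $L^2$, I would absorb the top-order contributions into the dissipation, paying only a factor depending on $\|u\|_{L^\infty}, \|b\|_{L^\infty}$.

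\smallskip

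\textbf{Step 2 (Gronwall and the factor $N^{s-1}$).} After the absorption, I expect a differential inequality of the form $\tfrac{d}{dt} E_s \le \big(C(\|u\|_{L^\infty}^2 + \|b\|_{L^\infty}^2) - 2\sigma\big) E_s + (\text{lower-order forcing})$, where $E_s$ is the $H^s$-energy of $(v,c)$ and the $-2\sigma$ comes from keeping a fraction of the dissipation (legitimate since $\sigma < \min(\eta,\nu)$) and using the Poincaré inequality (zero-mean data). Applying Gronwall's lemma yields the exponential prefactor $\exp(C\int_0^t(\|u\|_{L^\infty}^2 + \|b\|_{L^\infty}^2))$ and the decay $e^{-\sigma t}$ in \eqref{est:stab}. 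The factor $(1 + N^{s-1})$ enters because the forcing terms carry the reference-solution derivative norms, which by hypothesis \eqref{AssOnWInfty} are bounded by $C(1 + N^{s-1})$; one propagates this through the Gronwall iteration from $s=1$ up to $s=r$, the smallness assumption \eqref{assOnPertData3d} on the initial difference being tuned precisely so that the induction closes.

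\smallskip

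\textbf{Step 3 (Global existence by continuation).} Local existence and uniqueness of the strong solution $(w,m)$ is standard (cited earlier for the viscous-resistive case). The a priori bound from Steps 1--2 shows the $H^r$ norm of $(v,c)$ cannot blow up in finite time, so the local solution extends globally by the usual continuation criterion. \emph{The main obstacle} I anticipate is the bookkeeping in Step 1: organizing the many commutator terms so that the \emph{exponential} dependence genuinely involves only the $L^\infty$ norm of the reference solution while the higher derivative norms appear only \emph{polynomially} (as forcing), rather than inside the exponential. This separation is the whole point of the proposition and the delicate part is choosing which terms to absorb into dissipation versus which to treat as forcing, and verifying that each absorption indeed costs only $\|u\|_{L^\infty}, \|b\|_{L^\infty}$ and not higher norms.
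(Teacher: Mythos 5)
Your proposal is correct and follows essentially the same route as the paper: $H^s$ energy estimates on the difference system with induction on $s$, Leibniz/commutator bookkeeping that places only the $L^\infty$ norms of the reference solution in the Gronwall exponential while the higher $W^{s,\infty}$ norms enter polynomially as forcing (via \eqref{AssOnWInfty}), Poincar\'e plus a retained fraction of the dissipation to produce the $e^{-\sigma t}$ decay, and continuation for global existence. The only point treated somewhat implicitly in your sketch is the cubic term $e_r^3$ arising from the quadratic self-interaction of the perturbation, which the paper absorbs into the dissipation by a bootstrap on the smallness of $e_r$ guaranteed by \eqref{assOnPertData3d}; your remark that the smallness is ``tuned precisely so that the induction closes'' covers this.
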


\begin{rem}\label{RemarkAfterCor}
In the next Section we will apply this theorem with the choice $(u, b) = (0, M e^{-\eta N_0^2 t} B_0)$, where $B_0$ is the Beltrami field defined in \ref{b0}. It is immediate to check that this solves \eqref{eq:mhd} with an appropriate choice of the pressure (see next section) and that the assumptions of the theorem are satisfied with $N=N_0$.
\end{rem}

\begin{proof}
We denote by $P_{u,b}$ and $P_{w,m}$ the pressure function of, respectively, $(u, b)$ and $(w, m)$. We know from \cite{ST} that there exists a unique local solution $(w,m)$ of \eqref{eq:mhd} starting from $(w_0,m_0)$, and denote by $T^*$ the local time of existence. We start by proving the bound \eqref{est:stab} which will be enough to guarantee that the solution is actually global.\\
\\
\underline{\em Step 1} \hspace{0.5cm}Preliminaries.\\
\\
We define $v=w-u$ and $h=m-b$ it is easy to check that $(v,h)$ solves the following system
\begin{equation}\label{eq:difference}
\begin{cases}
\partial_t v+\dive\left(v\otimes v+2v\otimes u\right)+\nabla P_{v,h}=\nu\Delta v+\dive\left(h\otimes h+2h\otimes b\right),\\
\partial_t h+(v\cdot\nabla )h+(u\cdot \nabla)h+(v\cdot\nabla)b=\eta\Delta h+(h\cdot\nabla)v+(h\cdot\nabla)u+(b\cdot\nabla)v, \\
\dive v=\dive h=0,\\
v_0=w_0-u_0,\hspace{0.3cm}h_0=m_0-b_0,
\end{cases}
\end{equation}
where $P_{v,h}=P_{w, m}-P_{u,b}$. We recall that, since $(u,b)$ is a global smooth solution, the following energy equalities hold
\begin{equation}
\frac{1}{2}\frac{\de}{\de t}\int_{\T^3}\left(|u(t,x)|^2+|b(t,x)|^2\right)\de x=-\nu\int_{\T^3}|\nabla u(t,x)|^2\de x-\eta \int_{\T^3}|\nabla b(t,x)|^2\de x,
\end{equation}
\begin{equation}
\frac{1}{2}\frac{\de}{\de t}\int_{\T^3}|u(t,x)|^2\de x=-\nu\int_{\T^3}|\nabla u(t,x)|^2\de x+\int_{\T^3}(b\cdot\nabla)b\cdot u\,\de x,
\end{equation}
\begin{equation}
\frac{1}{2}\frac{\de}{\de t}\int_{\T^3}|b(t,x)|^2\de x=-\eta\int_{\T^3}|\nabla b(t,x)|^2\de x+\int_{\T^3}(b\cdot\nabla)u\cdot b\,\de x.
\end{equation}
Let us now define the time energies $e_s$ as follows
\begin{equation}
e_s(t):=\sum_{j=0}^s\int_{\T^3} \left(|\nabla^j v(t,x)|^2+|\nabla^j h(t,x) |^2\right)\de x.
\end{equation}
The goal now is to provide bounds on $e_s$ via an induction argument.\\
\\
\underline{\em Step 2} \hspace{0.5cm}Estimate on $e_0$.\\
\\
We multiply the first equation in \eqref{eq:difference} by $v$ and we obtain that
\begin{equation}
\partial_t\frac{|v|^2}{2}+(v\cdot\nabla)\frac{|v|^2}{2}+2(v\cdot\nabla)u\cdot v+\dive(P_{v,h} v)=\nu\Delta v\cdot v+(h\cdot\nabla )h\cdot v+ (h\cdot\nabla)b\cdot v + (b\cdot\nabla)h\cdot v,
\end{equation}
and integrating over $\T^3$ we get
\begin{equation}\label{eq:balance_v}
\frac{1}{2}\frac{\de}{\de t}\int_{\T^3}|v|^2\de x+2\int_{\T^3} (v\cdot\nabla)u\cdot v\,\de x=-\nu\int_{\T^3} |\nabla v|^2\de x+\int_{\T^3}(h\cdot\nabla )h\cdot v\de x+ \int_{\T^3}(h\cdot\nabla)b\cdot v\de x + \int_{\T^3}(b\cdot\nabla)h\cdot v\de x.
\end{equation}
We multiply the second equation in \eqref{eq:difference} by $h$ and, after integrating over $\T^3$ we get that
\begin{equation}\label{eq:balance_h}
\frac{1}{2}\frac{\de}{\de t}\int_{\T^3}|h|^2\de x+\int_{\T^3} (v\cdot\nabla)b\cdot h\,\de x=-\eta\int_{\T^3} |\nabla h|^2\de x+\int_{\T^3}(h\cdot\nabla )v\cdot h\de x+\int_{\T^3}(h\cdot\nabla)u\cdot h\de x+\int_{\T^3}(b\cdot\nabla)v\cdot h\de x.
\end{equation}
We use the identities
$$
\int_{\T^3}(h\cdot \nabla)h\cdot v\,\de x=-\int_{\T^3}(h\cdot\nabla)v\cdot h\,\de x,
$$
$$
\int_{\T^3}(b\cdot\nabla)h\cdot v\de x = - \int_{\T^3}(b\cdot\nabla)v\cdot h\de x
$$
and summing up \eqref{eq:balance_v} and \eqref{eq:balance_h} we get that
\begin{align}\label{eq:balance_vh}
\frac{1}{2}\frac{\de}{\de t}\int_{\T^3}\big(|v|^2+|h|^2\big)\de x&+2\int_{\T^3} (v\cdot\nabla)u\cdot v\,\de x+\int_{\T^3} (v\cdot\nabla)b\cdot h\,\de x=-\nu\int_{\T^3} |\nabla v|^2\de x-\eta \int_{\T^3}|\nabla h|^2\de x
\nonumber\\
&+\int_{\T^3}(h\cdot\nabla )u\cdot h\de x +\int_{\T^3}(h\cdot\nabla)b\cdot v\de x. 
\end{align}
By using integration by part and Young's inequality, we can obtain the following estimates
\begin{align*}
\left|\int_{\T^3} (v\cdot\nabla)u\cdot v\,\de x\right|&=\left|\displaystyle-\int_{\T^3} (v\cdot\nabla)v\cdot u\,\de x\right|\\
&\leq \|u\|_{L^\infty}\|v\|_{L^2}\|\nabla v\|_{L^2}\\
&\leq C\|u\|^2_{L^\infty}\|v\|_{L^2}^2+\varepsilon\|\nabla v\|_{L^2}^2,
\end{align*}
where $\varepsilon$ is a small constant that will be chosen later. Similarly 
\begin{itemize}
\item $\left| \displaystyle\int_{\T^3}(h\cdot\nabla)u\cdot h\de x\right|\leq C\|u\|_{L^\infty}^2\|h\|_{L^2}^2+\varepsilon\|\nabla h\|_{L^2}^2$
\vspace{0.2cm}
\item $\left| \displaystyle\int_{\T^3} (v\cdot\nabla)b\cdot h\,\de x\right|\leq C\|b\|^2_{L^\infty}\|v\|^2_{L^2}+\varepsilon\|\nabla h\|_{L^2}^2$
\vspace{0.2cm}
\item $\left| \displaystyle\int_{\T^3}(h\cdot\nabla)b\cdot v\,\de x\right|\leq C\|b\|^2_{L^\infty}\|h\|^2_{L^2}+\varepsilon\|\nabla v\|_{L^2}^2$
\end{itemize}
Then, by substituting in \eqref{eq:balance_vh}, we obtain that
\begin{equation*}
\frac{\de}{\de t} e_0(t)\leq C\left(\|u(t)\|_{L^\infty}^2+\|b(t)\|_{L^\infty}^2 \right)e_0(t)-2(\nu-2\varepsilon)\int_{\T^3} |\nabla v|^2\de x -2(\eta-2\varepsilon)\int_{\T^3}|\nabla h|^2\de x.
\end{equation*}
Finally, since $v$ and $h$ have zero mean for all times in which they are defined, we can use of Poincar\'e's inequality
$$
\|f\|_{L^2}\leq \|\nabla f\|_{L^2},
$$
and by properly fixing $\varepsilon$ we obtain that
\begin{equation*}
\frac{\de}{\de t} e_0(t)\leq\left[C\left(\|u(t)\|_{L^\infty}^2+\|b(t)\|_{L^\infty}^2 \right)-2\sigma\right]e_0(t),
\end{equation*}
where $\sigma\in(0,1)$ is a fixed quantity which depend on $\nu,\eta$. By Gronwall's lemma it follows that
\begin{equation}
e_0(t)\leq \left(\|v_0\|_{L^2}^2+\|h_0\|_{L^2}^2\right)\exp\left(C\int_0^t\|u(\tau)\|_{L^\infty}^2+\|b(\tau)\|_{L^\infty}^2\de \tau -2\sigma t \right),
\end{equation}
which leads to
\begin{align}
\|u(t,\cdot)-w(t,\cdot)\|_{L^2}^2 & +
\|b(t,\cdot)-m(t,\cdot)\|_{L^2}^2  
\\ \nonumber
& \leq \exp\left(C\int_0^t\|u(\tau)\|_{L^\infty}^2+\|b(\tau)\|_{L^\infty}^2\de \tau -2\sigma t \right) \left(\|u_0-w_0\|_{L^2}^2+\|b_0-m_0\|_{L^2}^2\right),
\end{align}
which implies  \eqref{est:stab} for $s=0$.\\
\\
\underline{\em Step 3} \hspace{0.5cm} Inductive step.\\
\\
We are assuming that 
%
$$
e_s(t)\leq C (1 + N^{2s-2}) \exp \left(C\int_0^t (\| u(\tau) \|_{L^\infty}^2+\| b(\tau) \|_{L^\infty}^2)\de \tau\right) e^{-2\sigma t}\left( \|v_0\|_{H^s}^2+\|b_0\|_{H^s}^2\right),
$$
for all $s<r$. We will show that the bound holds also for $s=r$. Let $\alpha\in\N^3$ with $|\alpha|\leq r$ and differentiate the equation for the velocity by $\nabla^\alpha$ to obtain
\begin{align}
\partial_t\partial^\alpha v-\nu\Delta\partial^\alpha v&+\nabla\partial^\alpha P_v+\sum_{\beta\leq \alpha}{\alpha \choose \beta}(\partial^\beta v\cdot\nabla)\partial^{\alpha-\beta}v+\sum_{\beta\leq \alpha}{\alpha\choose\beta}\left[(\partial^\beta u\cdot\nabla)\partial^{\alpha-\beta}v+(\partial^\beta v\cdot\nabla)\partial^{\alpha-\beta}u \right]\nonumber\\
&=\sum_{\beta\leq \alpha}{\alpha\choose\beta}\left[(\partial^\beta h\cdot\nabla)\partial^{\alpha-\beta}h+(\partial^\beta b\cdot\nabla)\partial^{\alpha-\beta}h+ (\partial^\beta h\cdot\nabla)\partial^{\alpha-\beta}b\right].
\end{align}
Multiply the above equation by $\partial^\alpha v$ and integrating in space we get
\begin{align*}
\frac{1}{2}\frac{\de}{\de t}&\int_{\T^3}|\partial^\alpha v|^2\de x+\nu\int_{\T^3}|\nabla\partial^\alpha v|^2\de x+\int_{\T^3}\sum_{\beta\leq \alpha}{\alpha \choose \beta}(\partial^\beta v\cdot\nabla)\partial^{\alpha-\beta}v\,\partial^\alpha v \,\de x\\
&+\int_{\T^3}\sum_{\beta\leq \alpha}{\alpha\choose\beta}\left[(\partial^\beta u\cdot\nabla)\partial^{\alpha-\beta}v\,\partial^\alpha v+(\partial^\beta v\cdot\nabla)\partial^{\alpha-\beta}u\, \partial^\alpha v\right]\de x\\
&=\int_{\T^3}\sum_{\beta\leq \alpha}{\alpha\choose\beta}\left[(\partial^\beta h\cdot\nabla)\partial^{\alpha-\beta}h\,\partial^\alpha v+(\partial^\beta b\cdot\nabla)\partial^{\alpha-\beta}h\,\partial^\alpha v+ (\partial^\beta h\cdot\nabla)\partial^{\alpha-\beta}b\,\partial^\alpha v\right]\de x.
\end{align*}
We use the divergence-free condition and, by integration by parts and Young's inequality, we can estimate the terms above as follows
\begin{equation}
\left|\int_{\T^3} (\partial^\beta v\cdot\nabla)\partial^{\alpha-\beta}v\,\partial^\alpha v \,\de x\right|\leq \frac{\e}{6}\int_{\T^3}|\nabla\partial^\alpha v|^2\de x+C\int_{\T^3}|\partial^\beta v|^2|\partial^{\alpha-\beta} v|^2\de x
\end{equation}
\begin{equation}
\left|\int_{\T^3} (\partial^\beta u\cdot\nabla)\partial^{\alpha-\beta}v\,\partial^\alpha v \,\de x\right|\leq \frac{\e}{6}\int_{\T^3}|\nabla\partial^\alpha v|^2\de x+C\int_{\T^3}|\partial^\beta u|^2|\partial^{\alpha-\beta} v|^2\de x
\end{equation}
\begin{equation}
\left|\int_{\T^3} (\partial^\beta v\cdot\nabla)\partial^{\alpha-\beta}u\,\partial^\alpha v \,\de x\right|\leq \frac{\e}{6}\int_{\T^3}|\nabla\partial^\alpha v|^2\de x+C\int_{\T^3}|\partial^\beta v|^2|\partial^{\alpha-\beta} u|^2\de x
\end{equation}
\begin{equation}
\left|\int_{\T^3} (\partial^\beta h\cdot\nabla)\partial^{\alpha-\beta}h\,\partial^\alpha v \,\de x\right|\leq \frac{\e}{6}\int_{\T^3}|\nabla\partial^\alpha v|^2\de x+C\int_{\T^3}|\partial^\beta h|^2|\partial^{\alpha-\beta} h|^2\de x
\end{equation}
\begin{equation}
\left|\int_{\T^3} (\partial^\beta b\cdot\nabla)\partial^{\alpha-\beta}h\,\partial^\alpha v \,\de x\right|\leq \frac{\e}{6}\int_{\T^3}|\nabla\partial^\alpha v|^2\de x+C\int_{\T^3}|\partial^\beta b|^2|\partial^{\alpha-\beta} h|^2\de x
\end{equation}
\begin{equation}
\left|\int_{\T^3} (\partial^\beta h\cdot\nabla)\partial^{\alpha-\beta}b\,\partial^\alpha v \,\de x\right|\leq \frac{\e}{6}\int_{\T^3}|\nabla\partial^\alpha v|^2\de x+C\int_{\T^3}|\partial^\beta h|^2|\partial^{\alpha-\beta} b|^2\de x
\end{equation}
By summing up over $\beta$ the above inequalities we get that
\begin{align*}
\frac{1}{2}\frac{\de}{\de t}\int_{\T^3}|\partial^\alpha v|^2\de x+\nu\int_{\T^3}|\nabla\partial^\alpha v|^2\de x & \leq \e\int_{\T^3}|\nabla\partial^\alpha v|^2\de x+C\|u(t)\|_{L^\infty}^2 \int_{\T^3}|\partial^\alpha v|^2\de x\\
&+C\sum_{s=0}^{r-1}\int_{\T^3}|\nabla^s v|^2|\nabla^{r-s} v|^2 \de x+C\sum_{s=1}^{r}\int_{\T^3}|\nabla^s u|^2|\nabla^{r-s} v|^2 \de x\\
&+C\|b(t)\|_{L^\infty}^2 \int_{\T^3}|\partial^\alpha v|^2\de x+C\sum_{s=0}^{r-1}\int_{\T^3}|\nabla^s h|^2|\nabla^{r-s} h|^2 \de x\\
&+C\sum_{s=1}^{r}\int_{\T^3}|\nabla^s b|^2|\nabla^{r-s} h|^2 \de x.
\end{align*}
We use Gagliardo-Nieremberg's inequality to estimate
\begin{equation}
\|\nabla^s v\|_{L^\infty}\leq C \|\nabla^{s+2} v\|^{1/2}_{L^2}\|\nabla^s v\|^{1/2}_{L^6}\leq C \|\nabla v\|^{1/2}_{H^{s+1}}\|v\|^{1/2}_{H^{s+1}},
\end{equation}
and the definition of $e_r$ to get
\begin{align*}
\frac{1}{2}\frac{\de}{\de t}\int_{\T^3}|\partial^\alpha v|^2\de x+\nu\int_{\T^3}|\nabla\partial^\alpha v|^2\de x & \leq \e\int_{\T^3}|\nabla\partial^\alpha v|^2\de x+C(\|u(t)\|_{L^\infty}^2+\|b(t)\|_{L^\infty}^2)e_r(t) \\
&+C (\|\nabla v(t)\|_{H^r}+\|\nabla h(t)\|_{H^r})e_r(t)^{3/2}\\
&+C\sum_{s=1}^{r}(\|\nabla^s u(t)\|_{L^\infty}^2+\|\nabla^s b(t)\|_{L^\infty}^2)e_{r-s}(t)\\
&\leq \e\int_{\T^3}|\nabla\partial^\alpha v|^2\de x+C(\|u(t)\|_{L^\infty}^2+\|b(t)\|_{L^\infty}^2)e_r(t) \\
&+\e(\|\nabla v(t)\|_{H^r}^2+\|\nabla h(t)\|_{H^r}^2)+Ce_r(t)^3\\
&+C\sum_{s=1}^{r}(\|\nabla^s u(t)\|_{L^\infty}^2+\|\nabla^s b(t)\|_{L^\infty}^2)e_{r-s}(t),
\end{align*}
where in the last computation we applied Young's inequality. Finally, by using Poincar\'e's inequality and summing over $\alpha$, we end up to
\begin{align*}
\frac{\de}{\de t}\sum_{|\alpha|\leq r}\int_{\T^3}|\partial^\alpha v|^2\de x &\leq -2(\nu-\e)\sum_{|\alpha|\leq r} \int_{\T^3}|\nabla\partial^\alpha v|^2\de x+Ce_r(t)^3\\
&+C(\|u(t)\|_{L^\infty}^2+\|b(t)\|_{L^\infty}^2)e_r(t)\\
&+C\sum_{s=0}^{r-1}\left(\|\nabla^{r-s}u(t)\|_{L^\infty}^2+\|\nabla^{r-s}b(t)\|_{L^\infty}^2\right)e_s(t).\\
\end{align*}
We now consider the equation for the magnetic field: we apply $\partial^\alpha$ to the equation obtaining
\begin{align*}
\partial_t \partial^\alpha h-\eta\Delta\partial^\alpha h & +\sum_{\beta \leq \alpha} {\alpha\choose\beta} \left((\partial^\beta v\cdot\nabla)\partial^{\alpha-\beta}h+(\partial^\beta u\cdot\nabla)\partial^{\alpha-\beta}h +(\partial^\beta v\cdot\nabla)\partial^{\alpha-\beta}b\right)\\
& =\sum_{\beta \leq \alpha} {\alpha\choose\beta} \left((\partial^\beta h\cdot\nabla)\partial^{\alpha-\beta}v+(\partial^\beta h\cdot\nabla)\partial^{\alpha-\beta}u+(\partial^\beta b\cdot\nabla)\partial^{\alpha-\beta} v \right).
\end{align*}
By multiplying by $\partial^\alpha h$ we get
\begin{align*}
\frac{1}{2}\frac{\de}{\de t} \int_{\T^3}|\partial^\alpha h|^2&\de x+\eta\int_{\T^3}|\nabla\partial^\alpha h|^2\de x \\ &+\sum_{\beta \leq \alpha} {\alpha\choose\beta} \int_{\T^3}\left((\partial^\beta v\cdot\nabla)\partial^{\alpha-\beta}h+(\partial^\beta u\cdot\nabla)\partial^{\alpha-\beta}h +(\partial^\beta v\cdot\nabla)\partial^{\alpha-\beta}b\right)\cdot\partial^\alpha h \de x\\
& =\sum_{\beta \leq \alpha} {\alpha\choose\beta} \int_{\T^3}\left((\partial^\beta h\cdot\nabla)\partial^{\alpha-\beta}v+(\partial^\beta h\cdot\nabla)\partial^{\alpha-\beta}u+(\partial^\beta b\cdot\nabla)\partial^{\alpha-\beta} v \right)\cdot\partial^\alpha h\de x.
\end{align*}
We now use estimates similar to those above, where $\nabla^\alpha h$ plays the role of $\nabla^\alpha v$, obtaining
\begin{align*}
\frac{\de}{\de t}\sum_{|\alpha|\leq r}\int_{\T^3}|\partial^\alpha h|^2\de x&\leq -2(\eta-\e)\sum_{|\alpha|\leq r} \int_{\T^3}|\nabla\partial^\alpha h|^2\de x\\
&+C\left(\|u(t)\|_{L^\infty}^2+\|b(t)\|^2_{L^\infty}\right)e_r(t)+C e_r(t)^3\\
&+\sum_{s=0}^{r-1}\left(\|\nabla^{r-s}u(t)\|^2_{L^\infty}+\|\nabla^{r-s}b(t)\|^2_{L^\infty}\right)e_s(t).
\end{align*}
By summing the inequalities we get
\begin{align}
\frac{\de}{\de t}e_r(t)\leq &-2(\nu+\eta-2\e)e_r(t)+C\left(\|u(t)\|_{L^\infty}^2+\|b(t)\|^2_{L^\infty}\right)e_r(t)+C e_r(t)^3 \nonumber\\
&+2\sum_{s=0}^{r-1}\left(\|\nabla^{r-s}u(t)\|^2_{L^\infty}+\|\nabla^{r-s}b(t)\|^2_{L^\infty}\right)e_s(t)\label{stab-final}
\end{align}
Let us assume that $e_r(t)^2$ is small enough that 
$$
-2(\nu+\eta-2\e)e_r+C e_r^3\leq -2\sigma e_r.
$$
Then, by substituting in \eqref{stab-final} and using the inductive step on $e_s$ we obtain (recall that here $s \geq 1$)
\begin{align*}
& \frac{\de}{\de t}e_r(t)\leq  -2\sigma e_r+C(\|u(t)\|_{L^\infty}^2+\|b(t)\|_{L^\infty}^2)e_r\\
&+C N^{2s-2} \exp\left(-2\sigma t+C\int_0^t\|u(\tau)\|^2_{L^\infty}+\|b(\tau)\|^2_{L^\infty}\de \tau\right)
\sum_{s=0}^{r-1}\left(\|\nabla^{r-s}u(t)\|_{L^\infty}^2+\|\nabla^{r-s}b(t)\|_{L^\infty}^2\right)Q_s(t)e_s(0),
\end{align*}
and hence by Gronwall's lemma
\begin{align}\nonumber
e_r(t)& \leq C \exp\left(-2\sigma t+C(\|u\|^2_{L^2L^\infty}+\|b\|^2_{L^2L^\infty})\right)e_r(0)\left[ 1+\sum_{s=0}^{r-1}
N^{2s-2} \left(\|\nabla^{r-s}u\|_{L^2L^\infty}^2+\|\nabla^{r-s}b\|_{L^2L^\infty}^2\right)\right]
\\ \label{aPrioriBoundFinal}
&\leq C (1 + N^{2r-2}) \exp\left(-2\sigma t+C(\|u\|^2_{L^2L^\infty}+\|b\|^2_{L^2L^\infty})\right)e_r(0),
\end{align}
where in the second inequality we used assumption \eqref{AssOnWInfty}. 

Now, the assumption we made on $e_r(t)$ is satisfied at time $t=0$ (and thus for short times) by the smallness hypothesis \eqref{assOnPertData3d}. Moreover, this can be extended to any further time using the inequality \eqref{aPrioriBoundFinal} and again  \eqref{assOnPertData3d}. This completes the proof.
%
\end{proof}

\section{Magnetic reconnection in 3D}\label{Sec:Proof3D}
In this section we provide an example of magnetic reconnection in the three-dimensional case. The idea is to construct a global solution of \eqref{eq:mhd} via a perturbative argument in such a way that we can ``control" the topology of its magnetic lines at $t=0$ and $t=T$.
\begin{proof}[Proof of Theorem \ref{thm:main}, d=3] We divide the proof in several steps.\\
\\
\underline{\em Step 1} \hspace{0.3cm}Construction of the global smooth solution.\\
\\
Let $B_0,B_1$ be the Beltrami fields defined in \eqref{b0} and in Theorem \ref{Thmdef:BN1} respectively, and consider the couple $(u_0,b_0)=(0,MB_0)$ as an initial datum for \eqref{eq:mhd}. Then, a global smooth solution of \eqref{eq:mhd} is given by 
\begin{equation}
(u(t,x),b(t,x))=\left(0,Me^{-\eta N_0^2t}B_0(x)\right),
\end{equation}
with pressure
$$
P_{u,b}(t,x) = - \frac12 M^2e^{-2\eta N_0^2t}|B_0(x)|^2.
$$
This may be easily verified recalling that $B_0$ is a Beltrami field.
Now, consider the couple $(0, m_0)$ as initial datum of \eqref{eq:mhd}, where $m_0$ is given by
$$
m_0:=MB_0+\delta B_1.
$$
Looking at the definition of $\eqref{b0}$ we immediately see that
$$
\| MB_0 \|_{H^r} = M N_0^{r} 
$$ 
We want to construct a global smooth solution $(w,m)$ 
starting from such an initial datum. First of all, we define
$$
h_0:=m_0 - b_0=\delta B_1.
$$
We compute the $H^r$ norm of $\delta B_1$ using \eqref{B-norm}, the fact that $B_1$ is Beltrami 
and the equivalence between the $H^1$ norm of $B_1$ and the $L^2$ norm of its curl, so that we arrive to 
\begin{equation}
C N_1^{r-1}< \| B_1\|_{L^2}< C N_1^{r - \frac12}.
\end{equation}
Thus in particular we have that
\begin{equation}\label{cond:1Preq}
\|h_0\|_{H^s} \leq C\delta N_1^s.
\end{equation}
Since we want to apply Theorem \ref{thm:stab}, we require that
\begin{equation}\label{cond:1}
\delta N_1^r \ll N_0^{1-r}.
\end{equation}
Thus, recalling also Remark \ref{RemarkAfterCor}, 
we know that there exists a unique global solution $(w,m)$ starting from $(0,m_0)$ and the difference  
$h(t, \cdot) = m(t,\cdot) - b(t,\cdot) $ 
satisfies
\begin{equation}\label{est:stab_globInProof!}
\|w(t,\cdot)\|_{H^s}+\|h(t,\cdot)\|_{H^s} \leq C(N_0^{s-1}+1)e^{-\sigma t} \|h_0\|_{H^s}
\end{equation}
for all $0\leq s \leq r$.
\\
\\
\underline{\em Step 2} \hspace{0.3cm} Further estimates on the global solution.\\
\\
We need more estimates on the difference $h$ in order to control the behavior of the fluid at time $t=T$.
Recall that, since our reference solution if $u=0$, here we have $w=v$.
\begin{equation}
\label{eq:duhamel-h}
h(t,\cdot)=e^{\eta t\Delta} h_0+\int_0^t e^{\eta (t-s)\Delta}\dive \big(h(s)\otimes v(s)-v(s)\otimes h(s)+b(s)\otimes v(s)-v(s)\otimes b(s)\big)\,\de s
\end{equation}
For simplicity, we define
\begin{equation}\label{Def:Lh}
L_h(t,\cdot):=\int_0^t e^{\eta(t-s)\Delta}\dive \big(h(s)\otimes v(s)-v(s)\otimes h(s)\big)\,\de s,
\end{equation}
\begin{equation}\label{Def:Lb}
L_b(t,\cdot):=\int_0^t e^{\eta(t-s)\Delta}\dive \big(b(s)\otimes v(s)-v(s)\otimes b(s)\big)\,\de s.
\end{equation}
First of all, note that by Theorem \ref{thm:stab} and the estimate \eqref{cond:1} of the previous step, we get that
\begin{align}
\|v(t,\cdot)\|_{H^r}+\|h(t,\cdot)\|_{H^r}&\leq C N_0^{r-1} e^{-\sigma t} \|h_0\|_{H^r} \nonumber\\
&\leq \delta N_0^{r-1}N_1^r;
\end{align}
recall that here we have chosen $v_0=0$. Then, by using the above formula, we estimate the tensorial products in $L_h,L_b$ as follows
\begin{align}
\| h(s)\otimes v(s)\|_{H^{r+1}}&\leq \| h(s)\|_{L^\infty} \| v(s)\|_{H^{r+1}}+ \| v(s)\|_{L^\infty}\| h(s)\|_{H^{r+1}} \nonumber\\
&\leq \| h(s)\|_{H^2} \| v(s)\|_{H^{r+1}}+ \| v(s)\|_{H^2}\| h(s)\|_{H^{r+1}}\nonumber\\
&\leq C \delta^2 N_0^{r+1}N_1^{r+3} e^{-\sigma s},\\
\| b(s)\otimes v(s)\|_{L^2}&\leq \|b(s)\|_{L^\infty}\|v(s)\|_{L^2}\leq C \delta e^{-\eta N_0^2 s},\\
\| b(s)\otimes v(s)\|_{H^{r+1}}&\leq C\| v(s)\|_{L^\infty} \|b(s)\|_{H^{r+1}}+ C\| v(s)\|_{H^{r+1}}\|b(s)\|_{L^\infty}\nonumber\\
&\leq C\| v(s)\|_{H^2} \|b(s)\|_{H^{r+1}}+ C\| v(s)\|_{H^{r+1}}\|b(s)\|_{L^\infty}\nonumber\\
&\leq C e^{-\eta N_0^2 s}\big(\delta N_0^{r+2}N_1^{2}  +\delta N_0^{r} N_1^{r+1}\big) \nonumber \\
&\leq Ce^{-\eta N_0^2 s}\delta N_0^{r+2} N_1^2,
\end{align}
where in the last inequality we assumed that
\begin{equation}\label{cond:2}
N_0^2 \gg N_1^{r-1}.
\end{equation}
Using (see \cite{ELP}, Lemma 4.3)
\begin{equation}\label{UT1Fin}
\|e^{s\Delta}f \|_{H^m} \leq e^{-s}\|f\|_{H^m}, \qquad \mbox{for $f$ with zero-average}, 
\end{equation}
we estimate $L_h$ as follows
\begin{align}
\|L_h(t,\cdot)\|_{H^r}&\leq C\int_0^t \|e^{\eta(t-s)\Delta} \big(h(s)\otimes v(s)\big)\|_{H^{r+1}}\de s\nonumber\\
&\leq C\int_0^t e^{-\eta(t-s)} \|h(s)\otimes v(s)\|_{H^{r+1}}\de s\nonumber\\
& \leq  C\delta^2 N_0^{r+1}N_1^{r+3}\int_0^t e^{-\eta(t-s)} e^{-\eta\sigma s}\de s\nonumber\\
&\leq  C\delta^2 N_0^{r+1}N_1^{r+3}.
\end{align}
On the other hand, using  (see \cite{ELP}, Lemma 4.3)
\begin{equation}\label{UTFBis}
\|e^{s\Delta}f \|_{H^m} \leq C s^{-m/2} \|f\|_{L^2}, \qquad \mbox{for $f$ with zero-average},
\end{equation}
we estimate
\begin{align}
\|L_b(t,\cdot)\|_{H^r}&\leq C\int_0^{t/2} \|e^{\eta(t-s)\Delta}\big(b(s)\otimes v(s)\big)\|_{H^{r+1}}\de s + C\int_{t/2}^t \|e^{\eta(t-s)\Delta}\big(b(s)\otimes v(s)\big)\|_{H^{r+1}}\de s\nonumber\\
&\leq C\int_0^{t/2} (t-s)^{-\frac{r+1}{2}} \|b(s)\otimes v(s)\|_{L^2}\de s + C\int_{t/2}^t e^{-\eta(t-s)} \|b(s)\otimes v(s)\|_{H^{r+1}}\de s\nonumber\\
&\leq C \delta\int_0^{t/2} (t-s)^{-\frac{r+1}{2}} e^{-\eta N_0^2 s}\de s + C\delta N_0^{r+2} N_1^2\int_{t/2}^t e^{-\eta(t-s)}e^{-\eta N_0^2 s}\de s\nonumber\\
&\leq C\delta N_0^{-2}+ C\delta N_0^{r+2} N_1^2.
\end{align}
\\
\\
\underline{\em Step 3} \hspace{0.3cm} Choice of the parameters.\\
\\
In this step we fix the parameters $N_0,N_1,\delta$. First of all, we know that at time $t=0$ the magnetic field $m_0$ satisfies
$$
m_0=MB_0+\delta B_1.
$$
Then, let us consider the rescaled vector field
$$
M^{-1}m_0,
$$
it satisfies 
$$
\|M^{-1}m_0 -B_0\|_{H^r}\leq C \delta N_1^r\ll N_0^{1-r}.
$$
Thus, recalling \eqref{quantstab2} and property $(i)$ of Section \ref{Sec:BeltramiTaylor}, using the Sobolev embedding and taking $r$ sufficiently large, we see that the integral lines of $M^{-1}m_0$ are all non contractible. Furthermore, since the rescaling does not change the topology of its integral lines, all the integral lines of $m_0$ are contractible.\\
We now consider the behaviour of the fluid at time $t=T$. We rescale the magnetic field as
\begin{equation}
\delta^{-1}e^{\eta N_1^2T} m(T,\cdot),
\end{equation}
and then since $m=b+h$ and by formula \eqref{eq:duhamel-h} we get 
\begin{align*}
\delta^{-1}&e^{\eta N_1^2T} m(T,\cdot)=B_1+\frac{M}{\delta} e^{-\eta(N_0^2-N_1^2)T}B_0(x)\\
&+\delta^{-1}e^{\eta N_1^2T}\int_0^T e^{\eta(T-s)\Delta}\dive \big(h(s)\otimes v(s)+b(s)\otimes v(s)-v(s)\otimes h(s)-v(s)\otimes b(s)\big)\,\de s
\end{align*}
Our goal is to choose the constant $\delta, N_0, N_1$ such that 
$$
\left\| \delta^{-1}e^{\eta N_1^2T} m(T,\cdot)-B_1 \right\|_{H^r} \ll 1, 
$$
 for a sufficiently large $r$. 
Thus, since $\delta^{-1}e^{\eta N_1^2T} m(T,\cdot)$ is simply a rescaling of $m(T,\cdot)$, the integral lines of $m(T,\cdot)$ will be locally diffeomorphic to the set $\mathcal{S}$ defined in Section \ref{Sec:BeltramiTaylor}, as consequence of \eqref{quantstab}, property $(ii)$ and Sobolev embedding. In particular $m(T,\cdot)$ will possess some contractible integral lines, thus the set of its integral lines will be not homotopically equivalent to that of $B_1$, proving that magnetic reconnection must have happened between the times $t=0$ and $t=T$.\\
It remains to show that $\tilde{b}_1$ verify \eqref{quantstab}.  By the estimates proved in the Step 2 we have that
\begin{align}
\|M\delta^{-1} e^{-\eta(N_0^2-N_1^2)T}B_0\|_{H^r}&\leq M\delta^{-1} e^{-\eta(N_0^2-N_1^2)T} \|B_0\|_{H^r}\nonumber\\
&\leq M\delta^{-1} e^{-\eta(N_0^2-N_1^2)T} N_0^r,\label{cond:3}
\end{align}

\begin{align}
\|\delta^{-1}e^{\eta N_1^2T} L_h(T,\cdot)\|_{H^r}&\leq C \delta^{-1}e^{\eta N_1^2T} \|L_h(T,\cdot)\|_{H^r}\nonumber\\
&\leq C \delta^{-1}e^{\eta N_1^2T} \delta^2 N_0^{r+1}N_1^{r+3}\nonumber\\
&= C\delta N_0^{r+1}N_1^{r+3}e^{\eta N_1^2T},\label{cond:4}
\end{align}

\begin{align}
\|\delta^{-1}e^{\eta N_1^2T} L_b(T,\cdot)\|_{H^r}&\leq \delta^{-1}e^{\eta N_1^2T}\|L_b(T,\cdot)\|_{H^r}\nonumber\\
&\leq \delta^{-1}e^{\eta N_1^2T}\delta N_0^{-2}\nonumber\\
&=e^{\eta N_1^2T}N_0^{-2}.\label{cond:5}
\end{align}
In order to make the above quantities small, we define $\delta$ as
\begin{equation}
\delta:= N_0^{-(r+1)}N_1^{-(r+3)}e^{-2\eta N_1^2T}.
\end{equation}
With this choice, we can easily verify that \eqref{cond:1} holds. Moreover, by the choice of $\delta$ as above, we have that \eqref{cond:3} is small if
\begin{equation}\label{cond:3'}
e^{-\eta N_0^2 T}\ll N_0^{-2r-1}N_1^{-r-3} M^{-1} e^{-3\eta N_1^2T}.
\end{equation}
Note that if $N_0$ is chosen sufficiently larger than $N_1$, \eqref{cond:2} and \eqref{cond:3'} are satisfied and the quantities in \eqref{cond:4} and \eqref{cond:5} can be made arbitrarily small.\\
\\
\underline{\em Step 4}\hspace{0.5cm} Rescaling of the initial datum.\\
\\
Lastly, to complete the proof of the theorem, we need to rescale the norm of $m_0$ in the above construction. This can be done by replacing it with the initial condition $\frac{Mm_0}{\|m_0\|_L^2}$, since the rescaling factor does not change anything in the above arguments.
\end{proof}

Some remarks on the proof above are in order. 

\begin{rem}
\label{Remark:LargeDataWithStructure1}
The choice $u_0=0$ simplifies the proof but we may easily generalize the argument to small velocities, namely taking 
$\| u_0 \|_{H^r} = \varepsilon$, where the size of the small parameter $\e$ depends on all the relevant parameters we introduced in the proof. Moreover, we may consider large data $u_0$ (and $\varepsilon$ perturbation of them) if we add an appropriate structure. For example if we choose $u_0 = M B_0$ it is easy to see that
$$
\left(Me^{-\nu N_0^2t}B_0(x),Me^{-\eta N_0^2t}B_0(x)\right),
$$
is a smooth solution of \eqref{eq:mhd} with initial datum $(u_0,b_0)=(MB_0,MB_0)$, choosing the pressure
$$
P= \frac{M^2}{2} ( e^{- 2 \nu N_0^2t} -  e^{-2\eta N_0^2t})|B_0|^2.
$$
Then, when we estimate the $H^r$-norm of \eqref{eq:duhamel-h} there are additional terms to be computed, i.e.
$$
\int_0^t e^{\eta (t-s)\Delta}\dive \big(h(s)\otimes u(s)-u(s)\otimes h(s)\big) \de s,
$$
which can be analyzed in the same way we did for $L_b$ to close the argument. 
\end{rem}
\begin{rem}\label{Remark:LargeDataWithStructure2}
In the computations above it is crucial that $\eta>0$, indeed the choice of the frequency $N_0 = N_0(\eta)$ depends on $\eta$, and $N_0(\eta) \to \infty$ as $\eta\to 0$, preventing to promote the magnetic reconnection scenario to the ideal case (in fact we choose $N_{0}$ proportional to $\eta^{-1/2}$). 
\end{rem}
\begin{rem}\label{Remark:LargeDataWithStructure3}
Differently to $\eta$, the estimates above do not blow up in the vanishing viscosity limit $\nu\to 0$ and one could in principle prove a similar (local in time) reconnection statement for $\nu =0$. 
\end{rem}

\section{Taylor fields}\label{Sec:BeltramiTaylorBis}

After fixing the notations and recalling some definitions and structurally stability results, we introduce 
the main mathematical objects which are needed in our 2D magnetic reconnection proof. These are the 
so-called Taylor fields, which 
may be viewed as a 2D counterpart of the Beltrami fields from Section \ref{Sec:BeltramiTaylor}.

 We say that $v:\T^2 \to \R^2$ is a {\em Hamiltonian} vector field if it can be express as the orthogonal gradient of a scalar function $\psi$, i.e.
$$
v=\nabla^\perp \psi:=(\partial_{x_2}\psi,-\partial_{x_1}\psi).
$$
Hamiltonian vector fields are by definition divergence-free and we denote by
$$
D^r_H(\T^2)=\{ v\in C^r(\T^2): v \mbox{ is a Hamiltonian vector field}\}.
$$
We recall that a singular point $x_0$ of a vector field $v\in C^r(\T^2)$ is said to be {\em non-degenerate} if $\nabla v(x_0)$ is an invertible matrix. It is worth to note that if $v$ is a smooth divergence-free vector field, then
a non-degenerate singular point of $v$ must be either a saddle or a center.

\begin{rem}
By the Helmoltz decomposition, an incompressible vector field on $\T^2$ is either Hamiltonian or a constant vector field. 
Since we will always work with zero-average vector fields for us there will be no difference between being 
incompressible and Hamiltonian. 
\end{rem}

We now give the definition of structural stability.
\begin{defn}
A vector field $v$ on $\T^d$ is structurally stable if there is a neighborhood $\mathcal{U}$ of $v$ in $C^1(\T^d)$ such that whenever $v'\in \mathcal{U}$ there is a homeomorphism of $\T^d$ onto itself transforming trajectories of $v$ onto trajectories of $v'$.
\end{defn}
Note that, as a consequence of the classical result of Peixoto \cite{Pe}, no 2D divergence-free vector field with 
some critical point that is a center is structurally stable under general $C^r$ perturbations. 
This is because centers might be destroyed adding small sink or sources, which in the incompressible setting are however 
forbidden. Thus, in the divergence-free case a stronger result holds, see \cite{MW}.
Moreover, the Peixoto stability result does not allow any kind of connection between saddle points, while in 
the incompressible 
case saddle self-connections are allowed. On the other hand, heteroclinic self connections are 
expected to give rise to bifurcation phenomena also in the incompressible case (see Theorem \ref{thm:main3}).

\begin{thm}[Ma, Wang \cite{MW}]\label{thm:MW}
A divergence-free Hamiltonian vector field $v\in D^r_H(\T^2)$ with $r\geq 1$ is structurally stable under Hamiltonian vector field perturbations if and only if
\begin{itemize}
\item $v$ is regular, i.e. all singular points of $v$ are not degenerate;
\item all saddle points of $v$ are self-connected.
\end{itemize}
Moreover, the set of all $C^r$ structurally stable Hamiltonian vector fields is open and dense in~$D^r_H(\T^2)$.
\end{thm}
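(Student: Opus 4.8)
The plan is to read the theorem as a Hamiltonian analogue of Peixoto's theorem, exploiting the strong rigidity forced on the phase portrait by the existence of a single-valued Hamiltonian. Writing $v=\nabla^\perp\psi$ with $\psi\colon\T^2\to\R$ single-valued (as is legitimate for zero-average fields, by the remark above), the first point I would record is that every regular trajectory lies on a connected component of a regular level set of $\psi$ and is therefore a closed curve, i.e.\ a periodic orbit (contractible around a center, or non-contractible and winding around the torus). Hence all the non-periodic dynamics is concentrated on the \emph{separatrix skeleton} $\Sigma$, the union of the critical points with their separatrices, and the whole topological type of the portrait is encoded by $\Sigma$ together with the combinatorics of the periodic annuli filling $\T^2\setminus\Sigma$. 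Because only Hamiltonian perturbations are allowed, centers and homoclinic loops are admissible stable features; this is precisely where the statement departs from the classical (non-conservative) Peixoto picture.

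For \textbf{necessity} I would argue by contraposition. If $v$ has a degenerate singular point, then $\psi$ fails to be Morse there, and a $C^r$-small perturbation of $\psi$ localized near that point changes the number and indices of the critical points (a degenerate critical point splits or cancels). Since the numbers of centers and saddles, as well as the distinction between them, are invariants of the orbit structure, no orbit-preserving homeomorphism to the perturbed field can exist, so $v$ is not structurally stable. If instead some saddle is not self-connected, there is a separatrix joining two distinct saddles $p\neq q$; as it lies on a single level set we must have $\psi(p)=\psi(q)$, and a $C^r$-small perturbation of $\psi$ shifting these two critical values apart breaks the connection. The existence of a saddle--saddle connection is likewise preserved by orbit-preserving homeomorphisms, so the portrait genuinely changes and structural stability again fails.

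For \textbf{sufficiency}, assume $\psi$ is Morse and every saddle is self-connected. I would proceed in three steps. First, \emph{persistence of the singular set}: since $\psi$ is Morse, every $C^1$-small Hamiltonian perturbation keeps the same critical points, moving them continuously and preserving their index, so saddles remain saddles and centers remain centers. Second, \emph{persistence of the skeleton}: every separatrix is a homoclinic loop issuing from and returning to a single saddle, hence automatically confined to one critical value, so these loops persist as homoclinic loops and $\Sigma$ is carried to the skeleton $\Sigma'$ of the perturbed field with an identical combinatorial configuration. Third, \emph{cell decomposition and gluing}: $\T^2\setminus\Sigma$ is a finite union of periodic annuli, and one constructs the desired homeomorphism by first matching $\Sigma$ to $\Sigma'$ (flow-box theorem along regular separatrix arcs, a local normal form near each saddle and each center) and then extending across each periodic annulus so that its foliation by periodic orbits is carried to the corresponding one, finally gluing these pieces along the shared separatrix boundaries. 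The step I expect to be the genuine obstacle is this last global assembly: the local normal forms are classical, but producing a \emph{single} homeomorphism of $\T^2$ that simultaneously matches the skeletons, respects every periodic-orbit foliation, and is consistent on all common boundaries demands careful bookkeeping of the combinatorial type of $\Sigma$ and of the non-contractible annuli, plus a compactness argument to control the period functions across each annulus.

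Finally, the \textbf{density and openness} assertion I would derive as a corollary of the same analysis. Morse functions are dense in $C^r$, and among them those with pairwise distinct critical values are dense (separate any coinciding values by an arbitrarily small perturbation); by the level-set argument, distinct critical values preclude heteroclinic connections, so such fields are exactly the structurally stable ones, which gives density. Openness follows because being Morse is an open condition and, once all separatrices are homoclinic loops, they persist under small perturbations while no new saddle--saddle connection can form, so the structurally stable set contains a $C^r$-neighborhood of each of its points.
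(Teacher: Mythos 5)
The paper does not prove this theorem: it is quoted verbatim from Ma and Wang \cite{MW} (see also \cite{MWbook}), so there is no in-paper argument to compare yours against. Judged on its own terms, your outline follows the standard route of \cite{MW}: reduce everything to the level sets of the single-valued stream function, observe that all non-periodic dynamics lives on the separatrix skeleton, break heteroclinic saddle connections by shifting critical values, and assemble the conjugating homeomorphism annulus by annulus. The overall architecture is sound and is essentially the published one.

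Two steps are genuinely incomplete as written. First, in the necessity direction you claim that a small perturbation near a degenerate critical point ``changes the number and indices of the critical points''; this is not automatic. For instance $\psi=x^4+y^4$ perturbed by $\varepsilon(x^2+y^2)$ keeps a single centre and a topologically unchanged portrait, so you must exhibit a \emph{specific} perturbation that does change the portrait (e.g. adding $-\varepsilon x^2$, which splits the extremum into two centres and a saddle) and argue that such a splitting is available at an arbitrary degenerate singular point of a $C^r$ stream function; this is where the real work in \cite{MW} lies. Second, in the sufficiency direction, the fact that a separatrix of the perturbed saddle $p'$ lies on the critical level of $p'$ does not by itself force it to return to $p'$: you need the compactness/level-set argument (the connected component of $\{\tilde\psi=\tilde\psi(p')\}$ through $p'$ is a compact graph whose only vertex is $p'$ once critical values stay separated, so every edge is a homoclinic loop), or a shadowing argument along the unperturbed loop. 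Finally, a small overclaim: Morse stream functions with pairwise distinct critical values are structurally stable, but they are not ``exactly'' the structurally stable ones (two self-connected saddles may share a critical value); this does not affect your density argument, only its phrasing.
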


\subsection{Building blocks in 2D: Taylor vortices}\label{Subsec:TaylorV} For the two-dimensional case we need to introduce a class of vector fields called {\em Taylor vortices}. We refer to \cite{MWbook} for more details on the following. We consider the eigenvectors of the following Stokes problem
\begin{equation}\label{eq:eigen-stokes}
\begin{cases}
-\Delta \mathcal V=\lambda  \mathcal V,\\
 \mathcal V=\nabla^{\perp} \psi.
\end{cases}
\end{equation}
Provided that $\lambda\in \N$, for any couple of integers $n\geq 1,m\geq 0$ we can easily construct a solution of \eqref{eq:eigen-stokes} with $\lambda=n^2+m^2$ in the following way 
\begin{align*}
 \mathcal V_{nm}^1 &= (m\sin nx_1 \cos mx_2, -n\cos n x_1 \sin m x_2), &
 \mathcal V_{nm}^2 & = (m\cos nx_1\sin mx_2, -n\sin nx_1 \cos m x_2),\\
 \mathcal V_{nm}^3 & = (m\cos nx_1\cos mx_2, n\sin n x_1 \sin m x_2), &
 \mathcal V_{nm}^4 & = (m\sin n x_1 \sin m x_2, n\cos n x_1\cos m x_2),\\
 \mathcal V_{n}^1 & = (\sin n x_2,0),\hspace{0.5cm}  \mathcal V_{n}^2  = (\cos n x_2, 0), &
 \mathcal V_{n}^3 & = (0,\sin n x_1),\hspace{0.5cm}  \mathcal V_{n}^4  = (0, \cos n x_1).
\end{align*}
Denote by $\mathbb{V}_{nm} = \mathrm{span} \{ \mathcal V_{nm}^1, \mathcal V_{nm}^2,  \mathcal V_{nm}^3,  \mathcal V_{nm}^4 \}$ and $\mathbb{V}_{n} = \mathrm{span} \{ \mathcal V_{n}^1,  \mathcal V_{n}^2,  \mathcal V_{n}^3,  \mathcal V_{n}^4 \}$. Then, by varying $n,m$, these spaces generate all the
(zero-average) solutions of \eqref{eq:eigen-stokes}.

It is important to note that the vector fields $ \mathcal V_{nm}$ (and $ \mathcal V_n$ respectively) are stationary solution of the Euler equations
for a suitable choice of the pressure. For instance we have
\begin{equation}
\begin{cases}
( \mathcal V^1_{nm} \cdot \nabla)  \mathcal V^1_{nm} = \nabla P^1_{nm},\\
\dive  \mathcal V^1_{nm}=0,
\end{cases}
\end{equation}
with pressure given by
$$
P^1_{nm} = m^2 (\sin (nx_1))^2 + n^2 (\sin (m x_2))^2.
$$

Crucially, all vector fields in $\mathbb{V}_{nm}$ are not Hamiltonian structurally stable, because they are not saddle self-connected, see Figure \ref{fig:v22}. However, they may have two kind of topological structure, see \cite[Thorem 4.5.3]{MWbook}. 

\begin{figure}[H]
\centering
\includegraphics[width=0.5\textwidth]{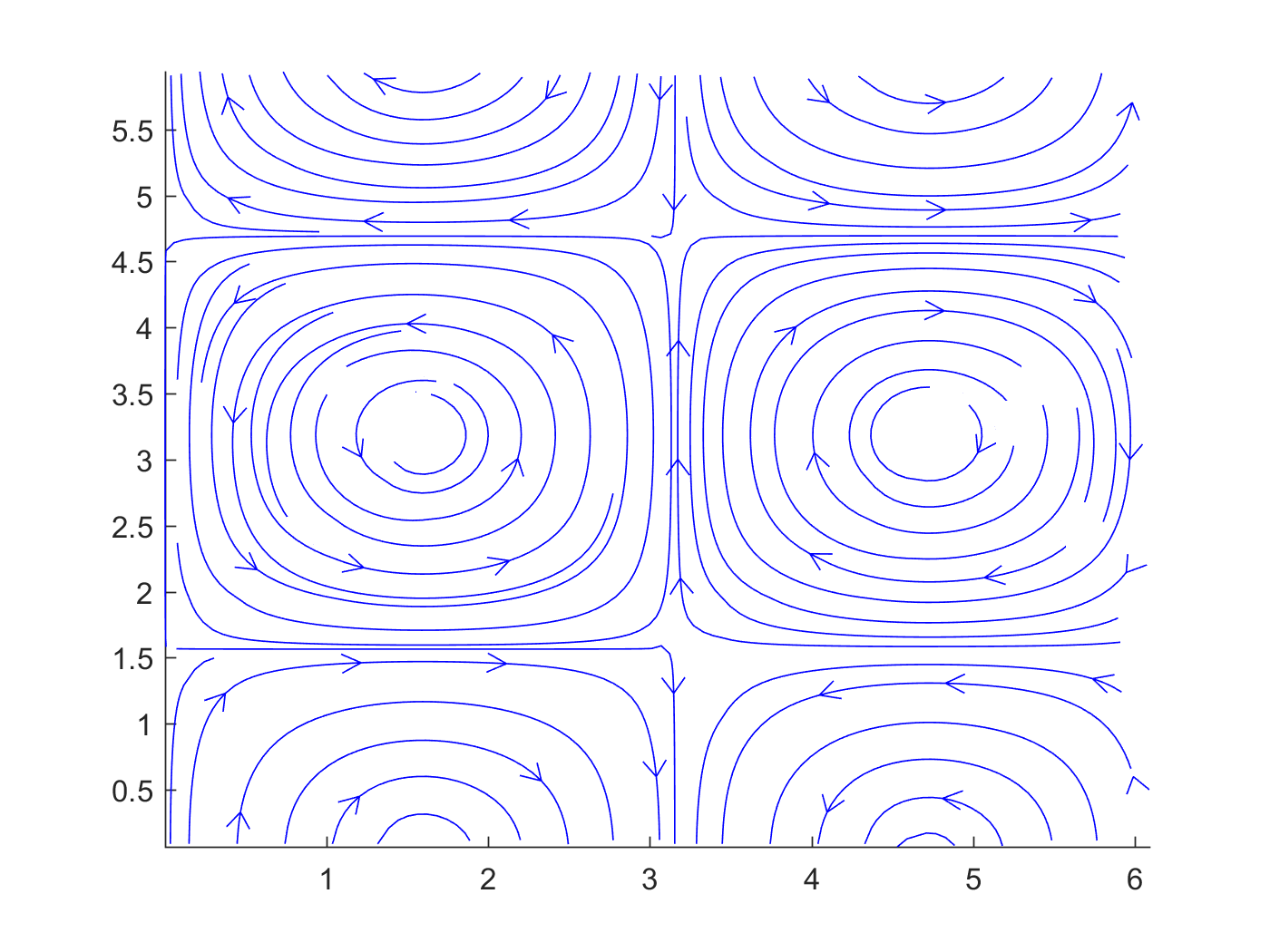}
\caption{Phase diagram of the vector field $\mathcal{V}^4_{11}$.}
\label{fig:v22}
\end{figure}

Let us recall that the set of Hamiltonian structurally stable vector field is open and dense in the set of Hamiltonian vector field. This means that, for any given $ \mathcal V_{nm}$ one can find a neighborhood $\mathcal{U}$, w.r.t. the topology in $C^1$, such that if $ \mathcal V \in \mathcal{U}$ then $ \mathcal V$ is a Hamiltonian structurally stable vector field, see \cite[Theorem 3.3.2]{MWbook}. This in particular means that by perturbing $ \mathcal V_{nm}$ it is possible to break 
heteroclinic saddle connections. 
This is suggested by the fact that for Hamiltonian perturbations the Melnikov function associated to an 
heteroclinic saddle connection 
may be non zero, while it has to be zero in the case of homoclinic saddle connection (since the perturbation is autonomous,
the Melnikov function is indeed a constant in this cases).
A rigorous mechanism to break heteroclinic saddle connections has been proposed in \cite{MW}.
The same holds for the space $\mathbb{V}_n$ with $n>1$, while in $\mathbb{V}_1$ there are vector fields which are structurally stable. An example of a stable Taylor field is given by
\begin{equation}\label{def:v1}
V_1=\left(\sin x_2,\frac{1}{2}\sin x_1\right),
\end{equation}
whose phase diagram can be found in Figure \ref{fig:v1}. In particular, note that the structural stability follows from 
the absence of heteroclinic connections. The field $V_1$ has indeed two saddle points which 
are not connected by any integral line. It is also important for us that 
$V_1$ solves the stationary Euler $(V_1 \cdot \nabla )V_1 = \nabla P$ equation with pressure 
$P = \frac12 \cos x_1 \cos x_2$.

\begin{figure}[H]
\centering
\includegraphics[width=0.5\textwidth]{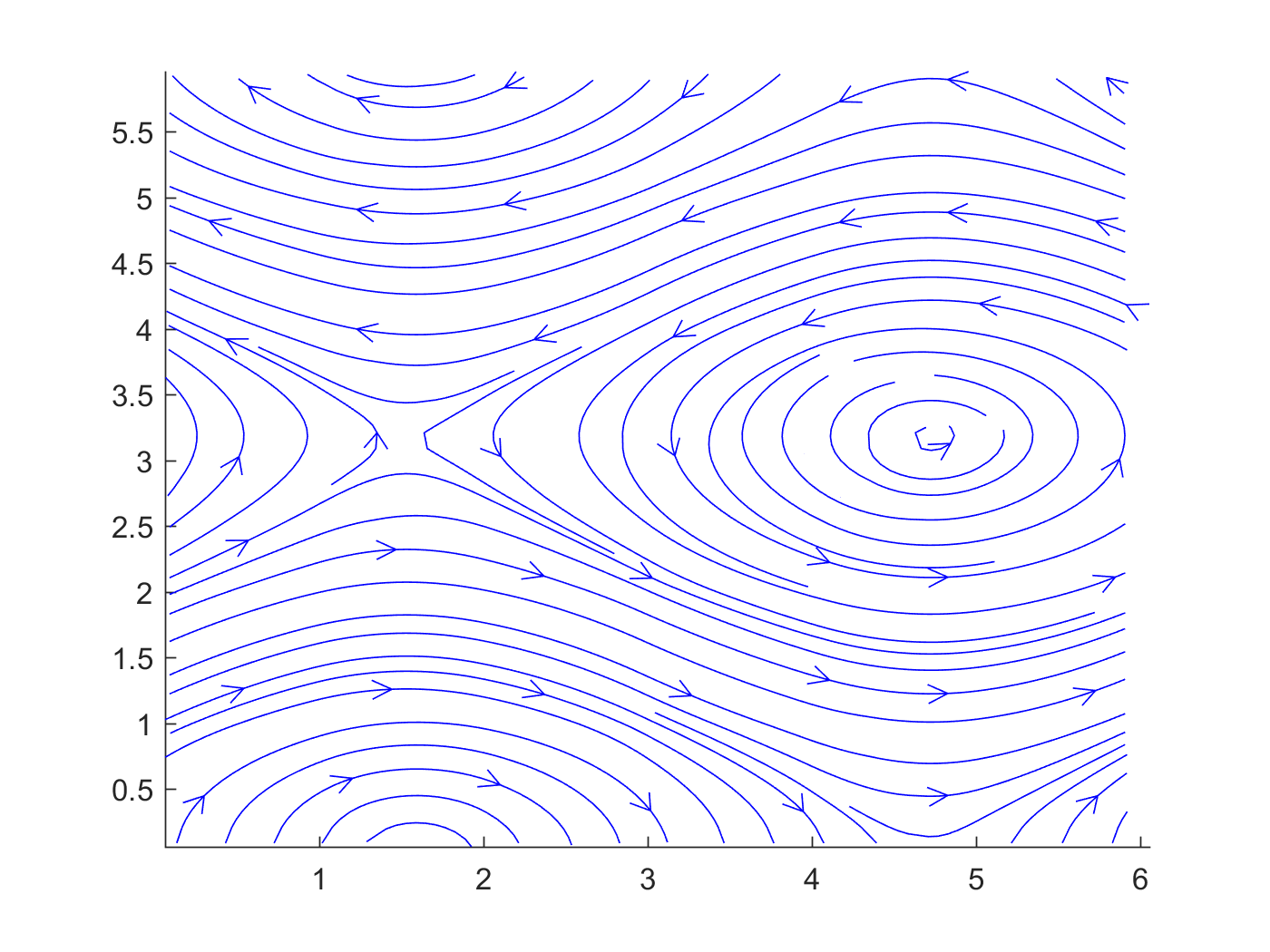}
\caption{Phase diagram of the vector field $V_{1}$.}
\label{fig:v1}
\end{figure}

Consider $n,m\geq 1$. We set $N^2 := n^2+m^2$ (note that here $N$ may be not integer). 
We denote by $ V_N =  \mathcal V_{nm}^1$. In the rest of the paper we will focus, to fix ideas, on this particular 
subfamily of Taylor fields with eigenvalue $N^2$, however we could similarly consider any other 
Taylor fields with eigenvalue $N^2$ with straightforward modifications of the arguments.
The critical points of $V_N$ are given by
\begin{equation}\label{CritPointsAre}
x^* := \left(\frac{k_1\pi}{n},\frac{k_2\pi}{m}\right)\hspace{0.5cm}\mbox{and}
\hspace{0.2cm} \bar{x} := \left(\frac{\pi}{2n}+\frac{k_1\pi}{n},\frac{\pi}{2m}+\frac{k_2\pi}{m}\right),
\end{equation}
with $k_1=1,...,2n$ and $k_2=1,...,2m$ (with a small abuse of notation we omit the dependence of $x^*$ and 
$\bar{x}$ on $k_1, k_2$, that will be however irrelevant). By computing the gradient of $V_N$ we obtain that
\begin{equation}
\nabla V_N=\begin{pmatrix}
nm\cos(nx_1)\cos(mx_2) & -m^2\sin(nx_1)\sin(mx_2)\\
n^2\sin(nx_1)\sin(mx_2) & -nm \cos(nx_1)\cos(mx_2)
\end{pmatrix}
\end{equation}
Thus at the critical points we have
\begin{equation}\label{Hyp/Ell}
\nabla V_N(x^*) = \pm \begin{pmatrix}
nm & 0\\
0 & -nm 
 \end{pmatrix}
\hspace{0.5cm}\mbox{or }\hspace{0.2cm}
\nabla V_N(\bar{x}) = \pm \begin{pmatrix}
0 & -m^2\\
n^2 & 0,
\end{pmatrix}
\end{equation}
and
$\nabla V_N$ is always invertible in the critical points with determinant $\pm \,n^2m^2$. 
Note that the $x^*$ are hyperbolic critical points, while the $\bar{x}$ are elliptic ones.

It is also worth to note that the distance between critical points can be controlled by below 
with~$\frac{C}{N}$ (this is clear by \eqref{CritPointsAre}). Moreover, a straightforward computation on the norms 
\begin{equation}\label{scalingvn}
\|v_N\|_{C^k}=c_k N^{k+1}. 
\end{equation}

We conclude this section with the following lemma, which investigate the stability of the critical points of 
our Taylor fields under small perturbations. If we do not take into account the quantitative bound \eqref{LowerBoundCP} 
for $\delta_0(N)$ the
content of the lemma is an immediate consequence of 
the implicit function theorem, as the linearization of $V_N$ is invertible at the critical points. However, 
it will be crucial for us to quantify the size of the perturbative parameter $\delta$, since in our applications 
it will not be possible to have $\delta$ too small (compared to a certain function of the frequency $N$). 
For instance, if in the next lemma we would allow the choice $\delta_0(N) = e^{-C N^2}$ with $C \gg 1$, 
we could not use it anymore in the proof of
Theorem \ref{thm:main}.  
 This is why in the 
proof below we run again the implicit function theorem machinery for this particular example.

\begin{lem}\label{Lemma:CriticalPoints}
Let $V_N$ be the Taylor field with eigenvalue $N^2=n^2+m^2$ defined above and let $ W \in C^1$. 
%
For all $N$ sufficiently large (depending only on $\| W \|_{C^1}$), there exists $\delta_0=\delta_0(N)$ such that the vector field $\tilde{V}:\T^2\to\R^2$ defined as
\begin{equation}
\tilde{V}(x):=V_N(x)+\delta  W(x),
\end{equation}
has at least $8nm$ regular critical points for every $|\delta|<\delta_0(N)$. We may choose 
\begin{equation}\label{LowerBoundCP}
\delta_0(N) =  N^{-L},
\end{equation} where~$L$ is a fixed large integer. 
\end{lem}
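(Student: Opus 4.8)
The plan is to run a quantitative version of the implicit function theorem, localized around each of the $8nm$ critical points of $V_N$ listed in \eqref{CritPointsAre}, using the contraction mapping principle with every constant tracked explicitly in $N$. Fix one such critical point $x_0$ (either a hyperbolic $x^*$ or an elliptic $\bar x$) and set $A := \nabla V_N(x_0)$, which is invertible by \eqref{Hyp/Ell}. Solving $\tilde V(x)=0$ near $x_0$ is, after putting $y := x-x_0$, equivalent to the fixed-point equation
\[
y = F(y) := -A^{-1}\big[\, R(x_0+y) + \delta\, W(x_0+y)\,\big], \qquad R(x) := V_N(x) - A(x-x_0),
\]
where $R$ is the Taylor remainder, so that $R(x_0)=0$ and $\nabla R(x_0)=0$. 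I would show that $F$ maps a small ball $\{|y|\le \rho\}$ into itself and is a contraction there.

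The first and most important estimate is the bound on $A^{-1}$. Reading the singular values off \eqref{Hyp/Ell} --- namely $nm,nm$ at the hyperbolic points and $n^2,m^2$ at the elliptic ones --- and using $n,m\ge 1$, one obtains the uniform bound $\|A^{-1}\|\le 1$ together with $\sigma_{\min}(A)\ge 1$ for the smallest singular value. \emph{This is the crucial point}: although $A$ itself is as large as $\sim N^2$, its determinant $\pm n^2m^2\gtrsim N^2$ exactly compensates the size of the entries, so $\|A^{-1}\|$ does not grow with $N$. The remaining estimates are routine consequences of the scaling \eqref{scalingvn}: since $R(x_0)=0$, $\nabla R(x_0)=0$ and $\|\nabla^2 V_N\|_\infty \le \|V_N\|_{C^2} = c_2 N^3$, Taylor's theorem gives $|R(x_0+y)|\le CN^3|y|^2$ and $\|\nabla R(x_0+y)\|\le CN^3|y|$ for $|y|\le\rho$, while $|W|,\|\nabla W\|\le \|W\|_{C^1}$.

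Substituting into $F$, the self-map condition becomes $CN^3\rho^2 + |\delta|\,\|W\|_{C^1}\le \rho$ and the contraction condition becomes $CN^3\rho + |\delta|\,\|W\|_{C^1}\le \tfrac12$. Choosing $\rho := c\,N^{-3}$ with $c$ a small absolute constant controls the $R$-terms in both (indeed $CN^3\rho^2 = Cc\,\rho$ and $CN^3\rho = Cc$), leaving only the smallness requirement $|\delta|\,\|W\|_{C^1}\lesssim \rho = cN^{-3}$ on $\delta$. This holds as soon as $|\delta|<\delta_0(N):=N^{-L}$ with $L$ a fixed large integer as in \eqref{LowerBoundCP}, \emph{provided} $N$ is large enough depending only on $\|W\|_{C^1}$ (so that $N^{L-3}\ge 2\|W\|_{C^1}/c$); this is precisely the threshold on $N$ asserted in the statement. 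The contraction principle then yields, for each $x_0$, a unique zero $x=x_0+y$ of $\tilde V$ with $|y|\le \rho = cN^{-3}$.

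Finally I would collect the conclusions. The $8nm$ balls are pairwise disjoint, since by \eqref{CritPointsAre} distinct critical points of $V_N$ are separated by at least $C/N \gg N^{-3}$; hence the zeros produced above are $8nm$ \emph{distinct} critical points of $\tilde V$. Each is regular: at the found point $\nabla\tilde V = A + E$ with $\|E\| \le CN^3\rho + |\delta|\,\|W\|_{C^1} < \tfrac12 < \sigma_{\min}(A)$ once $c$ is small, so $\nabla\tilde V$ is invertible there. The main obstacle --- and the whole reason for re-running the implicit function theorem by hand rather than quoting it as a black box --- is keeping $\delta_0$ only \emph{polynomially} small: this hinges entirely on the boundedness $\|A^{-1}\|\le 1$ identified above, whereas any careless estimate that let $\|A^{-1}\|$ or the remainder constants grow with $N$ would force an exponentially small $\delta_0$, which neither \eqref{LowerBoundCP} nor the later application in Theorem \ref{thm:main} could tolerate.
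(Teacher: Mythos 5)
Your proposal is correct and follows essentially the same route as the paper: a quantitative contraction-mapping implementation of the implicit function theorem around each of the $8nm$ critical points in \eqref{CritPointsAre}, with the decisive observation that the inverse of the linearization stays bounded in $N$ (you phrase this as $\sigma_{\min}(\nabla V_N(x_0))\geq 1$ via \eqref{Hyp/Ell}, the paper as $|\det\nabla_x F|\gtrsim n^2m^2$), which is exactly what keeps $\delta_0$ only polynomially small. Your bookkeeping with the Taylor remainder and the radius $\rho=cN^{-3}$ is a clean, slightly tidier variant of the paper's choice $\rho_0=CN^{-L}$, and the disjointness and regularity checks match the paper's.
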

\begin{proof}
First of all, note that the critical points of $\tilde{V}$ are zeros of the vector-valued function $F:\T^2\times\R\to\R^2$ defined as
\begin{equation}\label{DefFPerturbation}
F(x,\delta)=V_N(x)+\delta W(x).
\end{equation}
At the hyperbolic critical points $x^*$ we have (see \ref{Hyp/Ell}) 
$$
F(x^*,0)=0,\hspace{0.2cm}\mbox{and} \hspace{0.2cm} \nabla_x F(x^*,0)\hspace{0.2cm} \mbox{is invertible}
$$
and the same is true at the elliptic critical points. 
We can thus apply the implicit function theorem to the function $F$ in a neighborhood of $x^*$ (or $\bar{x}$): 
this implies that here exists $\delta_0=\delta_0(N)>0$ and a (smooth) curve $x:[-\delta_0,\delta_0]\to\T^2$ with $x(0)=x^*$ 
(or $x(0)=\bar x$) such that $F(x(\delta),\delta)=0$ for all $|\delta|<\delta_0$. We must now quantify the value of 
$\delta_0$ to be as in \eqref{LowerBoundCP}. This will imply, in particular, that the
the new critical points $x(\delta)$, obtained with different choices 
of $x^*$ and $\bar{x}$, do not collapse into each others.

We first consider the hyperbolic points. Note that
\begin{equation}\label{Hyp/EllDelta}
\nabla_x F(x^*, \delta) = \pm
\begin{pmatrix}
 nm & 0\\
0 & - nm 
 \end{pmatrix}
+ \delta \nabla W(x^*), 
 \end{equation}
thus 
\begin{equation}\label{FirstMathcalO}
\det \nabla_x F(x^*, \delta)  = 
 - n^2 m ^2 + \mathcal{O}(\delta nm),
\end{equation}
and so 
$$
 | \det \nabla_x F(x^*, \delta) | > \frac{N^2}{2},
$$
for $|\delta| < \delta_0$ with $\delta_0$ sufficiently small. Here we used that $ N \lesssim nm \lesssim  N^2$.
Hereafter all the $\mathcal{O}$ may also depend on $\| W \|_{C^1}$. For instance, 
in \eqref{FirstMathcalO} we should have written $\mathcal{O}(\delta nm \|W\|_{C^1})$. 
However we will always omit the dependence to simplify the notations.

This allows us to define a suitable of map $\Phi$ such that the  
the curve $x(\delta)$ will be constructed as the fixed point of this map. 
Let denote with $\Gamma_{\rho_0, \delta_0}(x^*, 0)$ the set of all the curves 
$$
 \delta\in [-\delta_0, \delta_0] \to \gamma (\delta) \in B_{\rho_0}(x^*), 
$$
where $B_{\rho_0}(x^*)$ is the ball centered at $x^*$ with radius $\rho_0 >0$ (to be fixed).
We endow $\Gamma_{\rho_0, \delta_0}(x^*, 0)$ with the distance 
$$\text{dist} (\gamma, \gamma' ) = \sup_{|\delta| < \delta_0} | \gamma(\delta) - \gamma'(\delta) |;$$
this will give us s continuous curve as fixed point. We could consider a distance which takes into 
account also the derivative of $\gamma$
to show that the fixed point is a $C^1$ curve (in  fact is $C^k$ if $W \in C^k$), 
however this will be unessential for our purposes. 
We define
$$
\Phi : \gamma(\delta) \in \Gamma_{\rho_0, \delta_0}(x^*, 0) 
 \to \Phi (\gamma(\delta)) := \gamma(\delta) - \left( \nabla_x F(x^*, \delta) \right)^{-1} F(\gamma(\delta), \delta).
$$
We will show that $\Phi$ is a contraction, so that its fixed point $x(\delta)$ satisfies 
$F(x(\delta), \delta) =0$, as claimed.
We define (with a small abuse of notation) for $x \in  B_{\rho_0}(x^*)$
$$
\Phi(x, \delta) = x - \left( \nabla_x F(x^*, \delta) \right)^{-1} F(x, \delta).
$$  
Note that
$$
\nabla_x \Phi(x, \delta) = \mbox{Id} - \left( \nabla_x F(x^*, \delta) \right)^{-1}  \nabla_x F(x, \delta)
$$
Moreover, expanding the sine and cosine we get 
\begin{equation}\label{BeforeLHalf}
\nabla_x F(x, \delta) = \pm \begin{pmatrix}
nm + \mathcal{O}(N^{4-L} ) & \mathcal{O}(N^{2-2L}) + \mathcal{O}(\delta) \\
\mathcal{O}(N^{2-2L}) + \mathcal{O}(\delta)  & nm + \mathcal{O}(N^{4-L} ) 
 \end{pmatrix}
 \end{equation}
for all $x$ such that $|x-x^*| < C N^{-L}$, namely taking 
\begin{equation}
\rho_0 = C N^{-L},
\end{equation} 
where the constant $C$ that depends only on $\|W\|_{C^1}$ will be chosen later.
Note that this choice of $\rho_0$ 
automatically implies that the new critical points that we found do not collapse into each other, 
(taking for instance $L\geq 2$) for all $N$ sufficiently large (depending on $L, \|W\|_{C^1}$.)
Thus the perturbed vector field has more that $8nm$ (regular) critical points.   

Now, fixing $\delta_0(N) = N^{-L}$ and taking $L$ sufficiently large, we rewrite 
\eqref{BeforeLHalf} as
\begin{equation}\label{FinalBeforeInv}
\nabla_x F(x, \delta) = \pm \begin{pmatrix}
nm + \mathcal{O}(N^{-L/2} ) & \mathcal{O}(N^{-L/2})  \\
\mathcal{O}(N^{-L/2}) +   & nm + \mathcal{O}(N^{-L/2} ) 
 \end{pmatrix},
\end{equation}
and we can compute 
\begin{equation}\label{LipCostContr}
\nabla_x \Phi(x, \delta) = \begin{pmatrix}
1 + \Omega^{-1} ( n^2m^2 + \mathcal{O}(N^{-L/3}))  & \mathcal{O}(N^{-L/3}) \\
\mathcal{O}(N^{-L/3}) & 1 + \Omega^{-1} (  n^2m^2 + \mathcal{O}(N^{-L/3}) )  
 \end{pmatrix},
 \end{equation}
where 
and 
$$
\Omega:=  - n^2 m ^2 + \mathcal{O}(N^{-L/3}) .
$$
Then, noting that 
$$
\frac{n^2m^2}{\Omega} = - 1 + \mathcal{O}(N^{-L/3}),
$$
we see that, for all 
$N$ sufficiently large 
\begin{equation}\label{ContrHp1}
| \nabla_x \Phi(x, \delta) | \ll 1.
\end{equation}
This implies that $\Phi$ contracts the distances. To show that 
$\Phi$ is a contraction it now suffices to prove~\eqref{BallToItself} below, that we will deduce by 
\begin{equation}\label{ContrHp2}
|  \Phi(x^*, \delta) - x^*   | \ll \rho_0;
\end{equation}
namely the constant curve 
$\gamma(t) : [-\delta_0, \delta_0] \to x^*$ does not change too much under $\Phi$. Indeed, 
combining~\eqref{ContrHp1} and~\eqref{ContrHp2} and using the triangle inequality, we readily see 
that \begin{equation}\label{BallToItself}
\Phi \left( \Gamma_{\rho_0, \delta_0}(x^*, 0) \right) \subset \Gamma_{\rho_0, \delta_0}(x^*, 0).
\end{equation}

On the other hand  
$$
\Phi(x^*, \delta) - x^* = - \left( \nabla_x F(x^*, \delta) \right)^{-1} F(x^*, \delta),
$$
and 
$$
F(x^*,\delta)= \delta W(x^*);
$$
recall \eqref{DefFPerturbation} and $V_N(x^*) =0$.
Thus we can use \eqref{FinalBeforeInv} to estimate  
\begin{align}\nonumber
|  \Phi(x^*, \delta) - x^*  | & = 
 \left| \frac{1}{\Omega} 
 \begin{pmatrix}
nm + \mathcal{O}(N^{-L/2} ) & \mathcal{O}(N^{-L/2})  \\
\mathcal{O}(N^{-L/2}) +   & nm + \mathcal{O}(N^{-L/2} ) 
 \end{pmatrix} \begin{pmatrix}
\delta W_1(x)   \\
\delta W_2(x)
 \end{pmatrix} 
 \right| 
\\ \nonumber
& \qquad \qquad \lesssim  |\delta| \| W \|_{C^1} \leq |\delta| \| W \|_{C^1} \ll \rho_0,
\end{align}
which lead to \eqref{ContrHp2} for all $N$ sufficiently large. In the last inequality we used that 
$\delta_0 = N^{-L}$ and $\rho_0 = C N^{-L}$ and we have taken the constant $C$ sufficiently large 
(in fact a large multiple of $\| W \|_{C^1}$).
This conclude the argument for the hyperbolic critical points.

For the elliptic critical points we proceed in the same way. Here we sketch the relevant calculations. First of all, we have that
\begin{equation}
\nabla F(\bar{x},\delta)= \pm \begin{pmatrix}
0 & - m^2 \\
 n^2  & 0
\end{pmatrix} + \delta W,
\end{equation}
and, as above, we get
thus 
\begin{equation}\label{FirstMathcalO}
\det \nabla_x F(\bar{x}, \delta)  = 
 - n^2 m ^2 + \mathcal{O}(\delta nm) ,
\end{equation}
and so 
$$
 | \det \nabla_x F(\bar{x}, \delta) | > \frac{N^2}{2}.
$$
Defining $\Phi$ as 
$$
\Phi(x, \delta) = x - \left( \nabla_x F(\bar{x}, \delta) \right)^{-1} F(x, \delta),
$$
we can show that $\nabla_x \Phi(x, \delta)$ has also the form \eqref{LipCostContr} and that it contract the 
distances. Then essentially the same calculations as above 
allows us to prove the analogous of \eqref{ContrHp2}, with $x^*$ replaced by $\bar{x}$, and to conclude with the same computations as in the previous case.
\end{proof}

\section{Stability of the MHD system in 2D}\label{Sec:2dStab}
In this section we prove some preliminary results on the system \eqref{eq:mhd} in the two-dimensional setting.

\subsection{Boundedness of the $H^r$ norms} We start by proving an a priori estimate for the the Sobolev norms of the solution. This result (Theorem \ref{thm:boundhr}) will provide us some useful estimates in order to prove Theorem \ref{Lem:HrDecay}, which quantifies the decay of the velocity as the viscosity becomes large. Moreover, a perturbative version of Theorem \ref{thm:boundhr}, namely Theorem \ref{thm:boundhrStab}, will be used directly in the proof of the 2D magnetic reconnection in Theorem \ref{thm:main}.

\begin{rem}
While the propagation of the $H^r$ regularity for 2D solutions is not surprising, the relevant information here is the fact that we control the growth exponentially in the~$L^2$ norms while only polynomially in the higher order Sobolev norms. 
\end{rem}

Define 
$
\Gamma := 1 + \|u_0\|_{L^2}^2+\|b_0\|_{L^2}^2
$
and take $\sigma < \min ( \nu,\eta )$. 

\begin{thm}\label{thm:boundhr}
Let $u_0,b_0\in H^r(\T^2)$ be two divergence-free vector fields with zero mean. Assume $$\| u_0 \|_{H^m} + \| b_0 \|_{H^m} \leq C N^m, \qquad m=0, \ldots, r,$$ for some $N > 1$. Let $(u,b,p)$ be the unique solution \eqref{eq:mhd} with initial datum $(u_0,b_0)$, then for all~$\kappa < 2$
\begin{equation}\label{est:HrWD}
\|u(t,\cdot)\|_{H^r}^2+\|b(t,\cdot)\|_{H^r}^2 + \kappa \nu \int_0^t\|\nabla^{r+1}u(s,\cdot)\|_{L^2}^2\de s
+ \kappa \eta \int_0^t\|\nabla^{r+1}b(s,\cdot)\|_{L^2}^2\de s\leq  
N^{2r}
 e^{\frac{C\Gamma}{\sigma^2}},
\end{equation}
and
\begin{equation}\label{est:Hr}
\|u(t,\cdot)\|_{H^r}^2+\|b(t,\cdot)\|_{H^r}^2
\leq 
 N^{2r}  e^{-2\sigma t} e^{\frac{C \Gamma}{\sigma^2}}.
\end{equation}
where the implicit constants $C$ depend on $\sigma, \kappa, r$.
\end{thm}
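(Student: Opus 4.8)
I will prove both bounds simultaneously by induction on the order $s$ of the energies $e_s(t):=\|u(t,\cdot)\|_{H^s}^2+\|b(t,\cdot)\|_{H^s}^2$, following the scheme of Theorem~\ref{thm:stab} but tracking carefully that the only quantity which ends up in the exponential is a purely $L^2$, $N$-independent one. For the base case $s=0$ I test the two equations of \eqref{eq:mhd} against $u$ and $b$; the nonlinear terms cancel after integration by parts (the same two identities used in Theorem~\ref{thm:stab}), giving $\tfrac12\tfrac{\de}{\de t}e_0=-\nu\|\nabla u\|_{L^2}^2-\eta\|\nabla b\|_{L^2}^2$. Poincar\'e's inequality for zero-mean fields yields $\tfrac{\de}{\de t}e_0\le -2\sigma e_0$, hence $e_0(t)\le \Gamma\,e^{-2\sigma t}$, while integrating the equality in time produces the single estimate in which $\Gamma$ enters, namely $\int_0^\infty(\nu\|\nabla u\|_{L^2}^2+\eta\|\nabla b\|_{L^2}^2)\,\de s\le\tfrac12\Gamma$.

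For the inductive step I fix $\alpha$ with $|\alpha|=r$, apply $\partial^\alpha$ to both equations, test against $\partial^\alpha u$ and $\partial^\alpha b$ and add. The top-order transport terms cancel as in 3D, leaving the dissipation $\nu\|\nabla^{r+1}u\|_{L^2}^2+\eta\|\nabla^{r+1}b\|_{L^2}^2$ together with a sum of commutator and Lorentz/induction terms of the schematic shape $\int\partial^\beta f\,(\nabla\partial^{\alpha-\beta}g)\,\partial^\alpha h$, with $f,g,h\in\{u,b\}$ and $1\le|\beta|\le r$. Here I would use the 2D interpolation inequalities (Ladyzhenskaya and Gagliardo--Nirenberg) recalled in the preliminaries to split each term, after a Young inequality, into three pieces: a piece $\e(\|\nabla^{r+1}u\|_{L^2}^2+\|\nabla^{r+1}b\|_{L^2}^2)$ absorbed into the dissipation; a piece of the form $\tfrac{C}{\e}(\|\nabla u\|_{L^2}^2+\|\nabla b\|_{L^2}^2)\,e_r$; and genuinely lower-order contributions $F_r(t)\ge 0$ built from $e_s$, $s<r$, and their gradients. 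Choosing $\e$ a fixed small multiple of $\sigma$, so that for the prescribed $\kappa<2$ a fraction $\kappa$ of the dissipation survives, I reach
\[
\tfrac{\de}{\de t}e_r+\kappa\big(\nu\|\nabla^{r+1}u\|_{L^2}^2+\eta\|\nabla^{r+1}b\|_{L^2}^2\big)\le g(t)\,e_r+F_r(t),\qquad g(t)=\tfrac{C}{\sigma}\big(\|\nabla u\|_{L^2}^2+\|\nabla b\|_{L^2}^2\big).
\]
Crucially, in 2D this closes without the supercritical cubic term $e_r^3$ present in the 3D estimate, so no smallness of the data is required.

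To conclude, combining the base-case dissipation bound with $\nu,\eta>\sigma$ gives $\int_0^\infty g\,\de s\le\tfrac{C}{\sigma^2}\Gamma$, which is exactly where the exponent $C\Gamma/\sigma^2$ is born: one factor $1/\sigma$ comes from the Young constant with $\e\sim\sigma$, and one from turning the dissipation integral into a plain gradient integral. For \eqref{est:HrWD} I keep the dissipation on the left and integrate against the factor $e^{-\int_0^t g}$; using the inductive control of $F_r$ (whose time integral is $\lesssim N^{2r}e^{C\Gamma/\sigma^2}$) together with $e_r(0)\le CN^{2r}$, I bound both $e_r(t)$ and the accumulated dissipation by $N^{2r}e^{C\Gamma/\sigma^2}$. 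For the decay estimate \eqref{est:Hr} I instead spend part of the dissipation through (iterated) Poincar\'e, so that $\|\nabla^{r+1}u\|_{L^2}^2$ dominates $e_r$ up to the constant $r+1$, replacing the left-hand dissipation by $2\sigma e_r$; Gronwall then yields the extra factor $e^{-2\sigma t}$, and the decay of the inductively-controlled $F_r$ guarantees that the same decay survives for $e_r$.

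The main obstacle is the bookkeeping in the inductive step: one must distribute the derivatives in each trilinear term so that the factor carrying the top-order derivative is absorbed into the dissipation while its accompanying coefficient collapses to the $N$-independent quantity $\|\nabla u\|_{L^2}^2+\|\nabla b\|_{L^2}^2$, and so that every remaining factor is either a lower-order energy (closed by the induction hypothesis) or again absorbable. Keeping the $N$-dependence strictly polynomial, i.e.\ out of the exponential, is the entire point of the statement, and it is precisely this separation between the $L^2$-controlled coefficient of $e_r$ and the higher-derivative terms that makes the argument work; the MHD cross terms require no new idea beyond the cancellation identities already used in Theorem~\ref{thm:stab}.
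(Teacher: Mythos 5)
Your proposal is correct and follows essentially the same route as the paper: induction on the derivative order, energy estimates with the transport cancellations, Ladyzhenskaya plus Young to absorb the top-order factors into the dissipation, the surviving coefficient of $e_r$ collapsing to $\frac{C}{\sigma}(\|\nabla u\|_{L^2}^2+\|\nabla b\|_{L^2}^2)$ whose time integral is bounded by $C\Gamma/\sigma^2$ via the $r=0$ energy identity, and the intermediate commutator terms closed by the inductive hypotheses \eqref{est:Hr} and \eqref{est:HrWD}. The only differences (folding the separately treated $r=1$ and $r=2$ cases into the general induction, and putting the small parameter $\e\sim\sigma$ rather than $\nu\e$ in the absorbed piece) are cosmetic bookkeeping.
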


When $r=0$ we have of course more precise estimates than \eqref{est:HrWD}, see for instance estimate \eqref{ImprovedR=0} and the energy estimate \eqref{EnergyLast?}. A similar comment applies at least to small values of $r$, however the general form \eqref{est:Hr}, which is enough for our purposes, is well suited to be generalized to the corresponding stability estimate (see Theorem \ref{thm:boundhrStab}). 

\begin{proof}
We divide the proof in several steps.\\
\\
\underline{\em Step 1} \,\,Estimate for $r=0$.\\
\\
By standard arguments, we know that the following identity holds for smooth solutions
\begin{equation}\label{eq:energy-identity}
\frac{1}{2}\frac{\de}{\de t}\int_{\T^2}\left(|u(t,x)|^2+|b(t,x)|^2\right)\de x+\nu\int_{\T^2}|\nabla u(t,x)|^2\de x+\eta \int_{\T^2}|\nabla b(t,x)|^2\de x=0.
\end{equation}
By integrating the above identity in time we obtain that
\begin{equation}\label{EnergyLast?}
\|u(t,\cdot)\|_{L^2}^2+\|b(t,\cdot)\|_{L^2}^2+2\nu\int_0^t\|\nabla u(s,\cdot)\|^2_{L^2}\de s+2\eta\int_0^t\|\nabla b(s,\cdot)\|^2_{L^2}\de s = \|u_0\|_{L^2}^2+\|b_0\|_{L^2}^2,
\end{equation}
which gives the \eqref{est:HrWD} for $r=0$. Moreover, note that if we apply by Poincar\'e's inequality to \eqref{eq:energy-identity} we obtain that
\begin{equation}
\frac{\de}{\de t}\left(\|u(t,\cdot)\|_{L^2}^2+\|b(t,\cdot)\|_{L^2}^2\right)\leq -2\nu\|u(t,\cdot)\|_{L^2}^2-2\eta \|b(t,\cdot)\|_{L^2}^2,
\end{equation}
and by Gronwall's inequality (recall $\sigma < \min (\eta, \nu)$)
\begin{equation}\label{ImprovedR=0}
\|u(t,\cdot)\|_{L^2}^2+\|b(t,\cdot)\|_{L^2}^2\leq \left(\|u_0\|_{L^2}^2+\|b_0\|_{L^2}^2\right) e^{-2\sigma t}.
\end{equation}
\\
\underline{\em Step 2} \,\,Estimate for $r=1$.\\
\\
We differentiate the equations and multiplying the first by $\nabla u$ and the second by $\nabla b$ we obtain that
\begin{align*}
\frac{1}{2}\frac{\de}{\de t}(\|\nabla u(t,\cdot)\|_{L^2}^2&+\|\nabla b(t,\cdot)\|_{L^2}^2)+\nu \|\nabla^2 u(t,\cdot)\|_{L^2}^2+\eta \|\nabla^2 b(t,\cdot)\|_{L^2}^2+\int_{\T^2}\nabla [(u\cdot\nabla)u]:\nabla u\de x\\
&+\int_{\T^2}\nabla [(u\cdot\nabla)b]:\nabla b\de x=\int_{\T^2}\nabla [(b\cdot\nabla)b]:\nabla u\de x+\int_{\T^2}\nabla [(b\cdot\nabla)u]:\nabla b\de x.
\end{align*}
Thanks to the properties of the transport operator we have several cancellations which yield to
\begin{equation}
\frac{1}{2}\frac{\de}{\de t}(\|\nabla u(t,\cdot)\|_{L^2}^2+\|\nabla b(t,\cdot)\|_{L^2}^2)+\nu \|\nabla^2 u(t,\cdot)\|_{L^2}^2+\eta \|\nabla^2 b(t,\cdot)\|_{L^2}^2\leq \int_{\T^2}|\nabla u||\nabla u|^2\de x+3\int_{\T^2}|\nabla u||\nabla b|^2\de x
\end{equation}
We now use Holder, Ladyzenskaya, and Young inequality to get that
\begin{align*}
\int_{\T^2}|\nabla u||\nabla u|^2\de x&\leq \|\nabla u(t,\cdot)\|_{L^2}\|\nabla u(t,\cdot)\|_{L^4}^2\\
&\leq \|\nabla u(t,\cdot)\|_{L^2}^2 \|\nabla^2 u(t,\cdot)\|_{L^2}\\
&\leq \frac{C}{ \nu \varepsilon} \|\nabla u(t,\cdot)\|_{L^2}^4+ \nu \varepsilon \|\nabla^2 u(t,\cdot)\|_{L^2}^2,
\end{align*}
and
\begin{align*}
3\int_{\T^2}|\nabla u||\nabla b|^2\de x&\leq \|\nabla u(t,\cdot)\|_{L^2}\|\nabla b(t,\cdot)\|_{L^4}^2\\
&\leq 3\|\nabla u(t,\cdot)\|_{L^2}\|\nabla b(t,\cdot)\|_{L^2} \|\nabla^2 b(t,\cdot)\|_{L^2}\\
&\leq \frac{C}{\eta \varepsilon} \|\nabla u(t,\cdot)\|_{L^2}^2\|\nabla b(t,\cdot)\|_{L^2}^2 + \eta \varepsilon \|\nabla^2 b(t,\cdot)\|_{L^2}^2.
\end{align*}
Then, by ``absorbing constants" and taking $\varepsilon$ small we get, for all $\sigma < \min (\eta, \nu)$
and $\kappa < 2$
\begin{align}\label{wiitt}
\frac{1}{2}\frac{\de}{\de t}(\|\nabla u(t,\cdot)\|_{L^2}^2+&\|\nabla b(t,\cdot)\|_{L^2}^2)+  \kappa \nu \|\nabla^2 u(t,\cdot)\|_{L^2}^2
+ \kappa \eta  \|\nabla^2 b(t,\cdot)\|_{L^2}^2\\
&\nonumber 
\leq \frac{C}{\sigma} \|\nabla u(t,\cdot)\|_{L^2}^4+ \frac{C}{\sigma} \|\nabla u(t,\cdot)\|_{L^2}^2\|\nabla b(t,\cdot)\|_{L^2}^2\\
&\nonumber
\leq \frac{C}{\sigma}\|\nabla u(t,\cdot)\|_{L^2}^2\left(\|\nabla u(t,\cdot)\|_{L^2}^2+\|\nabla b(t,\cdot)\|_{L^2}^2\right),
\end{align}
where $C$ depends on  $\kappa$ and $\sigma$. Finally, by Poincar\'e's inequality we obtain
\begin{equation}\label{eq:H1-poinc}
\frac{\de}{\de t}(\|\nabla u(t,\cdot)\|_{L^2}^2+\|\nabla b(t,\cdot)\|_{L^2}^2)
\leq \left(-2 \sigma + \frac{C}{\sigma}\|\nabla u(t,\cdot)\|_{L^2}^2\right)\left(\|\nabla u(t,\cdot)\|_{L^2}^2+\|\nabla b(t,\cdot)\|_{L^2}^2\right),
\end{equation}
and then, by Gronwall's lemma and the energy inequality we obtain that
\begin{equation}
\|\nabla u(t,\cdot)\|_{L^2}^2+\|\nabla b(t,\cdot)\|_{L^2}^2\leq \left(\|\nabla u_0\|_{L^2}^2+\|\nabla b_0\|_{L^2}^2\right)e^{\frac{C \Gamma}{\sigma^2}}e^{-2\sigma t}.
\end{equation}
Finally, we integrate in time \eqref{wiitt} and by using the energy inequality and the estimate above we obtain that
\begin{equation}
\kappa \nu \int_0^t \|\nabla^2 u(s,\cdot)\|_{L^2}^2\de s+ \kappa \eta \int_0^t \|\nabla^2 b(s,\cdot)\|_{L^2}^2\de s
\leq \left(\|\nabla u_0\|_{L^2}^2+\|\nabla b_0\|_{L^2}^2\right)\left(1+\frac{CC_0}{\sigma^2}e^{\frac{C \Gamma}{\sigma^2}}\right),
\end{equation}
which implies \eqref{est:HrWD}.\\
\\
\underline{\em Step 3} \,\,Inductive step.\\
\\
Let $r\geq 2$ and assume that the estimate \eqref{est:Hr} holds for $r-1$. Let $\alpha\in\N^2$ with $|\alpha|=r$, we differentiate the equation by $\partial^\alpha$ and we multiply by $\partial^\alpha u$ and $\partial^\alpha b$ respectively the equations for $u$ and $ b$ in \eqref{eq:mhd}, obtaining that
\begin{align*}
\frac{1}{2}\frac{\de}{\de t} &(\|\partial^\alpha u(t,\cdot)\|_{L^2}^2 +\|\partial^\alpha b(t,\cdot)\|_{L^2}^2)+\nu \|\nabla\partial^\alpha u(t,\cdot)\|_{L^2}^2 +\eta \|\nabla\partial^\alpha b(t,\cdot)\|_{L^2}^2\\
&+ \int_{\T^2}\partial^\alpha [(u\cdot\nabla)u]:\partial^\alpha u\de x+\int_{\T^2}\partial^\alpha [(u\cdot\nabla)b]:\partial^\alpha b\de x=\int_{\T^2}\partial^\alpha [(b\cdot\nabla)u]:\partial^\alpha b\de x\\
&+\int_{\T^2}\partial^\alpha [(b\cdot\nabla)b]:\partial^\alpha u\de x.
\end{align*}
Then, by using again the properties of the transport operator we have that
\begin{align}
\frac{1}{2}\frac{\de}{\de t} &(\|\partial^\alpha u(t,\cdot)\|_{L^2}^2 +\|\partial^\alpha b(t,\cdot)\|_{L^2}^2)+\nu \|\nabla\partial^\alpha u(t,\cdot)\|_{L^2}^2 +\eta \|\nabla\partial^\alpha b(t,\cdot)\|_{L^2}^2\nonumber\\
&+\sum_{\beta\leq \alpha,\beta\neq 0} \int_{\T^2}\partial^\beta u\cdot\nabla \partial^{\alpha-\beta}u:\partial^\alpha u\de x+ \sum_{\beta\leq \alpha,\beta\neq 0} \int_{\T^2}\partial^\beta u\cdot\nabla \partial^{\alpha-\beta}b:\partial^\alpha b\de x\nonumber\\
&=\sum_{\beta\leq \alpha,\beta\neq 0} \int_{\T^2}\partial^\beta b\cdot\nabla \partial^{\alpha-\beta}u:\partial^\alpha b\de x+\sum_{\beta\leq \alpha,\beta\neq 0} \int_{\T^2}\partial^\beta b\cdot\nabla \partial^{\alpha-\beta}b:\partial^\alpha u\de x.\label{eq:identity-a-Hr}
\end{align}
By summing over $|\alpha|=r$ and $0\neq \beta\leq \alpha$ we get the estimate
\begin{align}
\frac{1}{2}\frac{\de}{\de t} (\|\nabla^r u(t,\cdot)\|_{L^2}^2 & +\|\nabla^r b(t,\cdot)\|_{L^2}^2)+\nu \|\nabla^{r+1} u(t,\cdot)\|_{L^2}^2 +\eta \|\nabla^{r+1} b(t,\cdot)\|_{L^2}^2\nonumber\\
&\leq \sum_{m=1}^r\left(\int_{\T^2}|\nabla^m u||\nabla^{r+1-m}u||\nabla^r u|\de x+ \int_{\T^2}|\nabla^m u||\nabla^{r+1-m} b||\nabla^r b|\de x\right)\nonumber\\
&+\sum_{m=1}^r\left(\int_{\T^2}|\nabla^m b||\nabla^{r+1-m}u||\nabla^r b|\de x+ \int_{\T^2}|\nabla^m b||\nabla^{r+1-m} b||\nabla^r u|\de x\right).\label{est:somma-hr}
\end{align}
Now note that the second and the third terms on the right hand side are the same. Then, consider the terms of the sum with $m=1,r$: we have the following estimates
\begin{align}\label{like2}
\int_{\T^2}|\nabla u(t,x)||\nabla^r u(t,x)|^2\de x & \leq \|\nabla u(t,\cdot)\|_{L^2}\|\nabla^r u(t,\cdot)\|^2_{L^4} \nonumber \\
&\leq \|\nabla u(t,\cdot)\|_{L^2}\|\nabla^r u(t,\cdot)\|_{L^2}\|\nabla^{r+1} u(t,\cdot)\|_{L^2} \nonumber \\
&\leq \frac{C}{\nu \varepsilon}\|\nabla u(t,\cdot)\|_{L^2}^2\|\nabla^r u(t,\cdot)\|_{L^2}^2+ \nu \varepsilon \|\nabla^{r+1} u(t,\cdot)\|_{L^2}^2.
\end{align}

\begin{align}\label{like3}
\int_{\T^2}|\nabla u(t,x)||\nabla^r b(t,x)|^2\de x & \leq \|\nabla u(t,\cdot)\|_{L^2}\|\nabla^r b(t,\cdot)\|^2_{L^4} \nonumber \\
&\leq \|\nabla u(t,\cdot)\|_{L^2}\|\nabla^r b(t,\cdot)\|_{L^2}\|\nabla^{r+1} b(t,\cdot)\|_{L^2} \nonumber \\
&\leq \frac{C}{\eta \varepsilon}\|\nabla u(t,\cdot)\|_{L^2}^2\|\nabla^r b(t,\cdot)\|_{L^2}^2+ \eta \varepsilon \|\nabla^{r+1} b(t,\cdot)\|_{L^2}^2.
\end{align}

\begin{align}
\int_{\T^2}|\nabla b(t,x)||\nabla^r b(t,x)||\nabla^r u(t,x)|\de x &\leq \int_{\T^2}|\nabla b(t,x)||\nabla^r b(t,x)|^2\de x+\int_{\T^2}|\nabla b(t,x)||\nabla^r u(t,x)|^2\de x\nonumber\\
&\leq \frac{C}{\sigma \varepsilon}\|\nabla b(t,\cdot)\|_{L^2}^2(\|\nabla^r u(t,\cdot)\|_{L^2}^2+\|\nabla^r b(t,\cdot)\|_{L^2}^2)\nonumber\\
&+ \nu \varepsilon \|\nabla^{r+1} u(t,\cdot)\|_{L^2}^2+ \eta \varepsilon \|\nabla^{r+1} b(t,\cdot)\|_{L^2}^2.
\end{align}

If $r=2$ this is enough to conclude since, taking $\varepsilon$ small we have obtained that
\begin{align*}
\frac{\de}{\de t} (\|\nabla^r u(t,\cdot)\|_{L^2}^2 & +\|\nabla^r b(t,\cdot)\|_{L^2}^2) + \kappa \nu  \|\nabla^{r+1} u(t,\cdot)\|_{L^2}^2 +\kappa \eta \|\nabla^{r+1} b(t,\cdot)\|_{L^2}^2\\
&\leq \frac{C}{\sigma}\left(\|\nabla u(t,\cdot)\|_{L^2}^2+\|\nabla b(t,\cdot)\|_{L^2}^2\right)(\|\nabla^r u(t,\cdot)\|_{L^2}^2+\|\nabla^r b(t,\cdot)\|_{L^2}^2),
\end{align*}
and as we did several times before, by Poincar\'e and Gronwall inequality we get that
\begin{equation}
\|\nabla^r u(t,\cdot)\|_{L^2}^2 +\|\nabla^r b(t,\cdot)\|_{L^2}^2\leq \left(\|\nabla^r u_0\|_{L^2}^2 +\|\nabla^r b_0\|_{L^2}^2\right)e^{\frac{C\Gamma}{\sigma^2}}e^{-2\sigma t},
\end{equation}
which also implies after integration in time
\begin{align}
\kappa \nu \int_0^t\|\nabla^{r+1} u(s,\cdot)\|_{L^2}^2 \de s&+ \kappa \eta \int_0^t \|\nabla^{r+1} b(s,\cdot)\|_{L^2}^2\de s\leq \|\nabla^r u_0\|_{L^2}^2 +\|\nabla^r b_0\|_{L^2}^2\nonumber\\
&+\frac{C}{\sigma}\int_0^t\left(\|\nabla u(s,\cdot)\|_{L^2}^2+\|\nabla b(s,\cdot)\|_{L^2}^2\right)(\|\nabla^r u(s,\cdot)\|_{L^2}^2+\|\nabla^r b(s,\cdot)\|_{L^2}^2)\de s \nonumber\\
&\leq 
(\|u_0\|_{H^r}^2+\|b_0\|_{H^r}^2)
 e^{\frac{C\Gamma}{\sigma^2}}.
\end{align}

Then, we assume that $r\geq 3$ and we estimate the remaining terms in \eqref{est:somma-hr} as follows: back to \eqref{eq:identity-a-Hr}, we exploit the divergence-free condition and by integrating by parts we have to bound the following integrals
\begin{align}
\sum_{m=2}^{r-1}\int_{\T^2} |\nabla^m u(t,x)||\nabla^{r-m} u(t,x)||\nabla^{r+1} u(t,x)|\de x&\leq  \sum_{m=2}^{r-1}C\int_{\T^2} |\nabla^m u(t,x)|^2|\nabla^{r-m} u(t,x)|^2\de x\nonumber\\
&+\frac{\nu}{c}\int_{\T^2}|\nabla^{r+1} u(t,x)|^2 \de x,\nonumber
\end{align}
and then by Ladyzenskaya
\begin{align}
\int_{\T^2} |\nabla^m u(t,x)|^2&|\nabla^{r-m} u(t,x)|^2\de x\leq\|\nabla^m u(t,\cdot)\|_{L^4}^2\|\nabla^{r-m} u(t,\cdot)\|_{L^4}^2\nonumber\\
&\leq \|\nabla^m u(t,\cdot)\|_{L^2}\|\nabla^{m+1} u(t,\cdot)\|_{L^2}\|\nabla^{r-m} u(t,\cdot)\|_{L^2}\|\nabla^{r-m+1} u(t,\cdot)\|_{L^2}\nonumber\\
&\leq \frac{1}{2}\|\nabla^m u(t,\cdot)\|_{L^2}^2\|\nabla^{r-m+1} u(t,\cdot)\|_{L^2}^2 +
\frac{1}{2} \|\nabla^{m+1} u(t,\cdot)\|_{L^2}^2\|\nabla^{r-m} u(t,\cdot)\|_{L^2}^2.
\end{align} 
Note that choosing $m= r-1$ in the second term we have a contribution 
$ \|\nabla^{r} u(t,\cdot)\|_{L^2}^2\|\nabla u(t,\cdot)\|_{L^2}^2$. Thus, recalling the previous estimates for the contributions $m=1,r$, after rearranging the indexes in the sums we arrive to 
\begin{align}
\frac{\de}{\de t} (\|\nabla^r u(t,\cdot)\|_{L^2}^2 &+\|\nabla^r b(t,\cdot)\|_{L^2}^2)\leq - \kappa \nu \|\nabla^{r} u(t,\cdot)\|_{L^2}^2 -\kappa \eta \|\nabla^{r} b(t,\cdot)\|_{L^2}^2\nonumber\\
&+\frac{C}{\sigma}\left(\|\nabla u(t,\cdot)\|_{L^2}^2+\|\nabla b(t,\cdot)\|_{L^2}^2\right)(\|\nabla^r u(t,\cdot)\|_{L^2}^2+\|\nabla^r b(t,\cdot)\|_{L^2}^2)\label{like3}\\
&+\frac{C}{\sigma}\sum_{m=2}^{r-1}\left(\|\nabla^{r-m+1} u(t,\cdot)\|_{L^2}^2\|\nabla^{m} u(t,\cdot)\|_{L^2}^2 \label{like4}
\right).
\end{align}
We estimate the last group of terms using the induction assumption \eqref{est:Hr} for $m=2, \ldots, r-1$. This yelds 
\begin{equation}
 \|\nabla^{r-m+1} u(t,\cdot)\|_{L^2}^2  \|\nabla^{m} u(t,\cdot)\|_{L^2}^2 
\leq 
N^{2(r-m+1)}  e^{\frac{C \Gamma}{\sigma^2}}
  \|\nabla^{m} u(t,\cdot)\|_{L^2}^2 .
\end{equation}
Plugging this into \eqref{like4} we arrive to 
\begin{align}
\frac{\de}{\de t} (\|\nabla^r u(t,\cdot)\|_{L^2}^2 &+\|\nabla^r b(t,\cdot)\|_{L^2}^2)\leq - \kappa \nu \|\nabla^{r} u(t,\cdot)\|_{L^2}^2 -\kappa \eta \|\nabla^{r} b(t,\cdot)\|_{L^2}^2\nonumber\\
&+\frac{C}{\sigma}\left(\|\nabla u(t,\cdot)\|_{L^2}^2+\|\nabla b(t,\cdot)\|_{L^2}^2\right)(\|\nabla^r u(t,\cdot)\|_{L^2}^2+\|\nabla^r b(t,\cdot)\|_{L^2}^2)\nonumber\\
&+ \frac{C}{\sigma}\sum_{m=2}^{r-1} N^{2(r-m+1)}  e^{\frac{C \Gamma}{\sigma^2}}
  \|\nabla^{m} u(t,\cdot)\|_{L^2}^2. \label{like5}
\end{align}
Using Poincar\'e inequality and the Gronwall lemma we obtain
the desired estimate \eqref{est:Hr}. Note that the time integral of the last contribution of the right hand side of \eqref{like5} is estimated using the induction assumption \eqref{est:HrWD}. Integrating \eqref{like5} we also obtain the \eqref{est:HrWD} and the proof is concluded.
\end{proof}

\subsection{Boundedness of the $H^r$ norms of the difference equation \eqref{eq:differenceTris}}
We also need a perturbative version of Theorem \ref{thm:boundhr}, in order to control the $H^r$ norms of solutions of the difference system 
\begin{equation}\label{eq:differenceTris}
\begin{cases}
\partial_t v+\dive\left(v\otimes v+2v\otimes u\right)+\nabla P_{v,h}=\nu\Delta v+\dive\left(h\otimes h+2h\otimes b\right),\\
\partial_t h+(v\cdot\nabla )h+(u\cdot \nabla)h+(v\cdot\nabla)b=\eta\Delta h+(h\cdot\nabla)v+(h\cdot\nabla)u+(b\cdot\nabla)v, \\
\dive v=\dive h=0,
\end{cases}
\end{equation}
where $(u,b)$ is a solution of \eqref{eq:mhd}. Note that $w=u+v$, $m=b+h$ solves the \eqref{eq:mhd} equation (with an appropriate choice of the pressure).

To do so 
we define 
$$
\tilde \Gamma :=  1 +
\|u_0\|_{L^2}^2+\|b_0\|_{L^2}^2 
+ \|v_0\|_{L^2}^2+\|h_0\|_{L^2}^2
$$
and we take $\sigma < \min ( \nu,\eta )$.

\begin{thm}\label{thm:boundhrStab}
Let $v_0,h_0\in H^r(\T^2)$ be two divergence-free vector fields with zero mean. Let $(v,h,P_{v,h})$ be the unique solution \eqref{eq:differenceTris} with initial datum $(v_0,h_0)$, where $(u,b)$ is a solution of \eqref{eq:mhd}. Assume that 
$$
\| u_0 \|_{H^m} + \| b_0 \|_{H^m}  \leq C N^m, \qquad \| v_0 \|_{H^m} + \| h_0 \|_{H^m} \leq C \delta, \qquad m=0, \ldots, r,
$$  
for some $N > 1$.
Then
\begin{equation}\label{est:HrStab}
\|h(t,\cdot)\|_{H^r}^2 + \|v(t,\cdot)\|_{H^r}^2
\leq  
\delta^2 N^{2r} e^{-2\sigma t} e^{\frac{C \tilde\Gamma}{\sigma^2}},
\end{equation}
where the implicit constants $C$ depend on $r, \sigma$.
\end{thm}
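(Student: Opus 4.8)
The plan is to prove \eqref{est:HrStab} by induction on $r$, following the same scheme as the proof of Theorem \ref{thm:boundhr} but now for the difference system \eqref{eq:differenceTris}, and freely using the bounds on the reference solution $(u,b)$ already established in that theorem: the pointwise-in-time bound $\|u(t)\|_{H^m}^2+\|b(t)\|_{H^m}^2\lesssim N^{2m}e^{-2\sigma t}e^{C\Gamma/\sigma^2}$ for $m\le r$, together with the space-time control $\kappa\nu\int_0^t\|\nabla^{r+1}u\|_{L^2}^2\de s+\kappa\eta\int_0^t\|\nabla^{r+1}b\|_{L^2}^2\de s\lesssim N^{2r}e^{C\Gamma/\sigma^2}$ coming from \eqref{est:HrWD}. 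Throughout I write $E_s(t):=\|\nabla^s v(t)\|_{L^2}^2+\|\nabla^s h(t)\|_{L^2}^2$ and use Poincar\'e's inequality (both fields are zero-average for all times) to turn dissipation into decay.

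For the base case $r=0$ I would pair the first equation of \eqref{eq:differenceTris} with $v$ and the second with $h$, integrate, and exploit the usual cancellations (the purely quadratic self-interactions cancel their transposes, exactly as in Step 2 of the proof of Theorem \ref{thm:stab}). The surviving terms are all bilinear in (perturbation, reference), e.g. $\int(v\cdot\nabla)u\cdot v$ and $\int(v\cdot\nabla)b\cdot h$; each is bounded by Ladyzenskaya's and Young's inequalities, absorbing the resulting factors $\|\nabla v\|_{L^2}^2,\|\nabla h\|_{L^2}^2$ into the dissipation $\nu\|\nabla v\|_{L^2}^2+\eta\|\nabla h\|_{L^2}^2$. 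This produces $\tfrac{\de}{\de t}E_0\le(-2\sigma+\tfrac{C}{\sigma}(\|\nabla u\|_{L^2}^2+\|\nabla b\|_{L^2}^2))E_0$. Since \eqref{EnergyLast?} gives $\int_0^t(\|\nabla u\|_{L^2}^2+\|\nabla b\|_{L^2}^2)\de s\le\tfrac{1}{2\sigma}(\|u_0\|_{L^2}^2+\|b_0\|_{L^2}^2)\le\tfrac{\tilde\Gamma}{2\sigma}$, Gronwall's lemma yields $E_0(t)\le E_0(0)e^{-2\sigma t}e^{C\tilde\Gamma/\sigma^2}$, and $E_0(0)\le C\delta^2$ by hypothesis; this is \eqref{est:HrStab} with $s=0$.

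For the inductive step I would apply $\partial^\alpha$ with $|\alpha|=r$ to both equations, pair with $\partial^\alpha v$ and $\partial^\alpha h$, sum over $\alpha$, and sort the nonlinear contributions into three families. The \emph{pure-perturbation} terms (from $\dive(v\otimes v)$, $\dive(h\otimes h)$, $(v\cdot\nabla)h$, $(h\cdot\nabla)v$) are estimated exactly as the cubic terms in Step 3 of Theorem \ref{thm:boundhr}, producing a coefficient $\tfrac{C}{\sigma}(\|\nabla v\|_{L^2}^2+\|\nabla h\|_{L^2}^2)E_r$ plus products of intermediate norms controlled by the inductive hypothesis. The \emph{low-order coupling} terms, where the reference is differentiated at order $\le r-1$, are handled as in \eqref{like5}: the reference factor is bounded pointwise by $N^{2(r-m)}e^{C\Gamma/\sigma^2}$ and the remaining $E_m$, $m<r$, is fed from the induction. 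The crucial observation that keeps the exponential factor free of $N$ is that the gradient coefficients are integrated in time not through their pointwise $\sim\delta^2 N^2$ bounds but through the energy identity \eqref{EnergyLast?} for the full solution $(w,m)=(u+v,b+h)$ and for $(u,b)$ (both solve \eqref{eq:mhd}): this gives $\int_0^t(\|\nabla v\|_{L^2}^2+\|\nabla h\|_{L^2}^2)\de s\lesssim\tfrac1\sigma(\|w_0\|_{L^2}^2+\|m_0\|_{L^2}^2+\|u_0\|_{L^2}^2+\|b_0\|_{L^2}^2)\lesssim\tfrac{\tilde\Gamma}{\sigma}$, so the full Gronwall coefficient $\tfrac{C}{\sigma}(\|\nabla u\|_{L^2}^2+\|\nabla b\|_{L^2}^2+\|\nabla v\|_{L^2}^2+\|\nabla h\|_{L^2}^2)$ integrates to $\lesssim\tilde\Gamma/\sigma$ and the prefactor remains $e^{C\tilde\Gamma/\sigma^2}$.

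The main obstacle is the third family, the \emph{top-order coupling} terms, in which the reference is differentiated maximally, e.g. the $\beta$-extremal contributions of $\partial^\alpha\dive(v\otimes u)$ and $\partial^\alpha[(v\cdot\nabla)b]$; these are exactly the terms that generate the factor $N^{2r}$. The difficulty is that at order $r+1$ the reference is available only through the space-time bound \eqref{est:HrWD}, not pointwise in time. My plan is to keep these terms in divergence form and integrate by parts so that the single absorbable top-order factor $\nabla^{r+1}v$ (resp. $\nabla^{r+1}h$) is carried by the perturbation, estimate the remaining reference factor by Ladyzenskaya, and bound the time integral by Cauchy--Schwarz against $\big(\int_0^t\|\nabla^{r+1}u\|_{L^2}^2\de s\big)^{1/2}\lesssim N^{r}$ from \eqref{est:HrWD}, so that the forcing $F$ obeys $\int_0^t F\,\de s\lesssim\delta^2 N^{2r}e^{C\tilde\Gamma/\sigma^2}$. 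The delicate point, where one must be careful not to lose extra powers of $N$, is to distribute the derivatives and split the interpolation exponents so that $\nabla^{r+1}u$ enters to exactly the first power (matching its $L^2$-in-time control) while every perturbation factor is estimated by the already-established bounds on $E_m$ and on $\int\|\nabla^{m+1}v\|_{L^2}^2$. Once $F$ is controlled, Gronwall's lemma gives $E_r(t)\le C(E_r(0)+\delta^2 N^{2r})e^{-2\sigma t}e^{C\tilde\Gamma/\sigma^2}$, and since $E_r(0)\le C\delta^2$ by hypothesis this is precisely \eqref{est:HrStab}, closing the induction.
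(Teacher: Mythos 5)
Your proposal is correct and follows the route the paper intends: the paper omits the proof entirely, stating only that it is ``a straightforward generalization'' of Theorem \ref{thm:boundhr}, and your plan carries out precisely that generalization (induction on $r$, the same cancellations and Ladyzenskaya/Young absorptions, Gronwall with the gradient coefficients integrated through the energy identities so that the exponential prefactor stays $e^{C\tilde\Gamma/\sigma^2}$). You have moreover correctly isolated the one genuinely non-mechanical point --- the coupling terms in which the reference field carries $r+1$ derivatives, which have no counterpart in Theorem \ref{thm:boundhr} because there the corresponding $\beta=0$ transport contributions cancel --- and your proposed resolution (integrating by parts so the absorbable top-order factor sits on the perturbation, then pairing $\nabla^{r+1}u,\nabla^{r+1}b\in L^2_t$ from \eqref{est:HrWD} against the dissipative $L^2_t$ control of $\nabla^{m+1}v,\nabla^{m+1}h$ by Cauchy--Schwarz in time) is exactly what makes the count close at $\delta^2N^{2r}$ rather than $\delta^2N^{2r+1}$.
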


The proof is a straightforward generalization of that of Theorem \ref{thm:boundhr}, we left the details to the reader. Again, at least for small values of $r$ we may prove stronger estimates. For instance for $r=0$ we may set $\tilde \Gamma :=  \|u_0\|_{L^2}^2+\|b_0\|_{L^2}^2$. The bound \eqref{est:HrStab} is however sufficient for our purposes.

\subsection{Decay of the velocity}
We conclude the section proving that the $H^r$ norm of the velocity decays in $\nu$ (namely for large viscosity). This result (Theorem \ref{Lem:HrDecay}) will be useful to prove 2D magnetic reconnection with arbitrary initial velocity and large viscosity, namely Theorem \ref{thm:main2}. 

%

\begin{thm}\label{Lem:HrDecay}
Let $u_0,b_0\in H^r(\T^2)$ be two divergence-free vector fields with zero mean. Assume $$\| u_0 \|_{H^m} + \| b_0 \|_{H^m} \leq C N^m, \qquad m=0, \ldots, r,$$ for some $N > 1$. Let $(u,b,p)$ be the unique solution of \eqref{eq:mhd} with initial datum $(u_0,b_0)$. Assume that $\nu>3\eta$, then
\begin{equation}\label{DecLargeNu}
\|\nabla^r u(\cdot, t)\|_{L^2}^2 \leq \left(\|\nabla^r u_0\|_{L^2}^2+\frac{Ce^{\nu t}e^{-\eta t}}{\nu \eta}N^{2r}
\right) e^{\frac{C \Gamma}{\sigma^2}} e^{-\nu t} .
\end{equation}
\end{thm}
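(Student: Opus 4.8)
The plan is to isolate the velocity equation in \eqref{eq:mhd} and run an $H^r$ energy estimate on $u$ alone, treating the Lorentz force $(b\cdot\nabla)b$ as an external forcing whose amplitude and decay are already supplied by Theorem \ref{thm:boundhr}. Concretely, I would fix a multi-index $\alpha$ with $|\alpha|=r$, apply $\partial^\alpha$ to the first equation of \eqref{eq:mhd}, pair with $\partial^\alpha u$ and integrate over $\T^2$; the pressure term drops by incompressibility. Summing over $|\alpha|=r$ gives
\begin{equation*}
\frac12\frac{\de}{\de t}\|\nabla^r u\|_{L^2}^2+\nu\|\nabla^{r+1}u\|_{L^2}^2 \leq |I_u|+|I_b|,
\end{equation*}
where $I_u$ gathers the commutator terms from $(u\cdot\nabla)u$ (the top-order transport term cancels thanks to $\dive u=0$) and $I_b$ gathers the terms from $(b\cdot\nabla)b$.

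For $I_u$ I would argue exactly as in the inductive step of the proof of Theorem \ref{thm:boundhr}: using the divergence-free condition, integration by parts, Ladyzenskaya's inequality and Young's inequality, every contribution is bounded by $\e\|\nabla^{r+1}u\|_{L^2}^2$ plus $\frac{C}{\nu}\|\nabla u\|_{L^2}^2\|\nabla^r u\|_{L^2}^2$ together with strictly lower-order pieces already controlled by the induction hypothesis \eqref{est:Hr}. The first term is absorbed into the dissipation. The decisive bookkeeping is that the coefficient $\frac{C}{\nu}\|\nabla u\|_{L^2}^2$ multiplying $\|\nabla^r u\|_{L^2}^2$ has time integral at most $\frac{C}{\nu}\int_0^t\|\nabla u\|_{L^2}^2\,\de s\leq \frac{C\Gamma}{\nu^2}\leq\frac{C\Gamma}{\sigma^2}$, by the energy equality \eqref{EnergyLast?} and $\sigma<\nu$; this is precisely what produces the factor $e^{C\Gamma/\sigma^2}$ in \eqref{DecLargeNu}.

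The term $I_b$ is the main obstacle. Here I would integrate by parts using $\dive b=0$ so that at most $r$ derivatives fall on $b$ while one extra derivative is shifted onto $u$, and then apply Young's inequality to absorb the resulting $\|\nabla^{r+1}u\|_{L^2}^2$ into $\nu\|\nabla^{r+1}u\|_{L^2}^2$. What remains is a genuinely \emph{magnetic} source,
\begin{equation*}
|I_b|\leq \e\|\nabla^{r+1}u\|_{L^2}^2+\frac{C}{\nu}\,g(t),\qquad g(t)\lesssim \|b(t)\|_{L^\infty}^2\,\|\nabla^r b(t)\|_{L^2}^2 ,
\end{equation*}
up to lower-order terms treated identically. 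The key point is that $g$ involves only $b$, so by the decay estimate \eqref{est:Hr} and the $2$D Sobolev embedding it is controlled by $N^{2r}e^{C\Gamma/\sigma^2}e^{-\eta t}$ (the precise power of $N$ coming from the low-order factor is immaterial for the intended application, where $N$ is fixed and $\nu\to\infty$).

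Absorbing every $\|\nabla^{r+1}u\|_{L^2}^2$ term (choosing $\e$ small enough that a coefficient $\geq\nu$ survives) and applying Poincaré's inequality $\|\nabla^r u\|_{L^2}\leq\|\nabla^{r+1}u\|_{L^2}$, the problem reduces to the scalar differential inequality
\begin{equation*}
\frac{\de}{\de t}y(t)\leq \Big(-\nu+\tfrac{C}{\nu}\|\nabla u\|_{L^2}^2\Big)y(t)+\tfrac{C}{\nu}g(t),\qquad y:=\|\nabla^r u\|_{L^2}^2 .
\end{equation*}
Integrating by Gronwall, the homogeneous factor contributes $e^{-\nu t}e^{C\Gamma/\sigma^2}$ as explained above, so it remains to bound the Duhamel term $e^{-\nu t}\int_0^t e^{\nu s}\frac{C}{\nu}g(s)\,\de s$. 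This is where the hypothesis $\nu>3\eta$ enters: inserting $g(s)\lesssim N^{2r}e^{C\Gamma/\sigma^2}e^{-\eta s}$ and using $\nu-\eta\geq 2\eta$,
\begin{equation*}
e^{-\nu t}\int_0^t e^{\nu s}e^{-\eta s}\,\de s=\frac{e^{-\nu t}}{\nu-\eta}\big(e^{(\nu-\eta)t}-1\big)\leq \frac{1}{2\eta}\,e^{-\eta t},
\end{equation*}
which furnishes exactly the extra $\frac{1}{\eta}$ and the net decay $e^{-\eta t}$. Multiplying by $\frac{C}{\nu}N^{2r}e^{C\Gamma/\sigma^2}$ yields the second term of \eqref{DecLargeNu}, and combining it with the homogeneous part gives the claim. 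The only subtle step is thus the treatment of $I_b$: one must split the Lorentz force so that the velocity dissipation provides the $\frac1\nu$, while the decaying magnetic amplitude from Theorem \ref{thm:boundhr} together with the weighted integration and $\nu>3\eta$ provides both the $\frac1\eta$ and genuine decay (rather than mere boundedness) in time.
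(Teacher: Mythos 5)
Your proposal is correct and follows essentially the same route as the paper: an $H^r$ energy estimate on the velocity equation alone, absorbing the Lorentz force into the viscous dissipation to produce the $1/\nu$, invoking Theorem \ref{thm:boundhr} for the magnetic decay, and closing with Gronwall and the weighted integral $e^{-\nu t}\int_0^t e^{(\nu-\eta)s}\,\de s$ where $\nu>3\eta$ supplies the $1/\eta$ and the net $e^{-\eta t}$ decay. The only (minor, and acknowledged) deviation is that bounding the magnetic source by $\|b\|_{L^\infty}^2\|\nabla^r b\|_{L^2}^2$ via the pointwise decay \eqref{est:Hr} costs extra powers of $N$; the paper instead uses Ladyzenskaya to produce factors $\|\nabla^{m+1}b\|_{L^2}^2$ and controls their time integral by the dissipation bound \eqref{est:HrWD}, which recovers exactly $N^{2r}$ as stated in \eqref{DecLargeNu}.
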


\begin{rem}
This bound will be important for two reasons. The first, is that the right hand side goes to zero as $\nu \to \infty$. The second, is that, as in all the 2D results of this paper, the estimate is exponential in the $L^2$ norm of the initial datum, while only polynomial in the higher order Sobolev norms.
\end{rem}

\begin{proof}
We divide the proof in several steps.\\
\\
\underline{\em Step 1} $L^2$ estimate\\
\\
Let us consider the equation for the velocity field, namely
$$
\partial_t u+(u\cdot \nabla)u+\nabla P=\nu\Delta u+(b\cdot \nabla)b.
$$
Multiply the equation above by $u$ and integrating by parts it is easy to show that
\begin{align*}
\frac{1}{2}\frac{\de}{\de t}\|u(t,\cdot)\|_{L^2}^2+\nu\|\nabla u(t,\cdot)\|_{L^2}^2&=-\int_{\T^2}b\otimes b:\nabla u\de x\\
&\leq \|b(t,\cdot)\|^2_{L^4}\|\nabla u(t,\cdot)\|_{L^2}\\
&\leq \|b(t,\cdot)\|_{L^2}\|\nabla b(t,\cdot)\|_{L^2}\|\nabla u(t,\cdot)\|_{L^2}\\
&\leq \frac{C}{\nu}\|b(t,\cdot)\|_{L^2}^2\|\nabla b(t,\cdot)\|_{L^2}^2+\frac{\nu}{2}\|\nabla u(t,\cdot)\|_{L^2}^2,
\end{align*}
where in the third line we used Ladyzhenskaya's inequality, while in the fourth we used Young's inequality. Then, by Poincar\'e's inequality we obtain that
\begin{equation}
\frac{\de}{\de t}\|u(t,\cdot)\|_{L^2}^2\leq -\nu\|u(t,\cdot)\|_{L^2}^2+\frac{C}{\nu}\|b(t,\cdot)\|_{L^2}^2\|\nabla b(t,\cdot)\|_{L^2}^2.
\end{equation}
Now, we use Gronwall's inequality and Theorem \ref{thm:boundhr} 
with $r=0,1$ (and say $\kappa =1$) to compute  
$$
\|u(t,\cdot)\|_{H^r}^2+\|b(t,\cdot)\|_{H^r}^2
\leq 
 N^{2r}  e^{-2\sigma t} e^{\frac{C \Gamma}{\sigma^2}}.
$$
\begin{align*}
\|u(t,\cdot)\|_{L^2}^2&\leq\left(\|u_0\|_{L^2}^2+\frac{C}{\nu}\int_0^t \|b(s,\cdot)\|_{L^2}^2\|\nabla b(s,\cdot)\|_{L^2}^2 e^{\nu s}\de s\right)e^{-\nu t}\\
&\leq \left(\|u_0\|_{L^2}^2+\frac{Ce^{\frac{C \Gamma}{\sigma^2}}}{\nu}\int_0^t \|\nabla b(s,\cdot)\|_{L^2}^2 e^{(\nu-2\sigma) s}\de s\right)e^{-\nu t}\\
&\leq \left(\|u_0\|_{L^2}^2+\frac{Ce^{\frac{C \Gamma}{\sigma^2}}}{\nu}e^{(\nu-2\sigma) t}\int_0^t \|\nabla b(s,\cdot)\|_{L^2}^2\de s\right)e^{-\nu t}\\
&\leq \|u_0\|_{L^2}^2 e^{-\nu t}+\frac{Ce^{\frac{C \Gamma}{\sigma^2}} }{\nu\eta}e^{- \eta t},
\end{align*}
where we have taken $\sigma > \frac{\eta}2$; this is possible as we assumed that $\eta = \min (\eta, \nu)$ 
(in fact $\nu>3\eta$).\\
\\
\underline{\em Step 2} $H^1$ estimate.\\
\\
We differentiate the equation and multiply by $\nabla u$ to get, after integration by parts
\begin{align*}
\frac{1}{2}\frac{\de}{\de t}\|\nabla u(t,\cdot)\|_{L^2}^2&+\nu\|\nabla^2 u(t,\cdot)\|_{L^2}^2\leq \int_{\T^2}|\nabla u||\nabla u|^2 \de x+ \int_{\T^2}|\nabla(b\otimes b)||\nabla^2 u|\de x\\
&\leq\frac{\nu}{2} \|\nabla^2 u(t,\cdot)\|_{L^2}^2+\frac{C}{\nu}\|\nabla u(t,\cdot)\|_{L^2}^2\|\nabla u(t,\cdot)\|_{L^2}^2+\frac{C}{\nu} \int_{\T^2}|b|^2|\nabla b|^2\de x\\
&\leq\frac{\nu}{2} \|\nabla^2 u(t,\cdot)\|_{L^2}^2+\frac{C}{\nu}\|\nabla u(t,\cdot)\|_{L^2}^2\|\nabla u(t,\cdot)\|_{L^2}^2\\
&+\frac{C}{\nu}\|b\|_{L^2}^2\|\nabla b\|_{L^2}^2+\frac{C}{\nu}\|\nabla b\|_{L^2}^2\|\nabla^2 b\|_{L^2}^2.
\end{align*}
We rewrite the inequality above by using Poincar\'e's inequality
\begin{align}
\frac{\de}{\de t}\|\nabla u(t,\cdot)\|_{L^2}^2&\leq \left(\frac{C}{\nu}\|\nabla u(t,\cdot)\|_{L^2}^2-\nu\right)\|\nabla u(t,\cdot)\|_{L^2}^2\nonumber\\
&+\frac{C}{\nu}\left(\|b(t,\cdot)\|^2_{L^2}\|\nabla b(t,\cdot)\|^2_{L^2}+\|\nabla b\|_{L^2}^2\|\nabla^2 b\|_{L^2}^2\right),
\end{align}
and as usual, Gronwall's lemma gives that
\begin{align*}
\|\nabla u(t,\cdot)\|_{L^2}^2&\leq \left(\|\nabla u_0\|_{L^2}^2+\frac{C}{\nu}\int_0^t \left(\|b(s,\cdot)\|^2_{L^2}\|\nabla b(s,\cdot)\|^2_{L^2}+\|\nabla b\|_{L^2}^2\|\nabla^2 b\|_{L^2}^2\right)e^{\nu s}\de s\right) e^{\frac{C \Gamma}{\sigma^2}} e^{-\nu t}\\
&\leq \left(\|\nabla u_0\|_{L^2}^2+\frac{C}{\nu}\int_0^t \|\nabla b(s,\cdot)\|^2_{L^2}e^{(\nu-2\sigma) s}\de s+\frac{CN^2}{\nu}\int_0^t \|\nabla^2 b(s,\cdot)\|^2_{L^2}e^{(\nu-2\sigma) s}\de s\right) e^{\frac{C \Gamma}{\sigma^2}} e^{-\nu t}\\
&\leq \left(\|\nabla u_0\|_{L^2}^2+\frac{C}{\nu}e^{(\nu-2\sigma) t}\int_0^t \|\nabla b(s,\cdot)\|^2_{L^2}\de s+\frac{CN^2}{\nu}e^{(\nu-2\sigma) t}\int_0^t \|\nabla^2 b(s,\cdot)\|^2_{L^2}\de s\right) e^{\frac{C \Gamma}{\sigma^2}} e^{-\nu t}\\
&\leq \left(\|\nabla u_0\|_{L^2}^2+\frac{C}{\nu \eta}e^{(\nu-2\sigma) t}+\frac{CN^4}{\nu\eta}e^{(\nu-2\sigma) t}\right) 
e^{\frac{C \Gamma}{\sigma^2}} e^{-\nu t},
\end{align*}
and then the conclusion follows as in the previous step.\\
\\
\underline{\em Step 3} General case.\\
\\
Let $\alpha\in \N^2$ with $|\alpha|=r$, we apply $\partial^\alpha$ to the equation and we multiply by $\partial^\alpha u$ to obtain that
\begin{align}
\frac{1}{2}\frac{\de}{\de t} \|\partial^\alpha u(t,\cdot)\|_{L^2}^2 +\nu \|\nabla\partial^\alpha u(t,\cdot)\|_{L^2}^2 &+\sum_{\beta\leq \alpha,\beta\neq 0} \int_{\T^2}\partial^\beta u\cdot\nabla \partial^{\alpha-\beta}u:\partial^\alpha u\de x\nonumber\\
&=\sum_{\beta\leq \alpha} \int_{\T^2}\partial^\beta b\cdot\nabla \partial^{\alpha-\beta}b:\partial^\alpha u\de x.
\end{align}
We analyze the third and the fourth term separately. First, we have that 
\begin{align*}
\sum_{\beta\leq \alpha,\beta\neq 0} & \int_{\T^2}\partial^\beta u\cdot\nabla \partial^{\alpha-\beta}u:\partial^\alpha u\de x
\leq 2\int_{\T^2}|\nabla u||\nabla^r u|^2\de x+\sum_{1<|\beta|<r} \int_{\T^2}\partial^\beta u\cdot\nabla \partial^{\alpha-\beta}u:\partial^\alpha u\de x\\
&\leq \frac{\nu}{4}\|\nabla^{r+1} u(t,\cdot)\|_{L^2}^2+\frac{C}{\nu}\|\nabla u(t,\cdot)\|_{L^2}^2\|\nabla^r u(t,\cdot)\|_{L^2}^2 
+\frac{C}{\nu}\sum_{m=2}^{r-1}\int_{\T^2}|\nabla^m u|^2|\nabla^{r-m} u|^2\de x \\
&\leq \frac{\nu}{4}\|\nabla^{r+1} u(t,\cdot)\|_{L^2}^2+\frac{C}{\nu}\|\nabla u(t,\cdot)\|_{L^2}^2\|\nabla^r u(t,\cdot)\|_{L^2}^2
+\frac{C}{\nu}   N^{2r} e^{\frac{C \Gamma}{\sigma^2}} e^{-4\sigma t},
\end{align*}
where we have integrated by parts in the sum $1<|\beta|<r$ and we applied Theorem \ref{thm:boundhr}. On the other hand, arguing in a similar way, for the last term we have that
\begin{align*}
\sum_{\beta\leq \alpha} \int_{\T^2}\partial^\beta b\cdot\nabla \partial^{\alpha-\beta}b:\partial^\alpha u\de x&=-\sum_{\beta\leq \alpha} \int_{\T^2}\partial^\beta b\cdot \partial^{\alpha-\beta}b:\nabla \partial^\alpha u\de x\\
&\leq \frac{\nu}{4}\int_{\T^2}|\nabla^{r+1}u|^2\de x+\frac{C}{\nu}\sum_{m=0}^r\int_{\T^2}|\nabla^m b|^2(t,\cdot)|\nabla^{r-m} b(t,\cdot)|^2\de x\\
&\leq \frac{\nu}{4}\|\nabla^{r+1} u(t,\cdot)\|_{L^2}^2+\frac{C}{\nu}\sum_{m=0}^{r}\|\nabla^m b(t,\cdot)\|_{L^4}^2\|\nabla^{r-m}b(t,\cdot)\|_{L^4}^2\\
&\leq \frac{\nu}{4}\|\nabla^{r+1} u(t,\cdot)\|_{L^2}^2+\frac{C}{2\nu}\sum_{m=0}^{r}\|\nabla^m b(t,\cdot)\|_{L^2}^2
\|\nabla^{r-m+1} b(t,\cdot)\|_{L^2}^2 \\
&+\frac{C}{2\nu}\sum_{m=0}^{r}\|\nabla^{r-m} b(t,\cdot)\|_{L^2}^2 \|\nabla^{m+1} b(t,\cdot)\|_{L^2}^2\\
&\leq \frac{\nu}{4}\|\nabla^{r+1} u(t,\cdot)\|_{L^2}^2+ \frac{C}{\nu}  e^{\frac{C \Gamma}{\sigma^2}} \sum_{m=0}^{r} N^{2(r-m)} \|\nabla^{m+1} b(t,\cdot)\|_{L^2}^2e^{-2\sigma t}.
\end{align*}
Then, by Poincar\'e's inequality we obtain that
\begin{align}
\frac{\de}{\de t} \|\nabla^r u(t,\cdot)\|_{L^2}^2 & \leq \frac{C}{\nu}(\|\nabla u(t,\cdot)\|_{L^2}^2-\nu)\|\nabla^r u(t,\cdot)\|_{L^2}^2+\frac{C}{\nu}   N^{2r} e^{\frac{C \Gamma}{\sigma^2}} e^{-4\sigma t} \nonumber\\
&+\frac{C}{\nu} \sum_{m=0}^{r} N^{2(r-m)} \|\nabla^{m+1} b(t,\cdot)\|_{L^2}^2e^{-2\sigma t}
\end{align}
and an application of Gronwall's inequality and arguing as we did in the previous steps we obtain that (again we choose $\sigma = \eta/2 $)
\begin{align*}
\|\nabla^r u(t,\cdot)\|_{L^2}^2&\leq \left(\|\nabla^r u_0\|_{L^2}^2+\frac{C}{\nu} N^{2r}\int_0^t e^{(\nu-2\eta)s}\de s \right)e^{\frac{C \Gamma}{\sigma^2}} e^{-\nu t}\\
&+\left(\frac{C}{\nu}\sum_{m=0}^{r} N^{2(r-m)}\int_0^t\|\nabla^{m+1} b(s,\cdot)\|_{L^2}^2e^{(\nu-\eta )s}\de s\right)
e^{\frac{C \Gamma}{\sigma^2}} e^{-\nu t}\\
&\leq \left(\|\nabla^r u_0\|_{L^2}^2+\frac{Ce^{\nu t}e^{-2\eta t}}{\nu(\nu-2\eta)}N^{2r}+
\sum_{m=0}^r \frac{CN^{2r}}{\nu\eta}e^{(\nu-\eta )t}\right) e^{\frac{C \Gamma}{\sigma^2}} e^{-\nu t},
\end{align*}
where we used \eqref{est:HrWD} in the last inequality. Recalling $\nu > 3 \eta$ this implies \eqref{DecLargeNu} and the proof is concluded.

\end{proof}
%

\section{Magnetic reconnection in 2D (small velocity)}\label{Sec:SmallVel}

We are ready to prove the 2D case of our main Theorem \ref{thm:main}.

\subsection{Construction of the initial datum} 
For our argument, we may chose as a reference solution any of the (large frequency) Taylor fields defined Section \ref{Sec:BeltramiTaylorBis}, however, to fix the idea we focus on
$$
\mathcal V^1_{nm}  = (m\sin nx_1 \cos mx_2, -n\cos nx_1 \sin mx_2).
$$
where $n^2+m^2=N^2$ and $N$ will be taken large. We will set, with a small abuse of notation, 
$V_N = \mathcal{V}^1_{nm}$. Also, we take $\mathcal{V}_1 \in \mathbb{V}_{1}$ and we choose, among them, one of the Hamiltonian structurally stable vector field (we know that in $\mathbb{V}_{1}$ there are some). For example, we can use 
$$
V_1=\left(\sin y,\frac{1}{2}\sin x\right).
$$

We consider the initial data
$$
m_0= b_0 + h_0, \qquad b_0 :=  \frac{M}{N} V_N, \qquad  h_0 := \delta V_1,
$$
where $M >0$ (is possibly large) and $\delta >0$ will be chosen to be very small. Indeed, the goal is to show that we can choose $\delta$ small and $N$ big enough such that if $b(t,\cdot)$ is the solution of \eqref{eq:mhd} with initial datum $b_0$, then $b_0$ will have at least $8mn$ regular critical points while at some positive time $t=T>0$ the topology of the integral lines of $b(T,\cdot)$ will be the same of $V_1$; in particular $b(T,\cdot)$ will have $4$ regular regular points. Thus a change of topology of the magnetic lines happened between $t=0$ and $t=T$.

\begin{proof}[Proof of Theorem \ref{thm:main}, d=2]
We divide the proof in several steps.\\
\\
\underline{\em Step 1} \hspace{0.3cm} Auxiliary solutions of \eqref{eq:mhd}.\\
\\
Recall that $V_N$ is divergence free
and that $\Delta V_N = -N^2 V_N$. Moreover $V_N$ solves
$$
(V_N \cdot \nabla) V_N = \nabla P_{V_N},
$$
with pressure (recall $N^2 = n^2 + m^2$) 
$$
P_{V_N} = 
 m^2 (\sin (nx_1))^2 + n^2 (\sin (m x_2))^2.
$$
Thus the couple 
$$
(u, b) = (0, e^{-\eta N^2 t} V_N ),
$$
is the unique smooth solution of \eqref{eq:mhd} with data$ (0,V_N)$; the pressure is $e^{-2\eta N^2 t} P_{V_N}$.\\
\\
%
\underline{\em Step 2} \hspace{0.3cm}Integral lines at time $0$.\\
\\
Consider the rescaled datum 
$$
\frac{N}{M} m_0 = V_N + \frac{N}{M} \delta V_1.$$
Taking 
\begin{equation}\label{VeryExplCondIP}
 \delta < c \frac{M}{N^{L+1}},
 \end{equation}
where $c$ is a suitable small constant, we have that $\frac{N}{M} \delta \ll \delta_{0}(N)$ from Lemma \ref{Lemma:CriticalPoints}, thus the vector field $\frac{N}{M} m_0$, and so $m_0$, has at least $8nm$ regular critical points.\\
\\
\underline{\em Step 3} \hspace{0.3cm}Evolution under the MHD flow.\\
\\
We denote with $(v,h)$ the solution of the difference equation \eqref{eq:difference}, with initial datum 
$$
(0, h_0), \qquad  h_0 := \delta V_1.
$$ 
Thus, under our choices, the reference solution is $(u, b) = (0, \frac{M}{N} e^{-\eta N^2 t} V_N)$ and the solution with initial datum 
$$
(0, m_0), \qquad m_0 := \frac{M}{N} V_N + h_0, \qquad  b_0 := \frac{M}{N} V_N, \quad h_0 := \delta V_1,
$$ 
is denoted by $(w, m)$.
Using the Duhamel formula we can represent 
\begin{equation}\label{MevolutionFinal}
m(t,\cdot)=\frac{M}{N} e^{-\eta N^2 t} V_N + \delta e^{-\eta t} V_1+ D(t,\cdot),
\end{equation}
where 
$$
D(t,\cdot) := L_h(t,\cdot) + L_b(t,\cdot)
$$
and we recall that $L_h, L_b$ introduced in \eqref{Def:Lh}-\eqref{Def:Lb} are defined as
\begin{equation}\nonumber
L_h(t,\cdot):=\int_0^t e^{\eta(t-s)\Delta}\dive \big(h(s)\otimes v(s)-v(s)\otimes h(s)\big)\,\de s,
\end{equation}
\begin{equation}\nonumber 
L_b(t,\cdot):=\int_0^t e^{\eta(t-s)\Delta}\dive \big(b(s)\otimes v(s)-v(s)\otimes b(s)\big)\,\de s.
\end{equation}
Since  (the constant $C$ here depends on $M$ and $r$)
$$
\| b_0 \|_{H^{m}} \leq C \| V_N\|_{H^{m}} \leq C N^{m}, \qquad
\|  h_0 \|_{H^{m}} =  \delta \|  V_1 \|_{H^{m}} \leq C \delta,   
$$
by Theorem \ref{thm:boundhrStab} we get that
\begin{align}
\|v(t,\cdot)\|_{H^m}+\|h(t,\cdot)\|_{H^m}&\leq \delta N^{m} e^{-\sigma t} e^{\frac{C \tilde\Gamma}{\sigma^2}};
\end{align}
for $m= 0, \ldots, r$.

Then, by using the above formula, we estimate the $H^r$ norms of the tensorial products in $L_h,L_b$ as follows
\begin{align}
\| h(s)\otimes v(s)\|_{H^{r+1}}&\leq \| h(s)\|_{L^\infty} \| v(s)\|_{H^{r+1}}+ \| v(s)\|_{L^\infty}\| h(s)\|_{H^{r+1}} \nonumber\\
&\leq \| h(s)\|_{H^2} \| v(s)\|_{H^{r+1}}+ \| v(s)\|_{H^2}\| h(s)\|_{H^{r+1}}\nonumber\\
&\leq C \delta^2 N^{r+3} e^{-\sigma s},\\
\| b(s)\otimes v(s)\|_{L^2}&\leq \|b(s)\|_{L^\infty}\|v(s)\|_{L^2}\leq C \delta e^{-\eta N^2 s},\\
\| b(s)\otimes v(s)\|_{H^{r+1}}&\leq C\| v(s)\|_{L^\infty} \|b(s)\|_{H^{r+1}}+ C\| v(s)\|_{H^{r+1}}\|b(s)\|_{L^\infty}\nonumber\\
&\leq C \| v(s)\|_{H^2} \|b(s)\|_{H^{r+1}}+ C\| v(s)\|_{H^{r+1}}\|b(s)\|_{L^\infty}\nonumber\\
&\leq C e^{-\eta N^2 s}\big(\delta N^{2}  +\delta  N^{r+1}\big) \nonumber \\
&\leq Ce^{-\eta N^2 s}\delta  N^{r+1}.
\end{align}
Using \eqref{UT1Fin} we estimate 
\begin{align}\label{Comb1}
\|L_h(t,\cdot)\|_{H^r}&\leq C\int_0^t \|e^{\eta(t-s)\Delta} \big(h(s)\otimes v(s)\big)\|_{H^{r+1}}\de s\nonumber\\
&\leq C\int_0^t e^{-\eta(t-s)} \|h(s)\otimes v(s)\|_{H^{r+1}}\de s\nonumber\\
& \leq  C \delta^2 N^{r+3} e^{-\sigma s} \int_0^t e^{-\eta(t-s)} e^{-\eta\sigma s}\de s\nonumber\\
&\leq C \delta^2 N^{r+3} e^{-\sigma s}.
\end{align}
Using \eqref{UTFBis} we estimate 
\begin{align}\label{Comb2}
\|L_b(t,\cdot)\|_{H^r}&\leq C\int_0^{t/2} \|e^{\eta(t-s)\Delta}\big(b(s)\otimes v(s)\big)\|_{H^{r+1}}\de s + C\int_{t/2}^t \|e^{\eta(t-s)\Delta}\big(b(s)\otimes v(s)\big)\|_{H^{r+1}}\de s\nonumber\\
&\leq C\int_0^{t/2} (t-s)^{-\frac{r+1}{2}} \|b(s)\otimes v(s)\|_{L^2}\de s + C\int_{t/2}^t e^{-\eta(t-s)} \|b(s)\otimes v(s)\|_{H^{r+1}}\de s\nonumber\\
&\leq C \delta\int_0^{t/2} (t-s)^{-\frac{r+1}{2}} e^{-\eta N^2 s}\de s + C\delta  N^2 \int_{t/2}^t e^{-\eta(t-s)}e^{-\eta N^2 s}\de s\nonumber\\
&\leq C\delta N^{-2}+ C\delta  N^{r+1} e^{- \eta N^2 \frac{t}{2}}.
\end{align}
\\
\\
\underline{\em Step 3} \hspace{0.3cm} Choice of the parameters.\\
\\
In this step we fix the parameters $N, \delta$. Recall that $\delta$ must satisfy \eqref{VeryExplCondIP}, so that $m(0,\cdot)$ has at least $8nm$ regular critical points. Then we consider the behavior of the fluid at time $t=T$. We rescale the magnetic field as
\begin{equation}
\delta^{-1}e^{\eta T} m(T,\cdot),
\end{equation}
and then recalling \eqref{MevolutionFinal} we get 
\begin{align*}
\delta^{-1} e^{\eta T} m(T,\cdot)
 =  V_1 +  \delta^{-1}  \frac{M}{N} e^{-\eta (N^2 - 1) T} V_N  + \delta^{-1} e^{\eta T} D(t,\cdot), 
\end{align*}
Our goal is to choose $N$ so large that  
\begin{equation}\label{FinalTTC}
\left\| \delta^{-1} e^{\eta T} m(T,\cdot) - V_1 \right\|_{H^r} \ll 1,
\end{equation}
for a sufficiently large\footnote{It suffices $r=3$ in order to control the norm $C^1(\T^2)$.} $r$, so that using the structural stability of $V_1$ under $C^1$ perturbations (see Section \ref{Sec:BeltramiTaylorBis}) and Sobolev embedding one can show that the set of the integral lines of $\delta^{-1} e^{\eta T} m(T,\cdot)$, and thus of $m(T,\cdot)$, is diffeomorphic to that of $V_1$. In particular $m(T,\cdot)$ has only $4$ critical points and we must have had magnetic reconnection between $t=0$ and $t=T$.

It remains to prove \eqref{FinalTTC}. We choose  
$$
\delta =  Me^{- \eta T} N^{-(L+1)}, \quad L \geq r+3,
$$  
for some sufficiently large $L$.
This is compatible with \eqref{VeryExplCondIP}
 and then combining \eqref{Comb1}-\eqref{Comb2} we get 
\begin{align}\nonumber
\| \delta^{-1} e^{\eta T} D(t,\cdot) \|_{H^r} & \leq
C \delta^{-1} e^{\eta T} ( \delta^2 N^{r+3} + \delta N^{-2}+ C\delta  N^{r+1} e^{- \eta N^2 \frac{T}{2}})
\\ \nonumber
&\leq C e^{\eta T} \delta N^{r+3} + \frac{C e^{\eta T}}{N^2} + C N^{r+1} e^{- \eta N^2 \frac{T}{2}}
\\ \nonumber
&\leq \frac{C}{N} + \frac{C e^{\eta T}}{N^2} + C N^{r+1} e^{- \eta N^2 \frac{T}{2}} \ll 1,
\end{align}
where we have taken $N$ sufficiently large, depending on $T, \eta$ (in particular $N$ proportional to $\eta^{-1/2}$).
Under this choice of $\delta$ and $N$ we also have
$$
\| \delta^{-1}  \frac{M}{N} e^{-\eta (N^2 - 1) T} V_N \|_{H^r} \leq 
C N^{-r-5}e^{-\eta (N^2 - 1) T} \ll 1,
$$
that concludes the proof.
\end{proof}

We conclude the section with some remarks

\begin{rem}
\label{Remark:LargeDataWithStructure2D1}
The choice $u_0=0$ simplifies the proof but we may easily generalize the argument to small velocities, 
namely taking $\| u_0 \|_{H^r} = \varepsilon$ (for a sufficiently large $r$), 
where the size of the small parameter $\e$ depends on all the relevant parameters we introduced in the proof. As in the 3D case (see Remark \ref{Remark:LargeDataWithStructure1}) we can consider large data $u_0$ (and $\varepsilon$-perturbations of them)  introducing some extra structure. Moreover, in the 2D case we can also consider 
\emph{generic} large velocities as long as we take the viscosity $\nu$ sufficiently large. This result is proved in the next section. 
\end{rem}
\begin{rem}\label{Remark:LargeDataWithStructure2D2}
The result requires $\eta>0$ and we cannot promote it to the zero resistivity limit $\eta \to 0$; see Remark 
\ref{Remark:LargeDataWithStructure2}. 
\end{rem}
\begin{rem}
 The estimates do not blow up in the vanishing viscosity limit $\nu\to 0$ and one could in principle prove 
a reconnection statement for $\nu =0$. 
\end{rem}
\begin{rem}\label{Rem:FailsSStab}
As explained in the introduction, we can obtain a 3D reconnection result 
by the 2D result, extending the 2D magnetic field to a 3D object in the natural way (see \eqref{AsInHere}). 
However, the
3D reconnection result obtained in this way would not be {\it structurally stable} (in the sense of Remark \ref{Rem:StructStable}),
since we would need some structural stability of (lines of) degenerate
critical points (under 3D perturbations), which is in general false. Indeed, we 
can immediately recognize that the argument used 
in Lemma \ref{Lemma:CriticalPoints} fails for degenerate critical points.  
\end{rem}

\section{Magnetic reconnection in 2D (large velocity)}\label{Sec:Proof2DLarge}

We are now ready to show how to prove magnetic reconnection for a generic velocity in $H^4(\T^2)$ but taking the 
viscosity $\nu$ very large.

\begin{proof}[Proof of Theorem \ref{thm:main2}]
We consider initial data $(u_0, b_0)$ with $u_0 \in H^4(\T^2)$ and we consider the initial data
$$
b_0 = \frac{M}{N} V_N + \delta  V_1,
$$
where $M >0$ (is possibly large) and $\delta >0$ will be small (only depending on $N,M$) and $N$ large (depending on $\eta, T$) so that we have, as in the proof above, that $b_0$ has at least $4nm$ (regular) critical points.  More precisely we fix (recall \eqref{VeryExplCondIP})
\begin{equation}\label{FixDEltaT=0}
\delta = c \frac{M}{N^{L+1}},
\end{equation}
for some small constant $c>0$.

Let then $T>0$. Since (see again the previous section)
$$
e^{\eta T\Delta}\left(\frac{M}{N} V_N + \delta  V_1\right) = \frac{M}{N} e^{-\eta N^2 t} V_N + \delta e^{-\eta t} V_1,
$$
we can represent  $b(\cdot, T)$ using the Duhamel formula 
\begin{equation}
b(\cdot, T)=
\frac{M}{N} e^{-\eta N^2 T} V_N + \delta e^{-\eta T} V_1+ D(t) .
\end{equation}
where now
$$
D(t) := \int_0^T e^{\eta (T-s)\Delta}\dive\left(b(s,\cdot)\otimes u(s,\cdot)-u(s,\cdot)\otimes b(s,\cdot)\right) \de s,
$$
Thus we rescale $e^{\eta T}\delta^{-1}b(T,\cdot)$ and we must prove 
$$
\|  e^{\eta T}\delta^{-1}b(T,\cdot) - V_1 \|_{H^r} \ll 1,
$$
for a sufficiently large $r$, so that by Sobolev embedding and the structural stability of $V_1$ under $C^1$
perturbations (see Section \ref{Sec:BeltramiTaylorBis}) we know that $b(T,\cdot)$ (as the rescaled field) has only $4$ (regular) critical points. Since 
$$
\| e^{\eta T}\delta^{-1}b(T,\cdot) - V_1 \|_{H^r} =\|
e^{\eta T}\delta^{-1} \frac{M}{N} e^{-\eta N^2 T} V_N + e^{\eta T}\delta^{-1} D(t) \|_{H^r}
$$
we only need to bound the Sobolev norms of these two terms in a suitable way. If 
$$
R := \| u_0 \|_{H^{4}},
$$
then by \eqref{DecLargeNu} we have, after taking $\nu$ sufficiently large compared to $T$
$$
\|u(\cdot, t)\|_{H^4} \leq e^{\frac{C}{\eta^2}(R^2 + N^2 +1)} R^2 \frac{e^{-\eta t}}{\nu \eta}. 
$$
Thus
\begin{align}\nonumber
\| e^{\eta T}\delta^{-1} D(t) \|_{H^3} & \leq
C e^{\eta T}\delta^{-1}  
\int_0^T e^{-\eta (T-s)}\|b(s,\cdot)\|_{H^{4}}\|u(s,\cdot)\|_{H^{4}} \de s
\\ \nonumber
& \leq C \delta^{-1}  
\int_0^T e^{\eta s} \|b(s,\cdot)\|_{H^{4}}\|u(s,\cdot)\|_{H^{4}} \de s
\\ \nonumber
& \leq \frac{e^{C(R^2 + N^2 +1)} R^2}{\delta \nu \eta} 
\int_0^T  \|b(s,\cdot)\|_{H^{4}} \de s 
\leq \frac{e^{C(R^2 + N^2 +1)} R^2N^{2r}}{\delta \nu \eta \sigma} 
\ll 1
\end{align}
where we used \eqref{est:HrWD} in the penultimate inequality and we have then taken again $\nu$ sufficiently large (compared to all the fixed parameters).

Also we have 
$$
\left\| e^{\eta T} \delta^{-1} \frac{M}{N} e^{-\eta N^2 T} V_N  \right\|_{H^3}
\leq C \delta^{-1} MN^2 e^{-\eta  T (N^2 - 1)} \ll 1,
$$
where, recalling the definition \eqref{FixDEltaT=0} of $\delta$, the last inequality follows choosing $N$ sufficiently large. This concludes the proof.
\end{proof}

\begin{rem}
The assumption $u_0 \in H^4(\T^2)$ can be improved, however our goal was simply to provide examples 
in which we can consider large velocities at the initial time $t=0$.
\end{rem}

Remarks \ref{Remark:LargeDataWithStructure2D2} and \ref{Rem:FailsSStab} applies here too.

\section{Instantaneous reconnection}\label{Sec:Instant}

The aim of this section is to exhibit an example of instantaneous magnetic reconnection, proving 
Theorem \ref{thm:main3}. The 3D argument is exactly the same used in \cite{ELP} to prove instantaneous vortex reconnection 
for Navier--Stokes, so we will only sketch it for the sake of completeness. The 2D case requires some new ideas, but
it is in some extent simpler. Indeed, while the 3D argument relies upon the subharmonic Melnikov theory developed by  Guckenheimer and Holmes in \cite{GH}, in 2D one can use the more classical homoclinic/heteroclinic Melnikov method, in the heteroclinic formulation of Bertozzi \cite[pag. 1276]{B}. Moreover, a significant simplification of the argument is possible, based on the stream function formulation (we are grateful to Daniel Peralta-Salas for this observation), and we present it below. 
 
\begin{proof}[Proof of Theorem \ref{thm:main3}, d=3] 

We consider initial data $(u_0, b_0)$
with $b_0$ given by
\begin{equation}\label{TaylorT=0}
b_0 := M \left( \sin(x_3 + \varepsilon h), \cos(x_3 + \varepsilon h), - \varepsilon (\partial_{x_1} h)
\sin(x_3 + \varepsilon h) - ( \varepsilon \partial_{x_2} h ) \cos(x_3 + \varepsilon h)  \right) 
\end{equation}
where $h = h(x_1, x_2)$ is a 2D periodic function and $\varepsilon$ is a small parameter, 
both to be fixed later.
Note that $b_0$ is zero-average and divergence-free. Since $u_0$ may be large, we only have local solutions. 
However, this is acceptable for our purposes since we aim to prove instantaneous reconnection of some of the 
magnetic lines of $b_0$. Noting that $b_0$ is the pullback of the Beltrami field
$$
M \left( \sin x_3, \cos x_3 , 0 \right)
$$ 
under the volume preserving diffeomorphism
\begin{equation}\label{AgainB}
\Phi (x) =  (x_1, x_2, x_3 + \varepsilon h(x)),
\end{equation}
 we see that the integral lines of $b_0$ are periodic or quasi-periodic 
 and they form families of invariant  tori, like that one of \eqref{AgainB}. 
 We expand in Taylor series finding for small times $t > 0$:  
 \begin{equation}\label{FirstTaylorInTime}
 B(t) = b_0 + t( \eta \Delta b_0 - (u_0 \cdot \nabla ) b_0 + (b_0 \cdot \nabla) u_0 ) + \mathcal{O} (t^2).
 \end{equation}
 The goal is then to show that the Melnikov function associated to one of the periodic orbit (living on one of the resonant tori)
has a non-simple zero. The computation of the Melnikov function only depends on the linear part of the PDE above, 
since in the non resistive case $\eta =0$ the magnetic lines are transported, thus the Melnikov function must be identically 
zero. Once we have noted this fact, we can proceed exactly as in the proof of Theorem 1.4 of \cite{ELP} to show that, choosing
$h(x_1, x_2) = \cos(p X_1 - q x_2), $
where $p/q$ is rational, for all $\varepsilon >0$ sufficiently small and
for all sufficiently small times $t > 0$ the resonant torus corresponding to 
$\cot X_3 = p/q$ is instantaneously broken.


\end{proof}

\begin{rem}
By some standard PDEs considerations and invoking a slightly more general version of the stability Theorem \ref{thm:stab}
(which takes into account small initial velocities rather than zero ones) one can show that if we restrict to
small (say) $H^1$ velocities the (strong) solutions constructed in the previous proof 
are indeed {\it global}.
\end{rem}

We conclude the paper with the proof of the 2D instantaneous reconnection result.

\begin{proof}[Proof of Theorem \ref{thm:main3}, d=2] 
Let us consider the following vector field
\begin{equation}\label{eq:perturbed_initdat}
b_0^\e= M(-\sin(x_1)\sin(x_2 - \e x_1 ), - \e \sin(x_1)\sin(x_2 - \e x_1 )-\cos(x_1)\cos(x_2 - \e x_1).
\end{equation}
Note that $\dive b_0^\e=0$. Note also 
that for $\e=0$ the vector field $b_0^\e$ corresponds to $-\mathcal V_{11}^4$ and we denote it by $b_0$. The integral lines of $b_0$ are the solutions of 
\begin{equation}\label{eq:ode-unp}
\begin{cases}
\dot{x_1}=- M\sin(x_1)\sin(x_2),\\
\dot{x_2}=- M\cos(x_1)\cos(x_2),
\end{cases}
\end{equation}
where the dot denotes the derivative with respect to the parametrization. 
The integral lines (of the system above) presents heteroclinic orbits connecting four saddle points. 
This is a peculiarity shared by the phase diagram of the Taylor fields. 
Note that, if we would consider $u_0=0$, the solution of $\eqref{eq:mhd}$ would be 
given by $(u,b)=(0,-Me^{-2t}b_0(x))$: obviously the magnetic field $b$ is topologically equivalent to 
$b_0$ (that is $-\mathcal V_{11}^4$ from Section \ref{Subsec:TaylorV}) and there would be no reconnection at any time.
However, if we perturb $b_0$ as in \eqref{eq:perturbed_initdat} we are able to 
provide an example of instantaneous change of the topology of the integral lines. 

We start noting that
$b_0^\e$ is given by the pull-back of $b_0$
$$
b_0^\e=\Phi^*b_0,
$$
under 
 the change of variables
\begin{equation}\label{def:phi}
\Phi(x_1,x_2)=(x_1,x_2  - \e x_1).
\end{equation}
It is clear that $\Phi$ is a volume preserving diffeomorphism, 
thus, in particular, the integral lines of $b_0^\e$ are topologically equivalent to that 
of~$b_0$. However, $b_0^\e$ is not a Taylor field (in particular it is not an eigenvector of the Laplacian), and its structure can change during the evolution of the fluid. 

We consider a smooth vector field 
$u_0$ with zero divergence. The system \eqref{eq:mhd} with initial datum $(u_0,b_0^\e)$ admits a
global smooth solution that we denote by $(u^\e,b^\e)$. 
By using the equation, we Taylor expand $b^\e$ with respect to time obtaining that
\begin{equation}\label{TaylorInTimeSol}
b^\e(t,x)=b_0^\e+t(\eta\Delta b_0^\e+(b_0^\e\cdot\nabla) u_0-(u_0\cdot\nabla)b_0^\e) + \mathcal{O}(t^2),
\end{equation}
here $t$ has to be seen as the perturbative parameter. Of course, this representation holds for short times, that is 
a harmless restriction since we want to prove an instantaneous reconnection result.

Since $\dive b^\e(t, \cdot)=0$ (and the average on $\T^2$ is zero as well)
there exists a stream function (or Hamiltonian) $\psi^\e(t,  \cdot)$, namely a scalar function such that
$$
b^\e(t, x)=\nabla^\perp \psi^\e(t, x).
$$
We denote with $\psi_0^\e$ the stream function at time $t=0$ and recalling the definition \eqref{eq:perturbed_initdat} of $b_0^\e$ it is easy to verify that 
$$
\psi_0^\e(x_1,x_2)=-\sin x_1 \cos(x_2-\e x_1).
$$
To fix the ideas, we consider the saddle connection given by 
$$
x_2=\e x_1+\frac{\pi}{2},
$$
with $x_1\in[0,\pi]$. This is an heteroclinic orbit connecting the 
saddle points $A=(0,\frac{\pi}{2})$ and $B=(\pi, \e\pi+\frac{\pi}{2})$. 
Thus for the stream function we have 
$\psi_0^\e(A)=\psi_0^\e(B)$.

Taking this into account, it is
easy to show that, since the Hamiltonian is constant along heteroclinic orbits, the connection is (instantly) broken if we can prove that
\begin{equation}\label{PartialInt=0}
(\partial_{t} \psi^\e)(0, A) 
\neq 
(\partial_{t} \psi^\e)(0, B).
\end{equation}

Thus it remains to verify this condition. 
From \eqref{TaylorInTimeSol} and the second equation in \eqref{eq:mhd}, we deduce that $\psi^\e$ verifies 
\begin{equation}\label{TaylorInTimePsi}
\psi^\e(t,x)=\psi_0^\e +t( \eta \Delta \psi_0^\e + (u_0 \cdot \nabla) \psi_0^\e )+ \mathcal{O}(t^2).
\end{equation}
In order to prove \eqref{PartialInt=0} we only need to take into account the contribution of 
$t \eta \Delta \psi_0^\e$ to it. Indeed, the contributions of the term
$ t (u_0 \cdot \nabla) \psi_0^\e$ 
cancel out 
because they exactly correspond to what we would have in the case $\eta=0$, when we know that the
magnetic lines are transported by the fluid (Alfven's theorem), so that we would have an 
equality in (the analogous of) \eqref{PartialInt=0}. 
Then, by a direct computation we obtain that
$$
\Delta\psi_0^\e(A)=-2\e
$$
$$
\Delta\psi_0^\e(B)=
2\e \cos (\e \pi),
$$
and in particular $\Delta\psi_0^\e(A)\neq\Delta\psi_0^\e(B)$ for $\e \in (0, 1)$,
that concludes the proof. 

\end{proof}

\begin{rem}
The previous arguments cannot be promoted to the zero resistivity limit $\eta \to 0$, since they work in 
a regime where $\eta \gg t$. Indeed otherwise we could not consider the $\mathcal{O}(t^2)$ terms 
in~\eqref{FirstTaylorInTime}-\eqref{TaylorInTimePsi} as higher order perturbations of the leading 
terms $t(\eta\Delta b_0^\e+(b_0^\e\cdot\nabla) u_0-(u_0\cdot\nabla)b_0^\e)$ and $t( \eta \Delta \psi_0^\e + (u_0 \cdot \nabla) \psi_0^\e )$. Thus, as~$\eta \to 0$,
also the range of times $t >0$ for which we have proved a change of topology (w.r.t. the initial time~$t=0$)
shrinks to zero. 
 \end{rem}


\section*{Acknowledgements}
The authors are grateful to Daniel Peralta-Salas for his comments and for suggesting how to simplify the proof of the Theorem \ref{thm:main3} in dimension $2$. This research has been supported by the Basque Government under program BCAM- BERC 2022-2025 and by the Spanish Ministry of Science, Innovation and Universities under the BCAM Severo Ochoa accreditation SEV-2017-0718 and by the projects PGC2018-094528-B-I00, PID2021-123034NB-I00 and PID2021-122156NB-I00. GC is also supported by the ERC Starting Grant 101039762 HamDyWWa. RL is also supported by the Ramon y Cajal fellowship RYC2021-031981-I.

\end{document}